\documentclass[11pt]{article}
\usepackage{graphicx}%
\usepackage{multirow}%
\usepackage{amsmath,amssymb,amsfonts}%
\usepackage{amsthm}%
\usepackage{mathrsfs}%
\usepackage[title]{appendix}%
\usepackage{xcolor}%
\usepackage{textcomp}%
\usepackage{manyfoot}%
\usepackage{booktabs}%
\usepackage{algorithm}%
\usepackage{algorithmicx}%
\usepackage{algpseudocode}
\usepackage{soul}%
\usepackage{listings}%
\usepackage{ulem}

\usepackage{float}
\usepackage{geometry,amsmath}
\usepackage{enumerate}
\usepackage{dsfont}
\usepackage{hyperref}
\usepackage{natbib}
\usepackage{color}
\usepackage{epsfig}
\usepackage{diagbox}
\usepackage{dsfont}
\usepackage{bbm,bm,color}
\usepackage{multirow,multicol}
\usepackage{tikz,tabularx}
\usepackage{caption}
\usepackage{subcaption}
\usepackage{enumitem}

 \usepackage{setspace}
 \hypersetup{hidelinks}

\newcommand{\bgeqn}{\begin{eqnarray}}
\newcommand{\edeqn}{\end{eqnarray}}
\newcommand{\bgeq}{\begin{eqnarray*}}
\newcommand{\edeq}{\end{eqnarray*}}

\newtheoremstyle{propositionstyle} 
  {\topsep}{}{\itshape}{}{\bfseries}{}{ }{\thmname{#1}~\thmnumber{#2}.\thmnote{(#3)~}}

\newcommand{\dd}{\textsf {d\kern -0.07em l}} 

\newcommand{\diam}{\operatorname{diam}}

\newcommand{\UU}{\mathcal{U}}
\newcommand{\XX}{\mathcal{X}}
\newcommand{\PP}{\mathcal{P}}
\newcommand{\e}{\bm e}

\newcommand{\ubar}{\overline{x}}
\newcommand{\ulow}{\underline{x}}

\newcommand{\Nk}{N_k}
\newcommand{\hk}{h_k}
\newcommand{\Pik}{\Pi_k}
\newcommand{\Tk}{T_k}
\newcommand{\Uk}{\UU^k}
\newcommand{\Ukp}{\UU^{k,+}}

\newcommand{\Pkp}{\PP^{k,+}}

\newcommand{\B}{\mathcal{B}}

\newtheorem{proposition}{Proposition}[section]
\newtheorem{example}{Example}[section]%
\newtheorem{remark}{Remark}[section]%
\newtheorem{corollary}{Corollary}[section]%
\newtheorem{definition}{Definition}[section]%
\newtheorem{assumption}{Assumption}%
\newtheorem{lemma}{Lemma}[section]%
\newtheorem{theorem}{Theorem}[section]%

\newcommand{\vv}{{\bm{v}}}
\newcommand{\x}{{\bm{x}}}
\newcommand{\R}{{\mathbb{R}}}

\newcommand{\p}{{\bm{p}}}
\newcommand{\q}{{\bm{q}}}
\newcommand{\bal}{{\bm{\alpha}}}
\newcommand{\bbe}{{\bm{\beta}}}

\newcommand{\g}{{\bm{g}}}
\newcommand{\G}{{\bm{G}}}
\newcommand{\rr}{{\bm{r}}}
\newcommand{\RR}{{\bm{R}}}
\newcommand{\cc}{{\bm{c}}}

\newcommand{\inmat}[1]{\mbox{\rm {#1}}}

\makeatletter
  \newenvironment{breakablealgorithm}
  {%
    \par\vspace{1ex}%
    \refstepcounter{algorithm}
    \begingroup
      \renewcommand{\caption}[2][\relax]{%
        \noindent\hrule height .8pt\relax\par
        \vspace{2pt}%
        {\noindent\textbf{\ALG@name~\thealgorithm}\ ##2\par}%
        \ifx\relax##1\relax
          \addcontentsline{loa}{algorithm}{%
            \protect\numberline{\thealgorithm}##2}%
        \else
          \addcontentsline{loa}{algorithm}{%
            \protect\numberline{\thealgorithm}##1}%
        \fi
        \vspace{2pt}%
        \noindent\hrule height .4pt\relax\par
        \vspace{4pt}%
      }%
  }{%
    \vspace{4pt}%
    \noindent\hrule height .4pt\relax\par
    \vspace{1ex}%
    \endgroup
  }
\makeatother

\AtBeginDocument{%
  \setlength{\abovedisplayskip}{6pt plus 2pt minus 2pt}
  \setlength{\belowdisplayskip}{6pt plus 2pt minus 2pt}
  \setlength{\abovedisplayshortskip}{3pt plus 1pt minus 1pt}
  \setlength{\belowdisplayshortskip}{5pt plus 1pt minus 1pt}
  \setlength{\jot}{2pt}
}
\begin{document}
\title{Coordinate-wise Polyhedral Method for Eliciting Multivariate Linear Utility and Univariate Nonlinear Utility Functions 
}

\author{Jiaxin Wei\thanks{School of Mathematics and Statistics, Xi’an Jiaotong University, Xi’an, Shaanxi, P. R. China, \texttt{weijx0609@stu.xjtu.edu.cn}}, 
Jia Liu\thanks{School of Mathematics and Statistics, Xi’an Jiaotong University, Xi’an, Shaanxi, P. R. China, \texttt{jialiu@xjtu.edu.cn}}\; and Huifu Xu\thanks{Department of Systems Engineering and Engineering Management, The Chinese University of Hong Kong, Hong Kong, \texttt{h.xu@cuhk.edu.hk}}}







\maketitle

\begin{abstract}

Preference elicitation is an important topic in behavioral economics and decision analytics. The well-known polyhedral method provides an efficient approach to eliciting decision makers’ preferences in discrete choice models. However, 
the question of whether the associated algorithmic procedure is guaranteed to generate a sequence of polyhedra that converges to a singleton remains open.
In this paper, we propose a coordinate-wise polyhedral method (CPM) for cutting polyhedra
with theoretical guarantees of convergence. Unlike the existing polyhedral method, which designs pairwise comparison queries by solving coupled optimization problems and performs cuts subsequently, CPM specifies coordinate-wise cuts in advance and then designs corresponding pairwise comparison queries by solving a linear system of equations.
Under this framework, we show that CPM reduces the diameter of the polyhedron 
at a linear convergence rate. 
{\color{black} Moreover, we extend CPM to a univariate piecewise-linear utility function by 
representing 
it with its increment vector over
consecutive breakpoints. We 
show that the Kantorovich distance between the true utility function and 
the estimated one 
obtained by CPM decreases 
at a linear convergence rate. 
Further,
we extend the CPM to general nondecreasing Lipschitz 
continuous utility functions by piecewise-linear approximation (PLA). 
We introduce an 
adaptive-breakpoint strategy to avoid direction errors caused by PLA.
We prove that the 
set of piecewise-linear utility functions
corresponding to the ambiguity set of increment vectors
converges to the true utility function as the number of queries increases, 
and derive an explicit bound for the approximation.
}
Finally, to evaluate the performance of CPM, we conduct a series of numerical experiments. 
The results demonstrate 
comparable convergence 
to the standard polyhedral method 
in the case of linear multivariate utility functions. 
For nonlinear univariate utility functions, CPM achieves 
stable convergence 
to the true utility function, in line with the theoretical findings.

\end{abstract}

\noindent
\textbf{Keywords.} 
Coordinate-wise polyhedral method, multivariate linear utility function, univariate nonlinear utility function, piecewise-linear approximation, error bound



\section{Introduction}

Understanding and identifying individual preferences is a central theme in behavioral economics, decision analytics, marketing, and more recently AI-driven recommendation systems. Over recent
decades, 
many methods have been proposed 
to elicit preferences, each tailored to the structure of the problem and the nature of the functions representing preferences.

In the discrete choice models commonly used in marketing to capture customers' preferences, linear multivariate parametric utility functions are often employed to represent customers' evaluation of products. The goal of preference elicitation in this context is to identify the true parameter values that characterize a customer's evaluation of product features. 
This is typically carried out through pairwise comparison questionnaires (known as choice-based conjoint analysis),  where customers are asked to choose between two 
options (profiles). Each response incrementally reduces the uncertainty (or ambiguity) in the underlying parameter. 
There are two popular approaches to reducing this ambiguity.
One is the polyhedral method. This approach begins with a polyhedron that is known to contain the true value of the parameter. After observing the customer's response to a 
purposely designed question, 
the method 
constructs a separating 
hyperplane that 
cuts 
the 
polyhedron in approximately half, 
thereby 
reducing the set of plausible parameter values rapidly. A prerequisite 
for the approach is that there is no response error in the preference elicitation process, 
see \citet{toubia2003fast,toubia2004polyhedral} 
for details. 

The other is the Bayesian method which
complements the polyhedral method
by dealing with response errors.
This approach starts with a prior distribution over the parameter space, representing existing (possibly partial) knowledge. 
As the customer responds to a series of pairwise comparison questions, the method updates the distribution using Bayes' rule, resulting in a posterior distribution that reflects the updated belief about the parameter. 
These questions are designed to minimize the expected intermediate D-error defined as
a function of the determinant of the covariance matrix
of the estimates, which can be interpreted as a function
of the volume of a credibility region or confidence set
around the estimate,
see 
e.g.~\citet{toubia2007probabilistic,toubia2013dynamic,yu2012comparison,saure2019ellipsoidal} and references therein.
There are also some other optimization-based or statistical  approaches.
\citet{bertsimas2013learning} propose a robust optimization-based methodology that integrates behavioral insights such as loss aversion and response inconsistency, using adaptive questionnaires and integer programming to update individual preferences dynamically. 
\citet{vayanos2020robust} develop an adaptive robust optimization method to elicit preferences strategically using pairwise comparisons, enabling high-quality decision recommendations in high-stakes settings under preference uncertainty.
\citet{chen2026eliciting} use maximum likelihood estimation to elicit a non‑parametric univariate utility function with response error, while incorporating known structural preference information (monotonicity, concavity, Lipschitz continuity) as linear constraints on the utility function.
\citet{wei2025personalized} estimate 
the partworth vector directly through optimization based on responses instead of cutting the ambiguity set for partworth. They preserve information from previous queries by incorporating it into the objective function.

In the literature on decision analytics and behavioral economics, considerable effort has been devoted to eliciting a decision maker's (DM's) univariate utility function (which is typically nonlinear). Among various heuristic approaches, the random utility split (RUS) and random relative utility split (RRUS) methods have been proposed to reduce the ambiguity in utility estimates through structured pairwise comparisons.
In the RUS method, one lottery is constructed with two fixed outcomes typically corresponding to the worst and best possible returns of the reward function in a given decision-making problem, see \cite{wakker1996eliciting}.
The other lottery features a random outcome, uniformly drawn from the range between the worst and best outcomes. By asking the DM to choose between these two lotteries, the method aims to halve the range of admissible utility functions in the ambiguity set at the point of comparison.
The RRUS method modifies RUS by introducing randomness in a different way, where the first lottery has two outcomes selected randomly between the worst and best possible values and the second is a deterministic outcome, located at the midpoint of the two outcomes in the first lottery. This approach also reduces ambiguity in utility estimation but focuses on relative comparisons rather than absolute extremes, see 
\citet{armbruster2015decision,guo2024utility}.
\citet{zhang2025modified} propose a modified 
polyhedral method  which extends the existing polyhedral method 
 to general nonlinear univariate utility functions.
A key step in their approach is to use the piecewise-linear approximation (PLA) 
of a general nonlinear utility where the PLA is parameterized by the vector of 
increments over two consecutive breakpoints, and then represents the set of  
feasible utility functions by 
a polyhedron of vectors of increments in a Euclidean space. Since the PLAs are linear with respect to the vector of increments, 
a modified polyhedral method is proposed in the space of the vectors of increments subsequently.

While Bayesian methods display convergence in probability, the deterministic polyhedral methods and their modifications 
do not provide a 
qualitative or quantitative analysis of the convergence of the
elicitation process,
i.e.,
whether and how fast the ambiguity set is reduced to a singleton as the number of queries increases. 
This limitation is 
fundamentally related to the lack of principled query-generation mechanisms that explicitly and measurably control the reduction of the ambiguity set.
In this paper, we address this issue
by proposing a coordinate-wise polyhedral method (CPM) for both linear parametric multivariate utility functions and nonlinear univariate utility functions.
The new method follows the basic framework of the polyhedral method, but differs from it in two important aspects: the construction of polyhedral cuts and the design of queries. {Specifically, we replace the analytic center with the center of the minimum enclosing ball of the polyhedron, which ensures a strict decrease in the radius of the ambiguity set after each major iteration. Moreover, the construction of cuts in CPM no longer relies on Sonnevend’s ellipsoid or any outer ellipsoid.}
Furthermore, instead of performing a sequence of independent cuts, CPM generates a set of mutually orthogonal 
cuts within a single major iteration, with the number of hyperplanes equal to the dimension of the ambiguity set. By treating these cuts and their corresponding queries as a unified operation, CPM explicitly enforces the angular separation of successive hyperplanes, enabling geometric control over the elicitation process.

The main contributions of this paper can be summarized as follows.

\begin{itemize}
    \item \textbf{A geometrically controlled polyhedral elicitation framework.}
    We propose CPM for adaptive preference elicitation based on pairwise comparisons. Unlike the existing polyhedral methods, which first generate a query and then derive the associated cut, CPM reverses this order: it first specifies geometrically controlled cutting planes and then constructs pairwise comparison queries whose separating hyperplanes coincide with the prescribed cuts. This cut-first design creates an explicit correspondence between cuts and queries. At each major iteration, CPM uses the center of the minimum enclosing ball (MEB) as the reference point and generates a set of mutually orthogonal cutting planes, thereby enforcing the angular separation between cuts and preventing the nearly parallel cuts that may arise in the classical polyhedral method. This geometric structure is the key mechanism behind the guaranteed contraction of the ambiguity set.

    \item \textbf{Extension from multivariate linear utility to nonlinear univariate utility functions.}
    We develop CPM not only for multivariate linear utility functions but also for univariate piecewise-linear utility functions under expected utility theory. By representing a piecewise-linear univariate utility function through its vector of increments over consecutive breakpoints, the corresponding elicitation problem is transformed mathematically into a polyhedral cut problem with finite dimensions. For any prescribed cutting plane in the incremental vector space, we provide a construction for generating a query with a pair of lotteries whose expected-utility comparison induces that hyperplane exactly. We further extend the framework to general normalized, nondecreasing, Lipschitz-continuous utility functions by introducing an adaptive-breakpoint CPM. The algorithm refines the set of breakpoints through
    an outer loop and performs CPM on the current piecewise-linear approximation in the inner loop.
    To avoid direction errors caused by approximation away from the breakpoints, we restrict the lottery outcomes in each inner query to the current set of breakpoints, which ensures that the true utility comparison and its piecewise-linear representation remain consistent.

\item \textbf{Quantitative convergence and complexity guarantees.}
To the best of our knowledge, this paper provides the first quantitative convergence analysis for a deterministic polyhedral elicitation scheme based on adaptive pairwise comparisons. For multivariate linear utilities, we prove that the diameter of the ambiguity set decreases at a linear convergence rate with the number of major CPM iterations. This yields explicit query budget bounds for achieving a prescribed accuracy of estimation and for recovering the correct signs of the partworth components. For univariate piecewise-linear utilities, we prove convergence of the induced ambiguity set of utility functions under both the Kantorovich and Kolmogorov metrics. For general Lipschitz-continuous utilities, we establish an explicit error bound for the 
CPM with adaptively updated breakpoints in the utility-function space, where the bound consists of two sources of errors: the piecewise-linear approximation error induced by the current set of breakpoints  and the estimation error from the current ambiguity set after CPM cuts. 

    \item \textbf{Numerical performance and downstream decision implications.}
For multivariate linear utility functions, we conduct an illustrative numerical experiment to visualize the behavior of CPM and compare it with
the 
well-known polyhedral method 
by \citet{toubia2004polyhedral}.
The experiment intuitively demonstrates their
difference in 
the 
cutting mechanisms. For univariate piecewise-linear utility functions, we verify the convergence of CPM numerically and provide an additional experiment showing its robustness compared with
the modified polyhedral method by \citet{zhang2025modified}. Moreover, an illustrative experiment on general univariate utility elicitation demonstrates that the CPM with adaptively updated breakpoints converges stably toward the true nonlinear utility function, which is consistent with the theoretical convergence results. Finally, we examine a practical wireless-headset application and a downstream robust personalized pricing problem, where the elicited ambiguity set is used to determine conservative individualized prices directly. The results show that CPM generally leads to a more stable reduction in residual uncertainty in preference, partworth sign correctness and pricing conservatism.
\end{itemize}

The 
rest of the paper is organized as follows.
Section \ref{sec:multivariate} 
recalls the basics of the well-known polyhedral method for eliciting a
multivariate linear utility
function.
Section \ref{sec:coordinate-wise} 
introduces 
the CPM for multivariate linear utility functions and analyzes
convergence.
Section~\ref{sec:univariate}
extends the CPM to the univariate piecewise-linear utility function.
Section~\ref{sec:continuous-utility} further extends the CPM to the general nonlinear 
univariate utility function.
Section \ref{sec:numerical} 
showcases the application of CPM
to a wireless-headset pricing problem.
Finally, in Section~\ref{sec:conclusion}, we 
conclude with some remarks 
for future research.
To ease reading, we defer 
proofs to the appendix.

Throughout the paper, we use the following notation.
By convention, we use
$\R^n$ to denote the $n$-dimensional Euclidean space and $\R_+^n$ denote 
its nonnegative orthant. 
 We write $(x_1,\ldots,x_n)^\top$ for an $n$-dimensional column 
vector $\x$ 
and 
$\x^\top$ for the 
transpose of $\x$. Given 
a vector $\x=(x_1,\ldots,x_n)^\top\in\R^n$, for any integer $m\leqslant n$,
we write $\x_{[m]}$ for $(x_1,\ldots,x_m)^\top$. 
As usual, $\bm{e}$ stands 
for the vector with all components being 1 and $\bm{e}_j$ represents $\bm e_j=(0,\dots,0,\underset{j\text{-th}}{1},0,\dots,0)^\top$.
To ease reading, we use
calligraphic letters to denote sets, bold lowercase letters to denote vectors, and bold uppercase letters to denote matrices.
We use $|\mathcal{S}|$ to represent the cardinality of a set $\mathcal{S}$. 
Given any
matrix $\bm{A}=(a_{ij})_{{i=1,\ldots,m},{j=1,\ldots,n}}$ in the space $\R^{m\times n}$ of $m\times n$ real matrices, $\bm{a}_i^\top$ denotes its $i$-th row vector. 
For $x\in\R$, we use $\lceil x\rceil$ to denote the smallest integer not less than $x$ and $\lfloor x\rfloor$ to denote the largest integer no greater than $x$.

\section{Multivariate linear utility function and 
preference elicitation}\label{sec:multivariate}

We begin by considering the case where the DM's\footnote{In the literature of discrete choice models, DM refers to customers whose preferences 
are identical.
In the case where different customers may have different preferences, $\vv$ is a vector of 
random variables whose distribution represents 
the spread of preferences across the whole population of customers and consequently  $u(\x)$ is a random utility function and Bayesian method is used to update the posterior distribution of $\vv$ based on elicited information, see \citet{saure2019ellipsoidal}. 
} preferences can be represented by a parametric multivariate linear utility function and information about the vector of parameters is incomplete. We describe the model and recall the well-known polyhedral method 
of \citet{toubia2004polyhedral}.


\subsection{Multivariate linear utility function}\label{sec:utility}

    



    

We consider a decision-making environment where the DM evaluates a set of 
homogeneous products, 
each described by a vector of attributes $\x\in \mathbb{R}^N$.
The DM's preference is represented 
by a normalized parametric 
linear utility function
\begin{equation}\label{eq:LU}
    u(\x) := \vv^\top \x, \quad \bm{e}^\top\vv=1,
\end{equation}
where $\vv=(v_1,\ldots,v_N)^\top\in \R^N$ is the partworth vector  that characterizes the relative importance the DM assigns to product attributes. 
The normalization condition $\bm{e}^\top\vv=1$ is imposed to remove scale indeterminacy and ensure that utilities are unique up to ordinal transformations \citep{vayanos2020robust,wei2025personalized}.
Because of the normalization condition, 
we can represent the partworth vector 
equivalently 
with the first $(N-1)$ components by introducing 
 a  {\em recovery mapping} 
 $\RR:\R^{N-1}\rightarrow \R^{N}$ 
    \begin{equation}
    \label{map:recover}
   \RR(\bm{y})=(\bm{y},1-\bm{e}^\top\bm{y}),
    \end{equation}
where $\RR(\cdot)$ lifts a vector $\bm{y}\in\mathbb{R}^{N-1}$ to an $N$-dimensional vector 
satisfying the normalization $\bm{e}^\top\RR(\bm{y})=1$.
Hence, 
the space of feasible partworth vectors $\vv$ can, without loss of generality, be regarded as $\R^{N-1}$.
For brevity, we denote by $\vv\in\R^{N-1}$ the reduced partworth vector and by $\RR\vv\in\mathbb{R}^N$ 
its recovery.
Accordingly, the linear utility function in \eqref{eq:LU}
can be expressed by 
 \begin{equation}\label{eq-utility}
u(\x)=(\RR\vv)^\top \x .
 \end{equation}
 We assume that the true value $\vv^*$ corresponding to the true utility function is unknown and must be elicited through pairwise comparison questionnaires 
and that $\vv^*$ lies
in the initial 
polyhedral ambiguity set\footnote{Throughout the paper, we use the terms ``ambiguity set'' and ``polyhedron'' interchangeably for ${\cal P}^k$ depending on the context. The former emphasizes incomplete information about the true partworths whereas the latter describes its geometric structure which is to be reduced/updated 
during the process of eliciting preference.
} 
  \begin{equation}
  \label{eq:initial-polyhedralset}
  \mathcal{P}^0=\{\vv\in\RR^{N-1}\mid \bm{A}\vv\leqslant\bm{b},\ \bm{A}\in\R^{m\times (N-1)},\ \bm{b}\in \R^m\}
   \end{equation}
   which is
   constructed based on 
   available information or prior judgment, 
and $\mathcal{P}^0$ is 
bounded. 
The preference elicitation process should be understood as interactions between the DM and a modeler.   


\subsection{Preference elicitation process based on pairwise comparisons}\label{sec:elict}

In the literature of preference learning, various 
methods have been proposed 
to elicit a DM's preference. Here we focus on 
a pairwise comparison approach which is based on interactions between the DM and 
a modeler.
In the process, the modeler designs a pair of 
products $A$ and $B$ (called a query) and asks the DM's preference. 
An example of a query is shown in Figure~\ref{fig:query}.
Throughout this paper, we make the following assumption.
\begin{assumption}
There exists a unique true utility function $u^*$ 
which characterizes 
the DM's 
preferences and there
is no error in the process of eliciting preference, whether in the DM's response or in the observation of the DM's response by the modeler.
It means that the DM prefers $A$ to $B$ if and only if $u^*(\x^A)\geqslant u^*(\x^B)$. 
\end{assumption}
\noindent
Figure~\ref{fig:elic-process} depicts 
the interactions between the DM and the modeler.
The elicitation process is
adaptive in the sense that 
the currently elicited preference information is incorporated in the design  
of the next query.
The general procedure
can be described as follows.

\begin{itemize}
    \item[Step  1.] Initialization:
The modeler constructs an initial ambiguity set $\mathcal{P}^0$ of $\vv^*$ defined in \eqref{eq:initial-polyhedralset}. Set the number of iterations $k :=0$.
   
 \item[Step 2.] Design of query:
based on the current ambiguity set, the modeler designs a new query $(A^k,B^k)$ with attribute vector $\x^{A^k},\x^{B^k}\in \mathbb{R}^N$. 

 \item[Step 3.] Observation of the DM's preference:
the DM is given the query and makes a choice. Let
\begin{equation}\label{eq:observe-response}
\ \mu^k=\left\{\begin{array}{ll}-1, & \inmat{ if } u^*(x^{A^k})\geqslant u^*(x^{B^k}),\\
1, &\inmat{ if } u^*(x^{A^k})\leqslant u^*(x^{B^k}).
\end{array}
\right.
\end{equation}
 \item[Step 4.] Update of the ambiguity set:
the modeler updates the ambiguity set $\mathcal{P}^{k-1}$ after obtaining the DM's response to query 
$({\bm x}^{A^k},{\bm x}^{B^k})$
by incorporating the region consistent with the response,
\begin{equation}
\label{eq:poly-cut-market}
\mathcal{P}^{k}:=\mathcal{P}^{k-1}
\cap\Big\{{\vv}\in \R^{N-1}|\mathcal{H}^k(\vv) :=\mu^k\left[(\RR\vv)^{\top}{\bm x}^{A^k}- (\RR\vv)^{\top}{\bm x}^{B^k}\right]\leqslant0\Big\},
\end{equation}
where $\mathcal{H}^k(\vv)=0$ is a separating hyperplane 
cutting 
$\mathcal{P}^{k-1}$ into two sub-polyhedra.


 \item[Step 5.]	Stopping criterion:
 if $\mathcal{P}^{k}$ 
 reaches a prescribed 
 size, then stop the iteration and output $\mathcal{P}^{k}$; otherwise, set $k :=k+1$ and go to Step 2.
\end{itemize}




\begin{figure}[htb]
    \centering
    \includegraphics[width=0.5\textwidth]
    {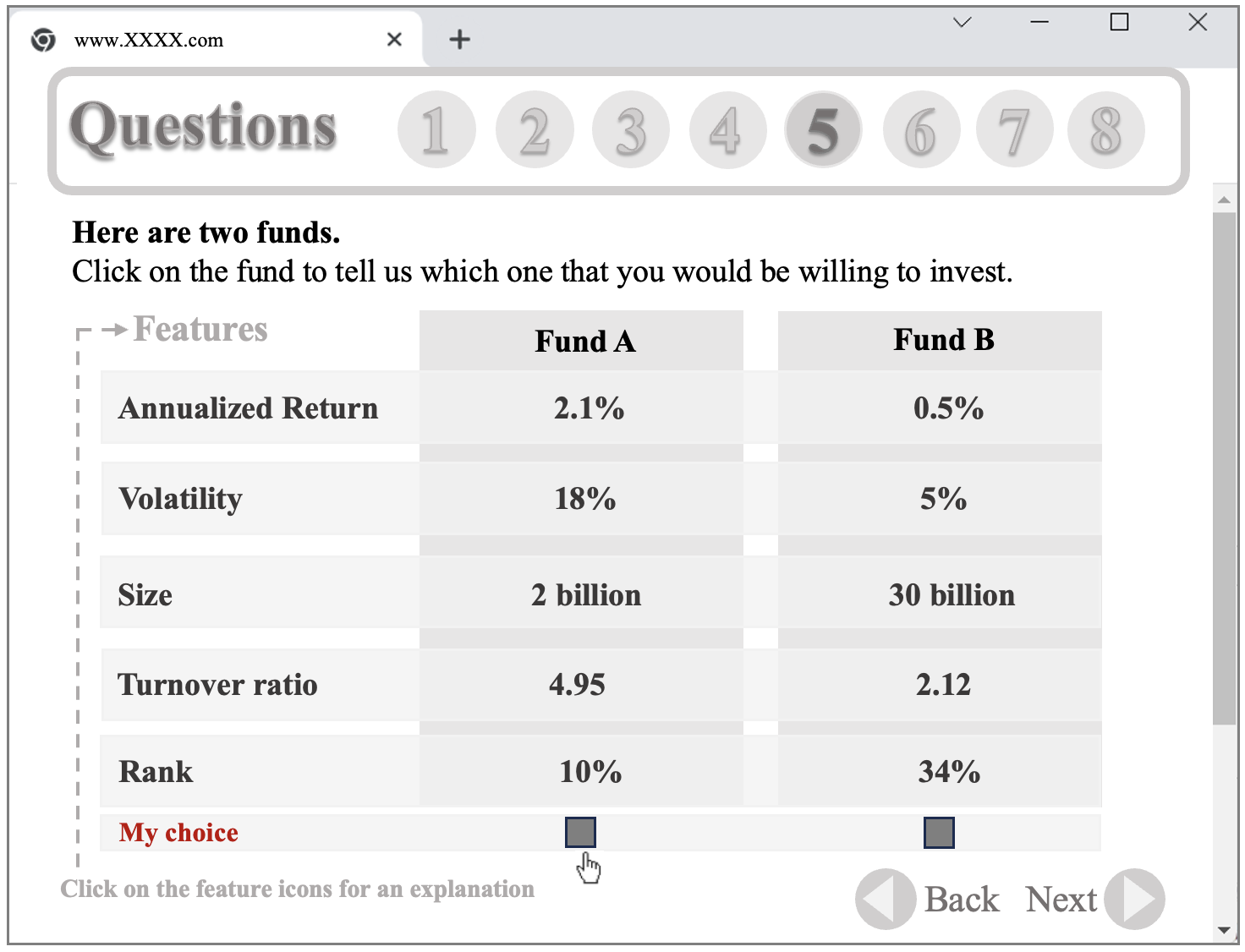}
    \caption{Example of a query for funds}
    \label{fig:query}
\end{figure}
\begin{figure}[htb]
    \centering
    \includegraphics[width=0.8\textwidth]{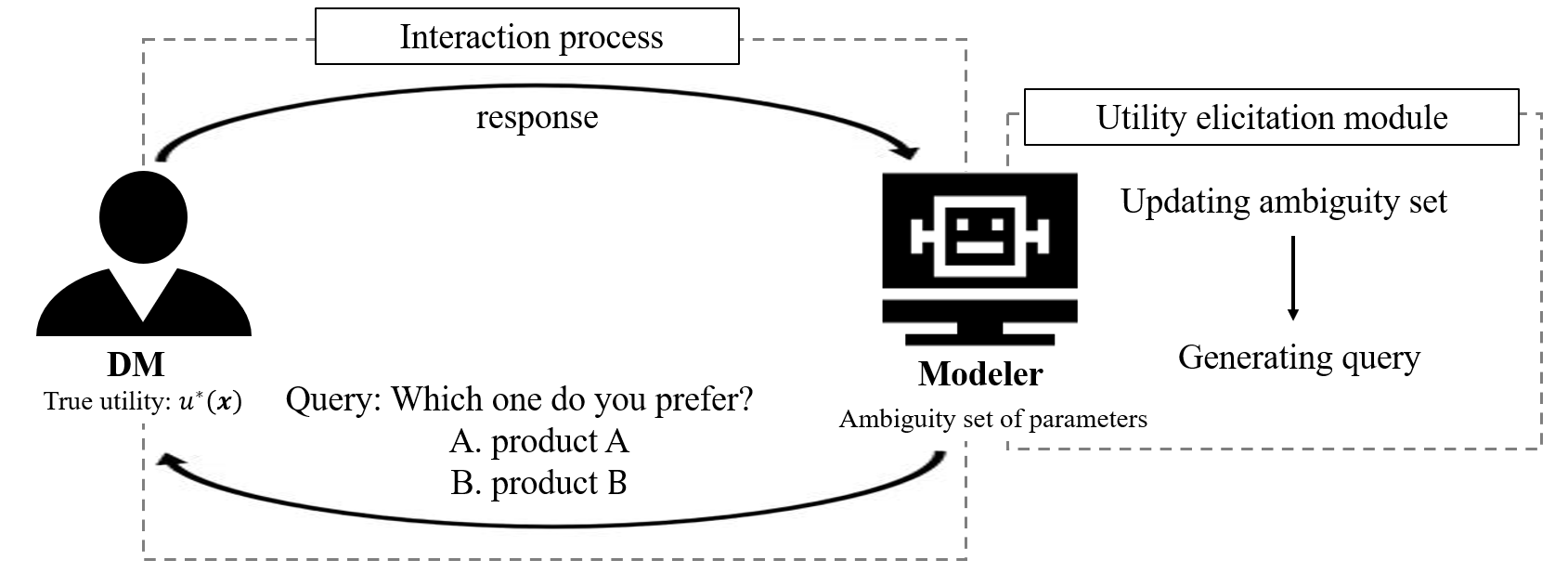}
    \caption{The process of utility elicitation between the modeler and the DM}
    \label{fig:elic-process}
\end{figure}

The main challenge in the procedure is in the design of queries that enable the modeler to reduce the ambiguity set efficiently. 
\citet{toubia2004polyhedral,toubia2007probabilistic} select queries by exploiting the geometry of the current polyhedron through its analytic center. They propose an optimal query design framework that is guided by four practical criteria including nondominance, feasibility, choice balance and post‑choice symmetry.
The resulting algorithm constructs two profiles whose separating hyperplane passes through the analytic center and is approximately orthogonal to the longest axis of an outer ellipsoid, thereby aiming for near‑balanced cuts. We will return to this in detail in the next subsection.
\citet{bertsimas2013learning} retain these geometric criteria but embed them in a robust mixed‑integer formulation with binary “flip” variables, so that queries remain informative even when a bounded number of past answers are inconsistent. 
\citet{saure2019ellipsoidal} propose a Bayesian approach that quantifies 
the epistemic uncertainty about $\vv^*$ 
with a multivariate
distribution that is updated after the DM's responses to queries. By using normal approximations to posterior distributions, they
can include response error in an approximate Bayesian approach that is as intuitive as the
polyhedral approach and allows the use of effective optimization-based techniques for adaptive
question selection.

\subsection{The polyhedral method 
}
\label{sec:toubia}

In this subsection, we 
recall the 
mathematical framework
of the polyhedral cut method introduced by \citet{toubia2004polyhedral} 
as our proposed CPM is closely related to it. We begin by  recalling the definitions of 
analytic center and Sonnevend's ellipsoid of a bounded convex polyhedron.

\begin{definition}
[Analytic Center (AC) and Sonnevend’s ellipsoid]
\label{def:AC}
Consider a bounded
polyhedron 
$\mathcal{P}=\{\vv\in\R^{N-1}\mid {\bm a}_i^\top {\bm v} \leqslant {b}_i, i=1,\ldots,m\}$.
The analytic center $\tilde{\bm{c}}$ of 
the polyhedron  
is defined as an optimal 
solution of the minimization problem $-\sum_{i=1}^m \log({b}_i-{\bm a}_i^\top {\bm v})$.
Given the analytic center $\tilde{\bm{c}}$ of $\mathcal{P}$, the Sonnevend’s (outer) ellipsoid of polyhedron $\mathcal{P}$ is
$$E_{\text{Sonn}}:=\Big\{\vv\in \R^{N-1}:(\vv-\tilde{\bm{c}})^\top\bm{H}(\vv-\tilde{\bm{c}})\leqslant m(m-1)\Big\},$$
where $\bm{H}=\sum^{m}_{i=1}\frac{1}{(b_i-\bm{a}_i^\top\tilde{\bm{c}})^2}\bm{a}_i^\top \bm{a}_i.$
\end{definition}

From the definition, we can see that the analytic center is the point that maximizes the product of its
distances to 
all
hyperplanes ${\bm a}_i^\top {\bm v} = {b}_i$,
$i=1,\ldots,m$.
By Definition~\ref{def:AC},
the analytic center $\tilde{\bm{c}}$ of polyhedron $\mathcal{P}$ is the optimal solution ${\bm v}$ of 
the following problem 
\begin{subequations}
\label{eq:ana_center}
\begin{align}
\max\limits_{{\bm v},{\bm s}}~~& \sum_{i=1}^{m} \log(s_i) \\
{\rm s.t.}\quad & {\bm A} {\bm v}+{\bm s}={\bm b}, {\bm s}\geqslant 0,
\end{align}
\end{subequations}
where ${\bm s}:=(s_1,\cdots,s_{m})^\top \in \R^{m}$ are slack variables.
\citep{toubia2004polyhedral}  propose a polyhedral method 
which aims to elicit a multivariate linear utility with 
feature vector $\x$ in a prespecified feasible set $\XX$.
\bgeqn 
u^*(\x)=(\RR\vv)^\top\x,\ \RR\vv\in\R^N.
\edeqn 
They 
propose a specific iterative method for generating queries and implementing polyhedral cuts as follows:
at the $k$-th iteration,
\begin{itemize}

\item compute the AC of the current polyhedron $\mathcal{P}^{k-1}$, denoted by $\tilde{\bm c}^{k-1}$, construct the associated Sonnevend's ellipsoid, and identify its longest axis;

\item 
identify the two intersections ${\bm v}_1^{k-1}$ and ${\bm v}_2^{k-1}$ between
the longest axis and the boundary of the polyhedron $\mathcal{P}^{k-1}$;

\item select ${\bm x}^{A^k}$ and
${\bm x}^{B^k}$ by solving two  
optimization
problems such that the separating hyperplane divides the region into two approximately equal-sized 
sub-polyhedra:
\begin{subequations}
\begin{align}
& {\bm x}^{A^k}\in \mathop{\arg\max} 
\Big\{(\RR{\bm v}_1^{k-1})^{\top}{\bm x}\mid \x\in\XX,\;
{(\RR\tilde{\bm{c}}^{k-1})}^{\top}{\bm x} \leqslant D\Big\},\label{eq:original_PM_a}\\
& {\bm x}^{B^k}\in \mathop{\arg\max} 
\Big\{(\RR{\bm v}^{k-1}_2)^{\top}{\bm x}\mid \x\in\XX,\;
(\RR{\tilde{\bm{c}}^{k-1}})^{\top}{\bm x} \leqslant  D\Big\},
\label{eq:original_PM_b}
\end{align}\label{eq:original_PM}
\end{subequations}
where $D>0$ is a constant drawn from a uniform distribution over some interval and re-drawn until ${\bm x}^{A^k}\neq {\bm x}^{B^k}$.

\end{itemize}
The polyhedral cut method
is adaptive in the sense that 
the design of queries \eqref{eq:original_PM_a} and \eqref{eq:original_PM_b}
depends on ${\cal P}^{k-1}$ via 
$\vv_1^{k-1}$ and $\vv_2^{k-1}$.
It
works very well in practical applications. However, as far as we are concerned, 
there is
no theoretical guarantee of convergence 
for the following two primary reasons.
\begin{itemize} 
\item 
\textbf{Lack of angular separation between two consecutive  cuts.} The 
cut in each iteration is constructed based on the longest axis of the Sonnevend's ellipsoid of the current feasible region. 
It passes through the analytic center 
and yields two subregions of roughly equal volume (known as the criteria of choice and post-choice symmetry). 
However, the mechanism to generate queries \eqref{eq:original_PM} does not guarantee that the separating plane is perpendicular to the longest axis (since ${\bm x}^{A^k}-{\bm x}^{B^k}$ 
is not necessarily parallel to $\vv_1^{k-1}-\vv_2^{k-1}$)
nor that 
any two consecutive separating planes are perpendicular.
This is primarily because
the queries are  designed by
solving the optimization 
problems \eqref{eq:original_PM_a}-\eqref{eq:original_PM_b} rather than
 specified geometric cuts.



\item \textbf{Uncontrolled radius of the outer ellipsoid.} 
The method 
designs 
queries by constructing an outer ellipsoid centered on the analytic center that encloses the polyhedron. 
This ellipsoid is 
useful for identifying 
cut directions, but it is not designed to minimize the worst-case Euclidean distance from its center to any points in the ambiguity set. 
Consequently, the resulting sequence of cuts does not yield a deterministic contraction bound for the diameter/radius of the polyhedron (ambiguity set of the partworth vectors).

\end{itemize}
This prompts us to propose the CPM.





\section{CPM for linear utility function}
\label{sec:coordinate-wise}


The main purpose of the Coordinate-wise Polyhedral Method
is to 
avoid 
two consecutive 
cutting planes being nearly parallel. 
Specifically,
at each iteration, we conduct $N-1$ cuts
and the normal vectors of the cutting planes are mutually orthogonal. This ensures that
the size of the polyhedron after cutting is reduced to a fixed proportion of the previous one. 

\subsection{Minimum enclosing ball and its center}

We begin by introducing 
the concepts of the minimum enclosing ball 
since we need it to construct cuts and estimate 
the size of the polyhedron at the end of each iteration of coordinate-wise cuts.

\begin{definition}[Minimum enclosing ball (MEB) of polyhedron 
\citep{yildirim2008two}
]
\label{def:circle}
Let  
$\mathcal{P}=\{\vv\in\R^{N-1}\mid A\vv\leqslant b\}$
be a bounded polyhedron in $\R^{N-1}$.
The minimal 
ball, written $\mathcal{B}$,
enclosing $\mathcal{P}$ is 
\bgeqn 
\mathcal{B}(\cc,r)=\{\vv\in\R^{N-1}\mid \|\vv-\bm{c}\|_2\leqslant r\},
\edeqn 
where the center of the MEB, written 
$\cc$, and the radius $r$ are defined by
\begin{subequations}
    \begin{eqnarray}
  \{\cc,r\}= & \mathop{\arg\min}\limits_{\cc\in\R^{N-1},r\in\R}   & r \\
   &\rm{s.t.}  & \|\vv-\cc\|_2\leqslant r, \ \forall\vv\in \mathcal{P}.
    \end{eqnarray}
\end{subequations} 
\end{definition}

Below, we give two technical results concerning the center of MEB.


\begin{lemma}\label{lemma:MEBcenter}  
Let $\Omega \subset \mathbb{R}^N$ be a nonempty bounded closed convex set, and let $\cc$ be the center of the minimum enclosing ball of $\Omega$. Then $\cc\in\Omega$.
\end{lemma}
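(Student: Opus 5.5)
The plan is a projection argument by contradiction. Suppose $\cc\notin\Omega$, and let $r$ be the radius of the minimum enclosing ball. Since $\Omega$ is nonempty, closed and convex, the Euclidean projection $\bm{p}:=\Pi_\Omega(\cc)$ exists and is unique. It satisfies the variational inequality
\[
(\cc-\bm{p})^\top(\vv-\bm{p})\leqslant 0 \quad \forall \vv\in\Omega,
\]
and $\bm{d}:=\cc-\bm{p}\neq 0$ because $\cc\notin\Omega$. I will show that $\bm{p}$ is strictly closer than $\cc$ to every point of $\Omega$, uniformly over $\Omega$. Then a ball centred at $\bm{p}$ with radius smaller than $r$ contains $\Omega$, which contradicts the minimality in Definition~\ref{def:circle}.

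The key computation fixes $\vv\in\Omega$ and expands
\[
\|\vv-\cc\|_2^2=\|\vv-\bm{p}\|_2^2-2\bm{d}^\top(\vv-\bm{p})+\|\bm{d}\|_2^2 .
\]
By the variational inequality the middle term is nonnegative, so $\|\vv-\bm{p}\|_2^2\leqslant \|\vv-\cc\|_2^2-\|\bm{d}\|_2^2\leqslant r^2-\|\bm{d}\|_2^2$. Taking the supremum over $\vv\in\Omega$ gives $\Omega\subseteq\mathcal{B}\big(\bm{p},\sqrt{r^2-\|\bm{d}\|_2^2}\big)$. The new radius is strictly less than $r$, and $r^2-\|\bm d\|^2\ge 0$ because $\Omega\neq\emptyset$. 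This contradicts the optimality of $(\cc,r)$, so $\cc\in\Omega$.

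The only delicate step is to make sure the minimum enclosing ball is well defined, since the contradiction is drawn against it. Existence holds because the objective $\cc\mapsto\sup_{\vv\in\Omega}\|\vv-\cc\|_2$ is continuous and coercive, as $\Omega$ is bounded. Strictly, the argument does not even need uniqueness: \emph{any} minimizing center must lie in $\Omega$. The uniform bound $r^2-\|\bm d\|^2$ is also what makes the contradiction clean, because $\Omega$ need not be compact-attained in the supremum, so a uniform bound is needed rather than a pointwise one. Boundedness of $\Omega$ enters only through existence of $r$, closedness through existence of the projection, and convexity through the variational inequality.
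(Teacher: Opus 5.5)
Your proof is correct and follows the same route as the paper: you project the putative out-of-set center onto $\Omega$, expand the squared distance, and use the projection's variational inequality to show that the projected point is strictly better, which contradicts minimality. Your version is slightly tidier, since you use the correct weak inequality $\bm d^\top(\vv-\bm p)\leqslant 0$ (the paper's strict version fails at $\bm y=\x^*$) and obtain the uniform bound $r^2-\|\bm d\|_2^2$. Your caveat about the supremum possibly not being attained is unnecessary, because a bounded closed subset of $\R^N$ is compact.
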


{\color{black}
Using this lemma, we can determine the center of MEB 
containing any remaining part  
of a ball in $\R^{N-1}$ after the ball is cut
by $(N-1)$ mutually orthogonal hyperplanes
passing through center $\cc$. This result is
essential to the proof of convergence of CPM discussed in Section~\ref{sec:convergence}.}


\begin{proposition}\label{prop:C_radius-reduction}
Let 
$
\mathcal B(\cc,r)=\{\vv\in\R^{N-1}:\|\vv-\cc\|_2\leqslant r\}
$
be the closed Euclidean ball with center $\cc\in\R^{N-1}$, radius $r>0$ and $N\geqslant3$.
Partition $\mathcal B(\cc,r)$ by
${N-1}$ mutually orthogonal hyperplanes passing
through the center $\cc$ which are coordinate-wise hyperplanes 
$\{\vv:\;v_i-c_i=0\}$, 
$i=1,\dots,{N-1}$. Let
$
\mathcal O:=\{\vv\in\mathcal B(\cc,r):\;v_i-c_i\geqslant0\ \text{for all }i\}
$
be one of the $2^{N-1}$ congruent orthants obtained. Then the minimum enclosing
ball of $\mathcal O$
is centered at
\bgeqn 
\label{eq:center-update-after-cut}
\cc^* = \cc + \frac{r}{{N-1}}\bm{e},\; \text{where} \; \bm{e}=(1,\dots,1)^\top
\edeqn 
with radius
\bgeqn 
\label{eq:radius-reduction}
r^* = r\sqrt{1-\frac{1}{{N-1}}}.
\edeqn 
\end{proposition}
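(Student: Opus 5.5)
We will prove the statement by exhibiting an explicit ball that contains $\mathcal O$ and then showing, with a separate lower bound, that no ball containing $\mathcal O$ can be smaller. We first reduce to a normalized setting. Translating by $-\cc$ and scaling by $1/r$ are similarities, so they map minimum enclosing balls to minimum enclosing balls. It therefore suffices to treat $\cc=0$ and $r=1$, and to show that the MEB of
\[
\mathcal O=\{\vv\in\R^{n}:\ \vv\geqslant 0,\ \|\vv\|_2\leqslant 1\},\qquad n:=N-1\geqslant 2,
\]
is centered at $\bm e/n$ and has radius $\sqrt{1-1/n}$. Undoing the normalization then gives \eqref{eq:center-update-after-cut} and \eqref{eq:radius-reduction}.

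\textbf{Step 1 (upper bound: $\mathcal O\subseteq\mathcal B(\bm e/n,\sqrt{1-1/n})$).} For $\vv\in\mathcal O$ we expand
\[
\|\vv-\bm e/n\|_2^2=\|\vv\|_2^2-\tfrac{2}{n}\bm e^\top\vv+\tfrac1n .
\]
Since $\vv\geqslant 0$, we have $\bm e^\top\vv=\|\vv\|_1\geqslant\|\vv\|_2$. Writing $t:=\|\vv\|_2\in[0,1]$, this gives
\[
\|\vv-\bm e/n\|_2^2\leqslant t^2-\tfrac{2t}{n}+\tfrac1n .
\]
The right-hand side is a convex function of $t$, so it is maximized at an endpoint of $[0,1]$. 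At $t=0$ it equals $1/n$, and at $t=1$ it equals $1-1/n$. Because $n\geqslant 2$, we have $1/n\leqslant 1-1/n$, so $\|\vv-\bm e/n\|_2^2\leqslant 1-1/n$ on all of $\mathcal O$. This is exactly where the hypothesis $N\geqslant 3$ enters.

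\textbf{Step 2 (lower bound and uniqueness).} The unit vectors $\bm e_1,\dots,\bm e_n$ all lie in $\mathcal O$. For any candidate center $\cc'$, the parallel-axis (variance) identity gives
\[
\sum_{i=1}^n\|\bm e_i-\cc'\|_2^2=\sum_{i=1}^n\|\bm e_i-\bm e/n\|_2^2+n\|\cc'-\bm e/n\|_2^2 .
\]
Each term $\|\bm e_i-\bm e/n\|_2^2$ equals $(1-1/n)^2+(n-1)/n^2=1-1/n$. Hence
\[
\max_i\|\bm e_i-\cc'\|_2^2\geqslant 1-\tfrac1n+\|\cc'-\bm e/n\|_2^2 .
\]
Any ball containing $\mathcal O$ therefore has radius at least $\sqrt{1-1/n}$, with equality only when $\cc'=\bm e/n$. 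Together with Step 1, this shows that the minimal radius is $\sqrt{1-1/n}$ and that the center $\bm e/n$ is unique. Lemma~\ref{lemma:MEBcenter} is consistent with this, since $\bm e/n\in\mathcal O$, but it is not needed.

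\textbf{Main obstacle.} The only delicate point is Step 1, namely controlling points of $\mathcal O$ that lie strictly inside the ball, including the vertex $0$. A naive argument that checks only the spherical cap misses these points. The reduction to the single scalar variable $t=\|\vv\|_2$, via $\|\vv\|_1\geqslant\|\vv\|_2$ on the nonnegative orthant, handles all points at once. It also makes the role of $n\geqslant 2$ explicit: the vertex $0$ lies at distance $1/\sqrt n$ from $\bm e/n$, which does not exceed $\sqrt{1-1/n}$ precisely when $n\geqslant 2$.
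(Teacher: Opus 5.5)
Your proof is correct, but the optimality half takes a different route from the paper's. The paper normalizes $\cc=0$ and asserts, by permutation symmetry, that the MEB center has the form $s\bm e$. It then computes the exact worst-case function $\Phi(s)=\max\{(N-1)s^2,\ r^2-2sr+(N-1)s^2\}$: the maximum of $\|\vv-s\bm e\|_2^2$ over $\mathcal O$ is attained at the origin or on the spherical part, by convexity along rays from the origin and $\|\vv\|_1\geqslant r$ on the sphere. Finally it minimizes this piecewise quadratic, with a case split at $s=r/2$.

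Your Step 1 is essentially that computation evaluated at the single value $s=1/n$. Using $\|\vv\|_1\geqslant\|\vv\|_2$ together with convexity in $t$ disposes of interior points in one stroke, instead of via rays.

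Your Step 2 is the genuinely different part. Instead of minimizing along the diagonal, you bound the radius from below for every candidate center $\cc'\in\R^{n}$, using only the points $\bm e_1,\dots,\bm e_n$ and the parallel-axis identity. This gives uniqueness of the center for free and makes the symmetry reduction unnecessary. That reduction tacitly relies on uniqueness of the MEB center, which the paper does not establish. Your argument also shows that the MEB of $\mathcal O$ coincides with the minimum enclosing ball of the $n$ points $\bm e_i$ alone, with the origin inactive precisely when $n\geqslant 2$.

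In exchange, the paper's route is a derivation rather than a verification. It locates the center without having to guess it, and it displays the whole trade-off between the origin and the points $r\bm e_j$ along the diagonal.
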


\subsection{Structure of pairwise comparison queries and the CPM algorithm}

Analogous to the polyhedral method, we use pairwise 
comparison query
to elicit a DM's preferences.
However, the way we construct 
the queries will be completely different.
Let
\bgeqn 
\label{eq:Sep-pln-H}
\mathcal{H}=\{\vv\in\R^{N-1}\mid \bal^\top(\vv-\vv_0)=0\}
\edeqn 
be a given hyperplane passing through $\vv_0\in \R^{N-1}$. To ease the exposition, 
we call $\bal$ the {\em cut direction}.
${\cal H}$ acts as a separating hyperplane
of  query $(A,B)$ 
  if it satisfies the following: 
    \begin{subequations}
        \begin{align}
             &{\RR\vv}^\top(\x^A-\x^B)=0,\ \forall \vv\in {\cal H},\label{eq:sep_def1}\\
             &[(\RR\vv^{+})^\top(\x^A-\x^B)][(\RR\vv^{-})^\top(\x^A-\x^B)]
            \leqslant0,\ \forall \vv^+\in {\cal H}_{+},\ \vv^-\in {\cal H}_{-},\label{eq:sep_def2}
        \end{align}\label{eq:sep_def}
        \end{subequations}
        where 
        ${\cal H}_{+}=
        \{\vv\mid\bal^{\top}(\vv-\vv_0)  \geqslant0\}$ 
        and 
        ${\cal H}_{-}
        =
        \{\vv\mid\bal^{\top}(\vv-\vv_0)\leqslant0\}.
        $
        We are now ready to describe 
        the CPM algorithm 
        in Algorithm \ref{alg:VCPM_LU}.




\vspace{2mm}
\begin{breakablealgorithm}\label{alg:VCPM_LU}
\caption{CPM for eliciting a linear utility function} 
\begin{flushleft}
\textbf{Input:} initial 
polyhedron
$\mathcal{P}^0\in\R^{N-1}$, iteration index $k = 0$, 
and precision $\epsilon$.\\
\end{flushleft}
\begin{itemize}[leftmargin=0pt,label={},itemsep=0.6ex]
\item\textbf{Step 1.} 
Calculate the MEB center $\bm{c}^{k}$ and radius $r^k$ of ${\cal P}^k$.
\item\textbf{If} $r^k>\epsilon$ \textbf{do}
\item\textbf{Step 2.}
Identify $\bal^{k,1},\bal^{k,2},\ldots,\bal^{k,N-1}\in \R^{N-1}$ 
 which 
 form a normalized orthogonal basis 
  in $\R^{N-1}$.

\item\textbf{Step 3.} 
  \textbf{For} $i=1,\ldots,N-1$ \textbf{do}
    \begin{itemize}
     \item[-] Design a pairwise comparison query $(A^{k,i},B^{k,i})$
       with attribute vectors $(\x^{A^{k,i}},\x^{B^{k,i}})$ 
       corresponding to hyperplane
     \begin{equation} 
     {\cal H}^{k,i}=\{\vv\mid(\bal^{k,i})^\top(\vv-\bm{c}^k)=0\}.
     \end{equation}
 

    

\item[-] 
Ask the DM to choose between $A^{k,i}$ and $B^{k,i}$.
Denote by $\mu^{k,i}=-1$ if $A^{k,i}$ is chosen,
i.e., $$(\RR\vv^*)^\top \x^{A^{k,i}}{\geqslant}(\RR\vv^*)^\top \x^{B^{k,i}},$$  
and $\mu^{k,i}=1$ otherwise, see \eqref{eq:observe-response}.

  \item[-] Update the polyhedron $\mathcal{P}^{k,i-1}$:
  \begin{equation}
  \label{eq:update_rule}
             \mathcal{P}^{k,i} :=\mathcal{P}^{k,i-1}\cap 
             \Big\{\vv\mid
               \mu^{k,i}(\RR\vv)^\top (\x^{A^{k,i}}-\x^{B^{k,i}})\leqslant0\Big\}.
             \end{equation}
    \end{itemize}

    
    \item\textbf{Step 4.} Set $\PP^{k+1} :=\PP^{k,N-1}$ and $k :=k+1$. Go back to Step 1.
\end{itemize}
\begin{flushleft}
    \textbf{Output:} $\PP^{k}$.
\end{flushleft}
\end{breakablealgorithm}
\vspace{2mm}

In Step 2, the selection of $N-1$ orthogonal cut directions is not unique. 
Indeed, the convergence analysis of the algorithm (see Theorem \ref{theo:convergence_linear} later)
relies only on the orthogonality 
of the basis rather than the specific 
selection of the basis. One way to construct the orthogonal basis is to 
set $\bal^{k,1}$ as the direction of the longest axis of the minimum outer ellipsoid $\mathcal{E}^k$ (MOE) of $\mathcal{P}^k$, centered at the MEB center $\cc^k$:
\begin{equation}\label{def:MOE}
\mathcal{E}^k(\cc^k,\bm{Q})=\{\vv\in\R^{N-1}\mid(\vv-{\bm{c}}^k)^\top \bm{Q}^{-1}(\vv-{\cc^k})\leqslant 1\},
\end{equation} 
where the matrix $\bm{Q}$ solves 
\begin{subequations}
    \begin{eqnarray}
       & \min\limits_{\bm{Q}\in \mathbb{S}^{N-1}_{++}}\ \ & \log\det(\bm{Q})  \label{eq-basis-elp-obj}\\
       & \text{\rm s.t.}  &(\vv-{\bm{c}^k})^\top \bm{Q}^{-1}(\vv-{\cc^k})\leqslant 1, \ \forall\vv\in\mathcal{P}^k,
    \end{eqnarray}
\end{subequations}
Let $\{\lambda_i\}_{i=1}^{N-1}$ denote the eigenvalues of  $\bm Q$. The direction of the longest axis of $\mathcal{E}^k(\cc^k,\bm Q)$ is given by the eigenvector associated with the largest eigenvalue $\lambda_{\max}:=\max_i\lambda_i$ of $\bm Q$.
Then $\bal^{k,1}$ is set to be the unit eigenvector corresponding to $\lambda_{\max}$. After determining $\bal^{k,1}$, we select the remaining basis vectors recursively as follows.
Let ${\bm e_1,\ldots,\bm e_{N-1}}$ denote the coordinate-wise canonical basis of $\mathbb{R}^{N-1}$ with a fixed order, 
where $\bm e_j=(0,\dots,0,\underset{j\text{-th}}{1},0,\dots,0)^\top,\; j=1,\ldots,N-1$. 
Suppose that $\bal^{k,1},\ldots,\bal^{k,p}$ have already been constructed for $1\leqslant p\leqslant N-2$. To construct $\bal^{k,p+1}$, 
we 
compute
$\tilde{\bal}_{j}
:=
\bm e_j-\sum_{\ell=1}^{p}(\bm e_j^\top\bal^{k,\ell})\bal^{k,\ell}$
from $j=1$. Once an index $j$ satisfies $\|\tilde{\bal}_{j}\|_2>0$,  set
$\bal^{k,p+1}:=\frac{
\tilde{\bal}_j
}{\|
\tilde{\bal}_j
\|_2}$.
and then proceed to constructing the next direction.

There are potentially other ways to construct an orthogonal basis. 
For instance, After determining $\bal^{k,1}$, we may project the polyhedron $\mathcal P^k$ and the center $\cc^k$ onto the orthogonal complement of $\bal^{k,1}$. The projected set remains a polyhedron, and the projected center lies in the projected polyhedron although 
it is not necessary the MEB center of the latter. 
We then construct the MOE of the projected polyhedron in this lower-dimensional subspace, centered at the projected point, and choose its longest-axis direction as $\bal^{k,2}$. 
Continuing the process, 
once $\bal^{k,1},\ldots,\bal^{k,i-1}$ have been determined, we project $\mathcal P^k$ and $\cc^k$ onto the orthogonal complement of $\operatorname{span}\{\bal^{k,1},\ldots,\bal^{k,i-1}\}$, construct the MOE of the projected polyhedron centered at the projected point, and choose its longest-axis direction as $\bal^{k,i}$.
It is worth noting that the MOE used here is constrained to be centered at the MEB center. Therefore, its center may differ from that of the minimum-volume enclosing ellipsoid (see \cite{damla2008linear}, \citet{todd2016minimum}).

In Step 3, each $\mathcal H^{k,i}$ passes through $\cc^k$, and the $N-1$ hyperplanes are mutually orthogonal. The coordinate-wise 
cuts reduce 
the ambiguity set
in all directions simultaneously and guarantee a strict decrease 
in the radius of the MEB after each major iteration.
Figure~\ref{fig-process} illustrates the CPM cutting process when the orthogonal basis is designed based on the MOE. 
With a specified orthogonal basis, we move on to discuss 
how to design 
pairwise comparison query in Step~3 of the algorithm. The next theorem addresses this.

\begin{figure}[htbp]
  \centering
  \begin{subfigure}[t]{0.32\textwidth}
    \centering
    \includegraphics[width=\textwidth]{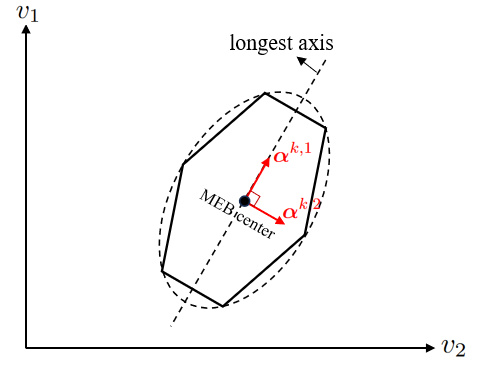}
    \caption{}\label{fig-process-1}
  \end{subfigure}
  \hfill
  \begin{subfigure}[t]{0.32\textwidth}
    \centering
    \includegraphics[width=\textwidth]{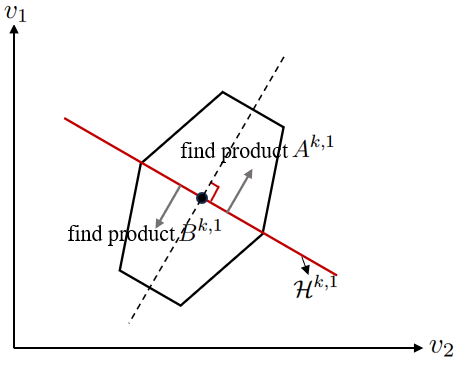}
    \caption{}\label{fig-process-2}
  \end{subfigure}
  \hfill
  \begin{subfigure}[t]{0.32\textwidth}
    \centering
    \includegraphics[width=\textwidth]{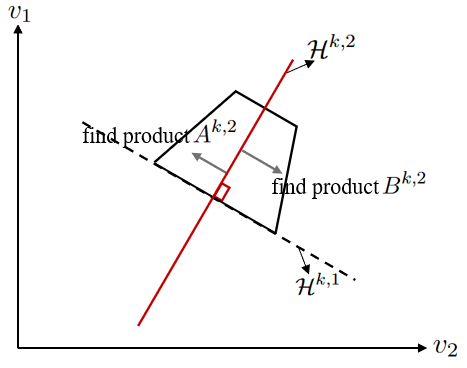}
    \caption{}\label{fig-process-3}
  \end{subfigure}
  \caption{\footnotesize Procedures of CPM. (a) Find the MEB center, the longest axis of MOE and orthogonal directions. (b) Find query and implement cutting. (c) Find coordinate-wise cutting plane and the query.}
  \label{fig-process}
\end{figure}

\begin{theorem}[Design of queries]
\label{thm:find_query}
Let ${\cal H}=\{\vv\in \R^{N-1}\mid\bal^\top(\vv-\bm{c})=0\}$ be a given hyperplane. Then there exists a pair of products $A,B$ with attribute vectors $\x^A,\ \x^B\in \R^N$  such that the hyperplane ${\cal H}$
 is the separating hyperplane of the query $(A,B)$, 
 where
\begin{subequations}\label{eq:def_x}
    \begin{align}
        x^A_N-x^B_N&=-\sum^{N-1}_{i=1}\alpha_i c_i,\label{eq:query_A}\\
        x^A_i-x^B_i&=(x^A_N-x^B_N)+\alpha_i=-\sum^{N-1}_{i=1}\alpha_i c_i+\alpha_i,\; \text{for}\; i=1,\ldots,N-1.\label{eq:query_B}
    \end{align}
\end{subequations}
\end{theorem}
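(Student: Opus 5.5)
Write $\bm d:=\x^A-\x^B\in\R^N$. Only this difference enters the definition \eqref{eq:sep_def}, so I will fix $\bm d$ through \eqref{eq:def_x} and then choose any pair $\x^A,\x^B$ realizing it, for instance $\x^B$ arbitrary and $\x^A=\x^B+\bm d$. The whole argument rests on one identity. For every $\vv\in\R^{N-1}$, the recovery mapping \eqref{map:recover} gives
\begin{equation*}
(\RR\vv)^\top\bm d=\sum_{i=1}^{N-1}v_i d_i+\Big(1-\sum_{i=1}^{N-1}v_i\Big)d_N=d_N+\sum_{i=1}^{N-1}v_i(d_i-d_N).
\end{equation*}
Substituting $d_i-d_N=\alpha_i$ and $d_N=-\bal^\top\cc$ from \eqref{eq:query_A}--\eqref{eq:query_B}, this becomes
\begin{equation*}
(\RR\vv)^\top(\x^A-\x^B)=\bal^\top\vv-\bal^\top\cc=\bal^\top(\vv-\cc).
\end{equation*}
So the utility difference between the two products is an affine function of $\vv$, and it is exactly the defining function of ${\cal H}$.

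Both requirements of \eqref{eq:sep_def} then follow at once.
\begin{itemize}
\item For \eqref{eq:sep_def1}: if $\vv\in{\cal H}$, then $\bal^\top(\vv-\cc)=0$, so $(\RR\vv)^\top(\x^A-\x^B)=0$.
\item For \eqref{eq:sep_def2}: take $\vv^+\in{\cal H}_+$ and $\vv^-\in{\cal H}_-$, with $\vv_0=\cc$. The two factors equal $\bal^\top(\vv^+-\cc)\geqslant0$ and $\bal^\top(\vv^--\cc)\leqslant0$, so their product is nonpositive.
\end{itemize}

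As a final remark I will note that the update \eqref{eq:update_rule} with this query keeps exactly the half-space $\{\mu\,\bal^\top(\vv-\cc)\leqslant0\}$, so the cut produced is precisely the prescribed hyperplane. This is what the later convergence analysis needs.

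The only real obstacle is the algebra of the recovery mapping, namely that the last coordinate carries $1-\bm e^\top\vv$. This is why $d_N$ must absorb the constant term $-\bal^\top\cc$ and each $d_i$ must be shifted by $d_N$. Once the displayed identity is verified, the rest is immediate. If the product space $\XX$ has to be respected, one would additionally choose $\x^B$ so that both $\x^B$ and $\x^B+\bm d$ lie in $\XX$, but the statement as given imposes no such constraint.
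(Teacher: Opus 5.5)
Your proposal is correct and follows the paper's proof essentially step for step. Like the paper, you expand $(\RR\vv)^\top(\x^A-\x^B)$ via the recovery mapping into $\bal^\top(\vv-\cc)$ and then read off both separating-hyperplane conditions from this identity. Your explicit choice $\x^A=\x^B+\bm d$ only spells out the existence claim that the paper leaves implicit.
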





\begin{remark}[Criterion of determining $\x^A$ and $\x^B$]
\label{remark:query_choose}
Let $\Delta:=\x^A-\x^B$ denote the vector of the differences between the attributes 
of the two products. Using $\Delta$, we can recast 
the  system of equalities \eqref{eq:def_x} 
in terms of 
$\Delta$.
The solution $\Delta$ of the system is unique. However, 
the  vectors of attributes $\x^A$ and $\x^B$
are not:
for any $\gamma>0$ and
$\bm{z}\in\R^N$ the pair
$(\gamma\x^A+\bm{z},\; \gamma\x^B+\bm{z})$ yields 
$\gamma\Delta$
and hence the same
separating hyperplane. 
This raises a question as to 
how to identify an appropriate 
pair of feasible profiles ($\x^A$ and $\x^B$).
Here we provide some guidelines.
First, boundedness. The attribute vectors should not be arbitrarily large for comparisons.
We can limit the value of attributes in a prespecified feasible set $\XX$. 
Second, sparsity.
The vectors are chosen to be as sparse as possible.
For example,  
a pair of profiles
$\x^A = (3,\,2,\,1,\,1,\,1)^\top$ and 
$\x^B = (2,\,3,\,1,\,1,\,1)^\top$ gives 
the same $\Delta$ as the pair $\hat{\x}^A = (1,\,0,\,0,\,0,\,0)^\top,
\hat{\x}^B = (0,\,1,\,0,\,0,\,0)^\top$. The latter is preferable
since it is easier to calculate the related utility values.
On the other hand, since the non-zero parts of  $\hat{\x}^A$
and $\hat{\x}^B$ differ, it might make it difficult for DMs (customers) to choose between the two products. 
Third, comparability versus practicality. 
The profiles should be as 
comparable as possible. 
For example, 
 $\x^A = (5,\,3,\,4)^\top$ and 
$\x^B = (4,\,4,\,4)^\top$ gives 
the same $\Delta$ as the pair $\hat{\x}^A = 
(7.3,\,4.8,\,6.1)^\top,
\hat{\x}^B = (6.3,\,5.8,\,6.1)^\top$.
However, the 
former makes it relatively easier for the DM/customer to make a choice whereas the latter might characterize products more realistically.

\end{remark}

\subsection{Estimation of the MEB center
and convergence  of Algorithm~\ref{alg:VCPM_LU}}\label{sec:convergence}

After $k(N-1)+i$ queries, the 
initial  
polyhedral 
set for the partworth vector is reduced to  $\mathcal{P}^{k,i},\ i=1,\ldots,N-1$.
We calculate the MEB center $\bm{c}^{k,i}$ of $\mathcal{P}^{k,i}$ 
and use it as an estimate 
of the 
true utility partworth vector. We refer to 
the
corresponding
utility function $u^{k,i}(\x)=(\RR\bm{c}^{k,i})^\top\x$ as the nominal utility function.
A potential advantage 
of using the MEB center as opposed to 
the AC 
is that it provides 
a better estimate in the case where the true partworth vector is located at the boundary of ${\cal P}^{k,i}$, see Figure~\ref{fig:MOE} for an illustration.
This is because by definition,
$$
\sup_{\vv \in {\cal P}^{k,i}}\|\vv-\bm{c}^{k,i}\|_2
\leqslant 
\sup_{\vv \in {\cal P}^{k,i}}\|\vv-\bm{c}_{AC}^{k,i}\|_2,
$$
thus a ball centered at the MEB center provides 
a smaller cover of the polyhedron than the one centered at the AC. 
In the discussions below, we will exploit this geometric structure to
derive the theoretical convergence of
CPM whereas we cannot do this for one based on the AC.

\begin{figure}
    \centering  \includegraphics[width=0.5\linewidth]{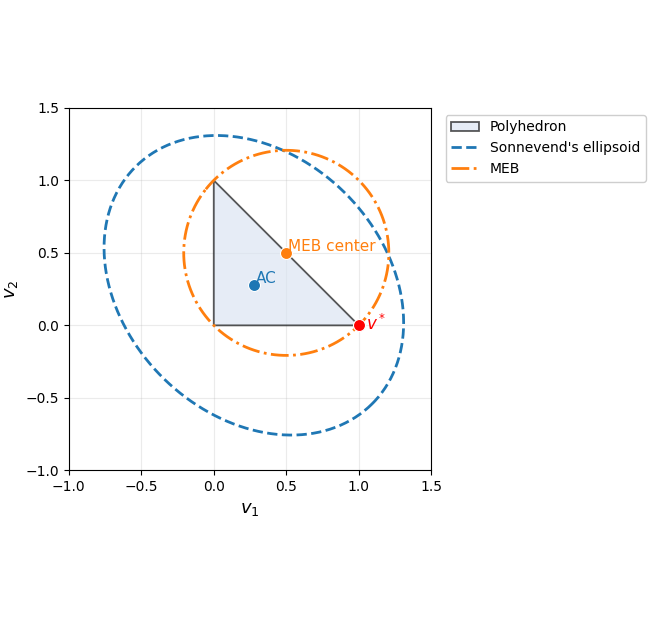}
    \caption{MEB  vs Sonnevend's ellipsoid}
    \label{fig:MOE}
\end{figure}


    

\begin{theorem}
    [Convergence of Algorithm~\ref{alg:VCPM_LU}]
    \label{theo:convergence_linear}
    Assume that the initial ambiguity set $\mathcal P^0\subset\R^{N-1}$ is a  bounded polyhedron containing the true partworth $\vv^*$.
    Let $\mathcal B^0$ denote the minimum enclosing ball of $\mathcal P^0$ with radius $r(\mathcal B^0)$. Then 
    for $i\in\{1,\dots,N-1\}$,
${\cal P}^{k+1}={\cal P}^{\,k,N-1}\subseteq \cdots \subseteq {\cal P}^{k,1}\subseteq {\cal P}^{k}\subset \mathcal B^k$
 and    
\bgeqn 
\label{eq:convergence_linear}
\| \vv - \vv^* \|_\infty\leqslant\| \vv - \vv^* \|_2 \leqslant\max\limits_{\vv',\vv''\in\mathcal{P}^{k,i}}\|\vv'-\vv''\|_2 \leqslant  2\, r(\mathcal B^0)\left(\sqrt{1-\frac{1}{N-1}}\right)^{\,k},
\qquad \forall\, \vv\in\mathcal P^{\,k,i}.
\edeqn 
In particular, $\mathcal P^{k}\to\{\vv^*\}$ as $k\to\infty$.


\end{theorem}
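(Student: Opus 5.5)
The plan is to prove a one-step contraction $r^{k+1}\leqslant r^k\sqrt{1-\frac{1}{N-1}}$ for the MEB radii, iterate it, and convert radii into diameters.

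\textbf{Containments and $\vv^*$.} The chain ${\cal P}^{k+1}={\cal P}^{k,N-1}\subseteq\cdots\subseteq{\cal P}^{k,1}\subseteq{\cal P}^k$ is immediate from \eqref{eq:update_rule}, since each step intersects with a half-space. The inclusion ${\cal P}^k\subset\mathcal B^k$ holds by the definition of the MEB. Under the no-error assumption, $\vv^*$ satisfies every inequality $\mu^{k,i}(\RR\vv^*)^\top(\x^{A^{k,i}}-\x^{B^{k,i}})\leqslant 0$. By induction, $\vv^*\in{\cal P}^{k,i}$ for all $k,i$. This gives the first two inequalities in \eqref{eq:convergence_linear} for any $\vv\in{\cal P}^{k,i}$: the bound $\|\cdot\|_\infty\leqslant\|\cdot\|_2$ is trivial, and $\|\vv-\vv^*\|_2\leqslant\diam{\cal P}^{k,i}$ holds because both points lie in ${\cal P}^{k,i}$.

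\textbf{Identifying the cuts.} Next I identify each cut geometrically. Let $\Delta=\x^{A^{k,i}}-\x^{B^{k,i}}$ be the query from Theorem~\ref{thm:find_query}. The map $\vv\mapsto(\RR\vv)^\top\Delta=\sum_{j<N}v_j(\Delta_j-\Delta_N)+\Delta_N$ is affine. By \eqref{eq:def_x} it equals exactly $(\bal^{k,i})^\top(\vv-\cc^k)$. Hence the retained half-space is $\{\vv:\ \sigma_i(\bal^{k,i})^\top(\vv-\cc^k)\geqslant0\}$ with $\sigma_i:=-\mu^{k,i}\in\{\pm1\}$.

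\textbf{One-step contraction.} Combining with ${\cal P}^k\subset\mathcal B(\cc^k,r^k)$ gives
\[
{\cal P}^{k+1}\subseteq\{\vv\in\mathcal B(\cc^k,r^k):\ \sigma_i(\bal^{k,i})^\top(\vv-\cc^k)\geqslant0,\ i=1,\dots,N-1\}.
\]
Consider the orthogonal change of variables $\bm w=\bm U^\top(\vv-\cc^k)$, where $\bm U$ has columns $\sigma_i\bal^{k,i}$. This is an isometry, by the orthonormality from Step~2 of Algorithm~\ref{alg:VCPM_LU}. It maps the ball to $\mathcal B(\bm 0,r^k)$ and the set above onto the nonnegative orthant piece $\mathcal O$ of Proposition~\ref{prop:C_radius-reduction}. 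Isometries preserve minimum enclosing balls, so by Proposition~\ref{prop:C_radius-reduction} the right-hand set lies in a ball of radius $r^k\sqrt{1-1/(N-1)}$. Since the MEB of a subset has radius no larger than any enclosing ball of it, $r^{k+1}\leqslant r^k\sqrt{1-1/(N-1)}$. Induction then gives $r^k\leqslant r(\mathcal B^0)\bigl(\sqrt{1-1/(N-1)}\bigr)^k$.

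\textbf{Conclusion.} Since ${\cal P}^{k,i}\subseteq{\cal P}^k\subset\mathcal B^k$, any two points of ${\cal P}^{k,i}$ are within distance $2r^k$ of each other. This yields the last inequality in \eqref{eq:convergence_linear}. Letting $k\to\infty$ forces $\diam{\cal P}^k\to0$; together with $\vv^*\in{\cal P}^k$, this gives ${\cal P}^k\to\{\vv^*\}$.

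The main point needing care is the cut identification: each query's half-space must be exactly a half-space bounded by $\mathcal H^{k,i}$ through $\cc^k$, with no rescaling or sign ambiguity. I would verify it directly by expanding $(\RR\vv)^\top\Delta$ with \eqref{eq:query_A}–\eqref{eq:query_B}. The reduction of an arbitrary sign pattern to the single orthant in Proposition~\ref{prop:C_radius-reduction} is then handled by the sign-flipped rotation $\bm U$. The case $N=2$ is degenerate for the Proposition ($N\geqslant3$ is assumed there), but there the stated rate factor is $0$ and the single cut gives a half-interval, whose MEB radius is $r^k/2$. That case therefore needs a separate remark rather than the stated bound.
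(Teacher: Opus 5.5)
Your proposal is correct and follows the paper's proof essentially step for step: nestedness and $\vv^*\in\mathcal P^{k,i}$ from the update rule and error-free responses, containment of $\mathcal P^{k+1}$ in one (rotated) orthant of $\mathcal B^k$, the one-step contraction $r(\mathcal B^{k+1})\leqslant\sqrt{(N-2)/(N-1)}\,r(\mathcal B^k)$ from Proposition~\ref{prop:C_radius-reduction}, induction, and the diameter bound $2r(\mathcal B^k)$. Your explicit check that $(\RR\vv)^\top\Delta=(\bal^{k,i})^\top(\vv-\cc^k)$, together with the signed orthogonal change of variables, makes rigorous what the paper compresses into ``by rotation''; your observation that the stated bound fails for $N=2$ (so $N\geqslant3$, as in Proposition~\ref{prop:C_radius-reduction}, must be assumed) is a correct caveat that the paper leaves implicit.
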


\begin{remark} 
\label{Rem:CPM-V-convergence}
Theorem~\ref{theo:convergence_linear} establishes the convergence of the CPM algorithm, and Theorem~\ref{thm:find_query} guarantees that, at each iteration, there exist queries corresponding to the prescribed cuts. 
The main difference between the CPM approach and the existing polyhedral method lies in 
how the queries/cuts are constructed.
CPM designs queries based on prespecified, geometrically defined coordinate‑wise cuts, whereas the polyhedral method first obtains queries by solving the optimization problems in \eqref{eq:original_PM} and then forms a cut according to \eqref{eq:observe-response}–\eqref{eq:poly-cut-market}. As noted earlier, the latter’s query‑generation mechanism 
does not guarantee perpendicularity between consecutive separating planes. Consequently, it cannot ensure a fixed reduction in the size of the polyhedron at each iteration despite excellent empirical convergence \cite{saure2019ellipsoidal}.
A potential drawback of CPM is that it requires $N-1$ cuts at each major iteration, which may lead to 
a lower overall average 
reduction per cut.


Note also that the convergence rate in the last inequality of \eqref{eq:convergence_linear} relies on the MEB-based radius reduction, but not on the specific choice of the orthonormal basis. It is possible to replace MEB with an MOE or a minimum-volume enclosing ellipsoid \citep{damla2008linear} to guide the construction of coordinate-wise cuts. We expect that such choices may affect the upper bound in \eqref{eq:convergence_linear}, but not necessarily the rate of shrinkage of the polyhedra. A detailed comparison of different ellipsoid-guided algorithm is left for future research.

\end{remark}


Theorem \ref{theo:convergence_linear}
allows us to derive the complexity of the CPM procedures in terms of 
the number of queries required to reduce the size of 
the initial ambiguity set/polyhedron ${\cal P}^0$ to a specified 
precision. The next corollary states this.

\begin{corollary}[Maximum number of queries]
\label{cor:sample-complex}
With at most
$(N-1)\left\lceil\frac{2[\log\epsilon-\log2r(\B^0)]}{\log(N-2)-\log(N-1)}\right\rceil$
queries, 
the maximum error is bounded by $\epsilon$, i.e.,
$\max\limits_{\vv\in\PP^{k,i}} \|\vv-\vv^*\|_2\leqslant \epsilon$.
\end{corollary}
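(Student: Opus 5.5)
The corollary follows by inverting the bound of Theorem~\ref{theo:convergence_linear}. For every $\vv\in\PP^{k,i}$, that theorem gives
\[
\|\vv-\vv^*\|_2\leqslant 2r(\B^0)\bigl(\sqrt{(N-2)/(N-1)}\bigr)^{k}.
\]
This bound depends only on the major iteration counter $k$, and it holds uniformly over the minor index $i$ because $\PP^{k,i}\subseteq\PP^k$. So it suffices to find the smallest $k$ with $2r(\B^0)\bigl((N-2)/(N-1)\bigr)^{k/2}\leqslant\epsilon$, and then count the queries used to reach major iteration $k$. Since Algorithm~\ref{alg:VCPM_LU} spends exactly $N-1$ queries per major iteration, reaching $\PP^{k}=\PP^{k-1,N-1}$ costs $(N-1)k$ queries.

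The steps are as follows.

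First, take logarithms of the target inequality:
\[
\log 2r(\B^0)+\tfrac{k}{2}\bigl[\log(N-2)-\log(N-1)\bigr]\leqslant\log\epsilon .
\]
Since $N\geqslant3$, the factor $\log(N-2)-\log(N-1)$ is strictly negative. Dividing by it reverses the inequality, which gives
\[
k\geqslant \frac{2[\log\epsilon-\log 2r(\B^0)]}{\log(N-2)-\log(N-1)}.
\]
Taking $k^*$ to be the ceiling of the right-hand side therefore suffices.

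Second, handle the degenerate cases. If $\epsilon\geqslant 2r(\B^0)$, the numerator is nonpositive and the right-hand side is at most $0$, so no query is needed. This is consistent, because the diameter of $\PP^0$ is already at most $2r(\B^0)\leqslant\epsilon$. In this case one reads the ceiling as $\max\{0,\cdot\}$.

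Third, use monotonicity. Because $\PP^{k+1}\subseteq\PP^{k,i}\subseteq\PP^k$, once $\PP^{k^*}$ meets the precision, every later set meets it too. The error is thus at most $\epsilon$ after at most $(N-1)k^*$ queries, as claimed.

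There is essentially no obstacle beyond bookkeeping. The only points needing care are the sign flip when dividing by the negative logarithm and the identification of the query count with $(N-1)k$ at the end of major iteration $k$. I would also note that $\epsilon$ should be compared with the diameter bound $2r$ rather than the radius. This is consistent with the factor $2r(\B^0)$ that appears in the stated expression.
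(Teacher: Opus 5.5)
Your proposal is correct and follows essentially the same route as the paper. You invert the bound $2r(\B^k)\leqslant 2r(\B^0)\bigl(\sqrt{(N-2)/(N-1)}\bigr)^k$ from Theorem~\ref{theo:convergence_linear}, flip the inequality when dividing by the negative $\log(N-2)-\log(N-1)$, multiply the resulting number of major iterations by the $N-1$ queries each one uses, and conclude via $\PP^{k,i}\subset\B^k$. Your treatment of the case $\epsilon\geqslant 2r(\B^0)$ and the nestedness remark are harmless additions; just note that the bound on $\PP^k$ itself comes from $\PP^k\subset\B^k$ and the radius recursion in the theorem's proof, not from the inclusion $\PP^{k,i}\subseteq\PP^k$.
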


This bound on the error ensures the consistency of the signs of the components of
$\vv$ with
the signs of the ground-truth ($\vv^*$), a metric 
to be used to examine the quality/precision of $\vv$ as an estimate of $\vv^*$.

\begin{corollary}[{\color{black}
Maximum number of queries for partworth sign correctness}]\label{cor:sign correctness}
Let $v_{\min}^*:=\min\limits_{i\in\{1,\ldots,N\}}\{|v_i^*|:v^*_N=1-\sum_{i=1}^{N-1}v^*_i\}$.
With at most
$k=\left\lceil\frac{2
[\log v_{\min}^*-\log2(N-1)r(\B^0)]}{\log(N-2)-\log(N-1)}\right\rceil$ major iterations (equivalently, $k(N-1)$ queries), 
it is guaranteed that
$v_iv_i^*\geqslant0$, $i=1,\cdots,N$, for any 
$\RR\vv,\;\vv\in\PP^{k+1}$, 
that is, the signs of the components of the partworths in the current ambiguity set 
are all consistent with the signs of the ground-truth.
\end{corollary}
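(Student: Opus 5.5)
The plan is to reduce the sign-correctness question to a componentwise error bound on the full recovered vector $\RR\vv$. Then I will use Theorem~\ref{theo:convergence_linear} to find how many major iterations make that error smaller than $v^*_{\min}$.

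\textbf{Step 1: error in the recovered $N$-th component.} For any $\vv\in\PP^{k+1}$ and $i\leqslant N-1$, we have $|v_i-v_i^*|\leqslant\|\vv-\vv^*\|_2$. The last component of $\RR\vv$ is $1-\bm e^\top\vv$, so
\[
|(\RR\vv)_N-(\RR\vv^*)_N| = |\bm e^\top(\vv-\vv^*)| \leqslant \|\vv-\vv^*\|_1 \leqslant (N-1)\|\vv-\vv^*\|_\infty \leqslant (N-1)\|\vv-\vv^*\|_2 .
\]
Every component of $\RR\vv-\RR\vv^*$ is therefore bounded in absolute value by $(N-1)\|\vv-\vv^*\|_2$. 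This factor $(N-1)$ is the one that appears in the stated bound.

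\textbf{Step 2: sign consistency.} If $|(\RR\vv)_i-(\RR\vv^*)_i|\leqslant v^*_{\min}\leqslant|(\RR\vv^*)_i|$, then $(\RR\vv)_i$ cannot have the opposite strict sign to $(\RR\vv^*)_i$, so $(\RR\vv)_i(\RR\vv^*)_i\geqslant0$. The case $v_i^*=0$ is trivial. It therefore suffices to guarantee
\[
(N-1)\max_{\vv\in\PP^{k+1}}\|\vv-\vv^*\|_2\leqslant v^*_{\min}.
\]

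\textbf{Step 3: iteration count.} By Theorem~\ref{theo:convergence_linear} applied to $\PP^{k+1}=\PP^{k,N-1}$, the left side is at most $2(N-1)r(\B^0)\bigl(\sqrt{(N-2)/(N-1)}\bigr)^{k}$. It suffices to have
\[
\tfrac{k}{2}\bigl[\log(N-2)-\log(N-1)\bigr]\leqslant \log v^*_{\min}-\log 2(N-1)r(\B^0).
\]
Dividing by the negative quantity $\log(N-2)-\log(N-1)$ reverses the inequality. This yields $k\geqslant 2[\log v^*_{\min}-\log2(N-1)r(\B^0)]/[\log(N-2)-\log(N-1)]$, and taking the ceiling gives the stated $k$. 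If the right side is nonpositive, meaning the initial set is already small enough, then $k=0$ works trivially.

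\textbf{Main obstacle.} The only delicate point is the index bookkeeping. The bound in Theorem~\ref{theo:convergence_linear} is stated for $\PP^{k,i}$ with exponent $k$, and $\PP^{k+1}=\PP^{k,N-1}$, so the same exponent $k$ applies to $\PP^{k+1}$. This is what makes "$k$ major iterations (equivalently $k(N-1)$ queries)" match the claim about $\PP^{k+1}$. Beyond that, I need to check the sign flip when dividing by the negative logarithm and to assume $v^*_{\min}>0$ for the logarithm to be defined. If $v^*_{\min}=0$, the statement is vacuous or must be read with the convention that a zero component is trivially sign-consistent.
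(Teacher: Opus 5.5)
Your proof is correct and follows essentially the same route as the paper: bound $|v_N-v_N^*|$ by $(N-1)\|\vv-\vv^*\|_\infty$, invoke the exponent-$k$ bound of Theorem~\ref{theo:convergence_linear} for $\PP^{k+1}=\PP^{k,N-1}$, and choose $k$ so that $2(N-1)r(\B^0)\bigl(\sqrt{(N-2)/(N-1)}\bigr)^k\leqslant v^*_{\min}$. One small bookkeeping correction: $\PP^{k,N-1}$ is reached after $(k+1)(N-1)$ queries, not $k(N-1)$, so the ``$k(N-1)$ queries'' reading is instead justified by $\PP^{k}\subset\B^k$ with $r(\B^k)\leqslant\bigl(\sqrt{(N-2)/(N-1)}\bigr)^k r(\B^0)$ (from the theorem's proof), which gives the same conclusion already for $\PP^k\supseteq\PP^{k+1}$.
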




Corollary~\ref{cor:sign correctness} characterizes the sample complexity required to achieve \underline{partworth sign correctness}. 
This metric is important in practice. For example, when a firm designs a new product, correctly identifying the sign of each partworth component provides a direct indication of whether an attribute should be increased or decreased. 
If the sign is inferred incorrectly, the resulting product design may move in the wrong direction and therefore fail to attract customers.
Moreover, Corollary~\ref{cor:sign correctness} shows that the sample complexity for sign correctness depends on the absolute magnitudes of the DM's true partworth components. 
In particular, if the DM has a pronounced preference for every attribute, so that $v_{\min}^*$ is relatively large, then fewer queries are needed to identify the signs correctly. 
By contrast, if the DM has only weak preferences on some attributes, so that $v_{\min}^*$ is close to $0$, then more queries are required. 
This is intuitive, because it is harder to distinguish the correct sign of a component when its true value is close to zero.


\subsection{An illustrative example\label{Sec:Multi_example}}

In this section, we present a simple academic example to illustrate the cutting process of
the coordinate-wise polyhedral method.
We simulate a virtual decision maker (DM) with true partworth
$\RR\vv^*=(0.2,0.5,0.3)^\top$, corresponding to the linear utility function
$u^*(\x)=0.2x_1+0.5x_2+0.3x_3.$
The initial uncertainty set is chosen as
$$
\mathcal{P}^0=\{\vv\in\R^2 \mid v_1\geqslant 0,\ v_2\geqslant 0,\ v_1+v_2\leqslant 1\}.
$$
\paragraph{Major iteration $k=0$.}
The MEB center of $\mathcal{P}^0$ is
$\cc^0=(0.5,0.5)^\top$, with lifted vector $\RR\cc^0=(0.5,0.5,0)^\top$.
We identify the basis based on MOE. Since the corresponding minimum outer ellipsoid (MOE) is a Euclidean ball,
we specify the first normal vector as
$\bal^{0,1}=(-0.78,\,0.62)^\top$, yielding the cutting plane
$\mathcal{H}^{0,1}=\{\vv\in\R^2 \mid (-0.78,0.62) [\vv-(0.5,0.5)^\top] = 0\}.$
Solving \eqref{eq:def_x} gives the attribute difference
$\Delta\x=(-0.7,\,0.7,\,0.08)^\top$.
One feasible query realizing this separating hyperplane is
$\x^A=(0,0.7,0.08)^\top$ and
$\x^B=(0.7,0,0)^\top$.


The second cut in this major iteration is orthogonal to the first.
The normal vector is
$\bal^{0,2}=(0.62,\,0.78)^\top$, yielding the hyperplane
$\mathcal{H}^{0,2}=\{\vv\in\R^2 \mid (0.62,0.78) [\vv-(0.5,0.5)^\top] = 0\}.$
Solving \eqref{eq:def_x} produces
$\Delta\x=(-0.08,\,0.08,\,-0.7)^\top$.
A feasible query is
$\x^A=(0,1,0)^\top$ and
$\x^B=(0.08,0.92,0.7)^\top$.
The cutting planes and the retained region after this major iteration are shown in $\R^2$ in Figure~\ref{fig:major1}(a) and the corresponding cutting planes in $\RR\vv\in\R^3$ are shown in Figure~\ref{fig:major1}(b)-(c).
Given the DM’s truthful response according to $u^*$, the half-space
containing the true partworth is preserved.
After completing the two coordinate-wise cuts, the updated uncertainty set
$\mathcal{P}^{0,2}$ is defined by
$$
\bm{A}^{0,2}=\bigl((-1,0);(0,-1);(1,1);(0.78,-0.62);(0.62,0.78)\bigr)\in\R^{5\times2},\quad
\bm{b}^{0,2}=(0,0,1,0.082,0.702)^\top.
$$

\paragraph{Major iteration $k=1$.}
Let $\mathcal{P}^1=\PP^{0,2}$. The MEB center of $\mathcal{P}^1$ is
$\cc^1=(0.052,0.448)^\top$, with lifted vector $\RR\cc^1=(0.052,0.448,0.5)^\top$.
The longest axis of the MOE is approximately $(0.79,-0.61)^\top$,
and thus the first normal vector in this iteration is chosen to be $\bal^{1,1}=(0.61,0.79)^\top$.
The corresponding cutting plane is
$
\mathcal{H}^{1,1}=\{\vv\in\R^2 \mid (0.61,0.79) [\vv-(0.052,0.448)^\top] = 0\}.
$
Solving \eqref{eq:def_x} yields
$\Delta\x=(0.22,0.40,-0.39)^\top$,
with a feasible query given by
$\x^A=(1,1,0)^\top$ and
$\x^B=(0.78,0.60,0.39)^\top$.

The second cut is orthogonal to $\bal^{1,1}$, with normal vector
$\bal^{1,2}=(0.79,-0.61)^\top$.
Solving \eqref{eq:def_x} gives
$\Delta\x=(0.51,-0.19,0.12)^\top$,
and one feasible query is
$\x^A=(1,0,1)^\top$ and
$\x^B=(0.49,0.19,0.88)^\top$.
After completing the two coordinate-wise cuts in this iteration,
the uncertainty set becomes $\mathcal{P}^{1,2}$, characterized by
\vspace{-5mm}
\begin{align*}
    &\bm{A}^{1,2}=\bigl((-1,0);(0,-1);(1,1);(0.78,-0.62);(0.62,0.78);
(0.61,0.79);(0.79,-0.61)\bigr)\in\R^{7\times2},\\ 
&\bm{b}^{1,2}=(0,0,1,0.082,0.702,-0.384,0.236)^\top.
\end{align*}


The cutting planes and the retained region after this major iteration are shown in $\R^2$ in Figure~\ref{fig:major2}(a) and the corresponding cutting planes in $\RR\vv\in\R^3$ are shown in Figure~\ref{fig:major2}(b)-(c).


Figure~\ref{fig:comparison}(a) summarizes the overall cutting process induced by CPM. For reference, we also apply the polyhedral method (POLY) by \citet{toubia2004polyhedral} to the same setting, and the corresponding cutting process is shown in Figure~\ref{fig:comparison}(b).
These figures illustrate the structural differences between the cutting mechanisms of the two methods.

As illustrated in Figure~\ref{fig:comparison}(a), CPM organizes cuts into major iterations, within each of which a group of coordinate-wise cuts is performed. The cutting planes in the same major iteration are mutually orthogonal and all pass through the current MEB center.
This design leads to a balanced contraction of the uncertainty set across all dimensions of the partworth vector and shrinks the polyhedron in a geometrically regular manner.
By contrast, Figure~\ref{fig:comparison}(b) shows that POLY generates a sequence of cuts that primarily shorten the feasible region along the direction of the longest axis at each step without angular constraints on successive cutting planes.

\begin{figure}[!htbp]
    \centering
    \begin{subfigure}[t]{0.32\textwidth}
        \centering
        \includegraphics[width=\textwidth]{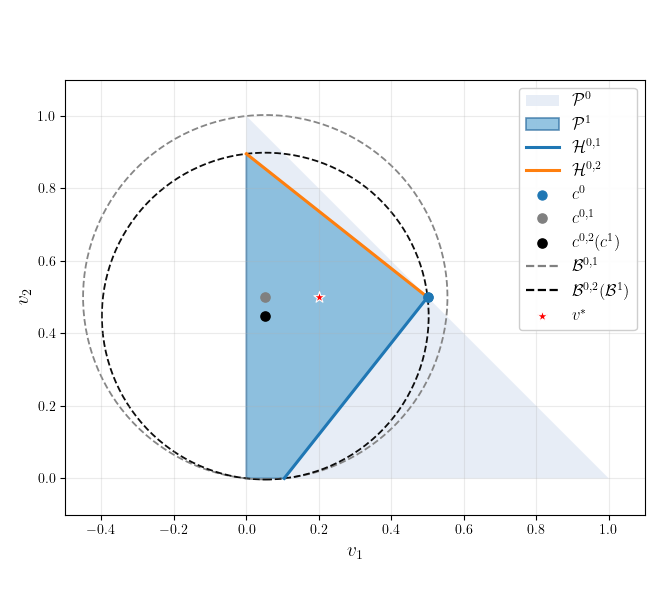}
        \caption{\label{fig:major1_2d}}
    \end{subfigure}
    \begin{subfigure}[t]{0.32\textwidth}
        \centering
        \includegraphics[width=\textwidth]{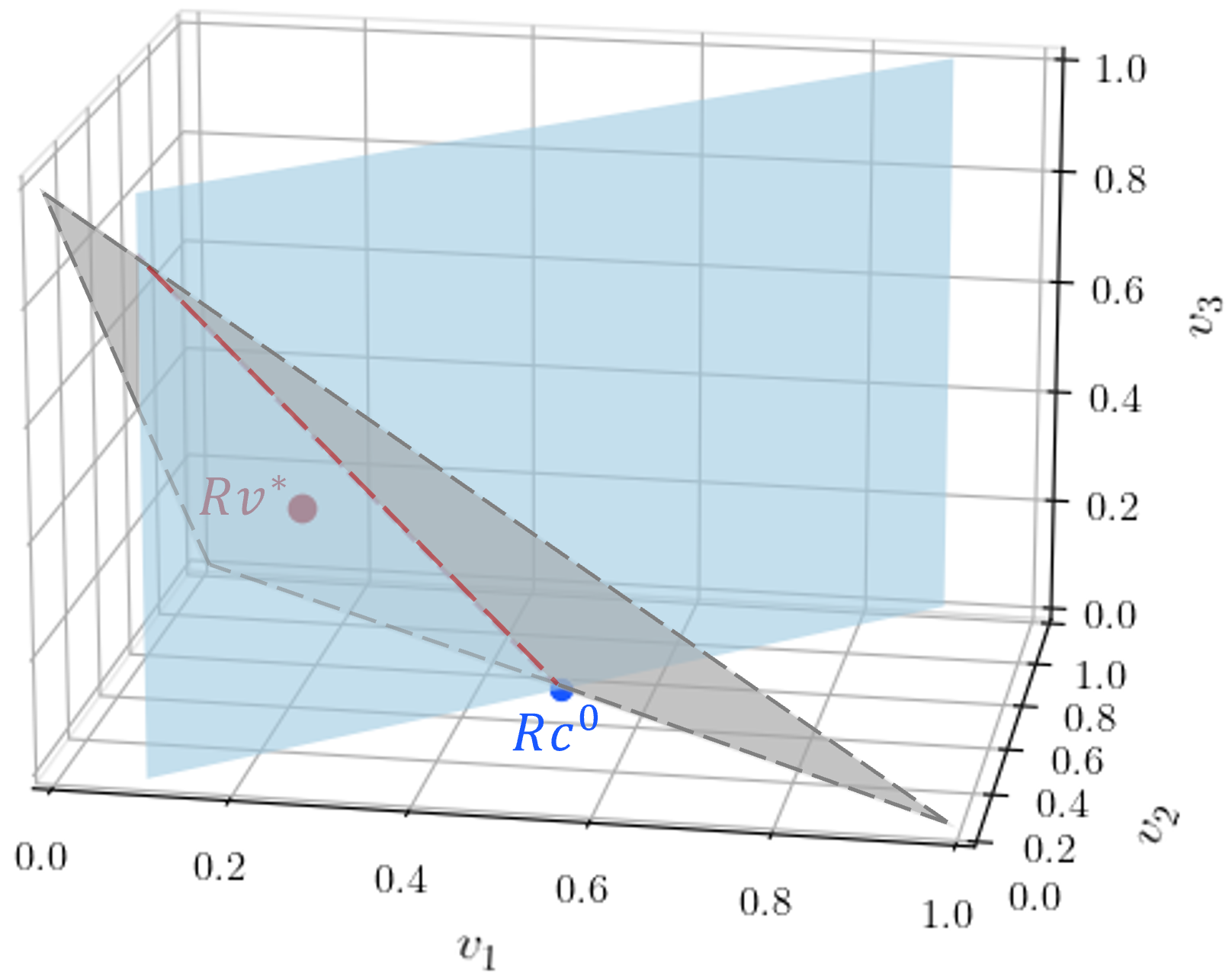}
        \caption{\label{fig:major_cut1_3d}}
    \end{subfigure}
    \begin{subfigure}[t]{0.32\textwidth}
        \centering
        \includegraphics[width=\textwidth]{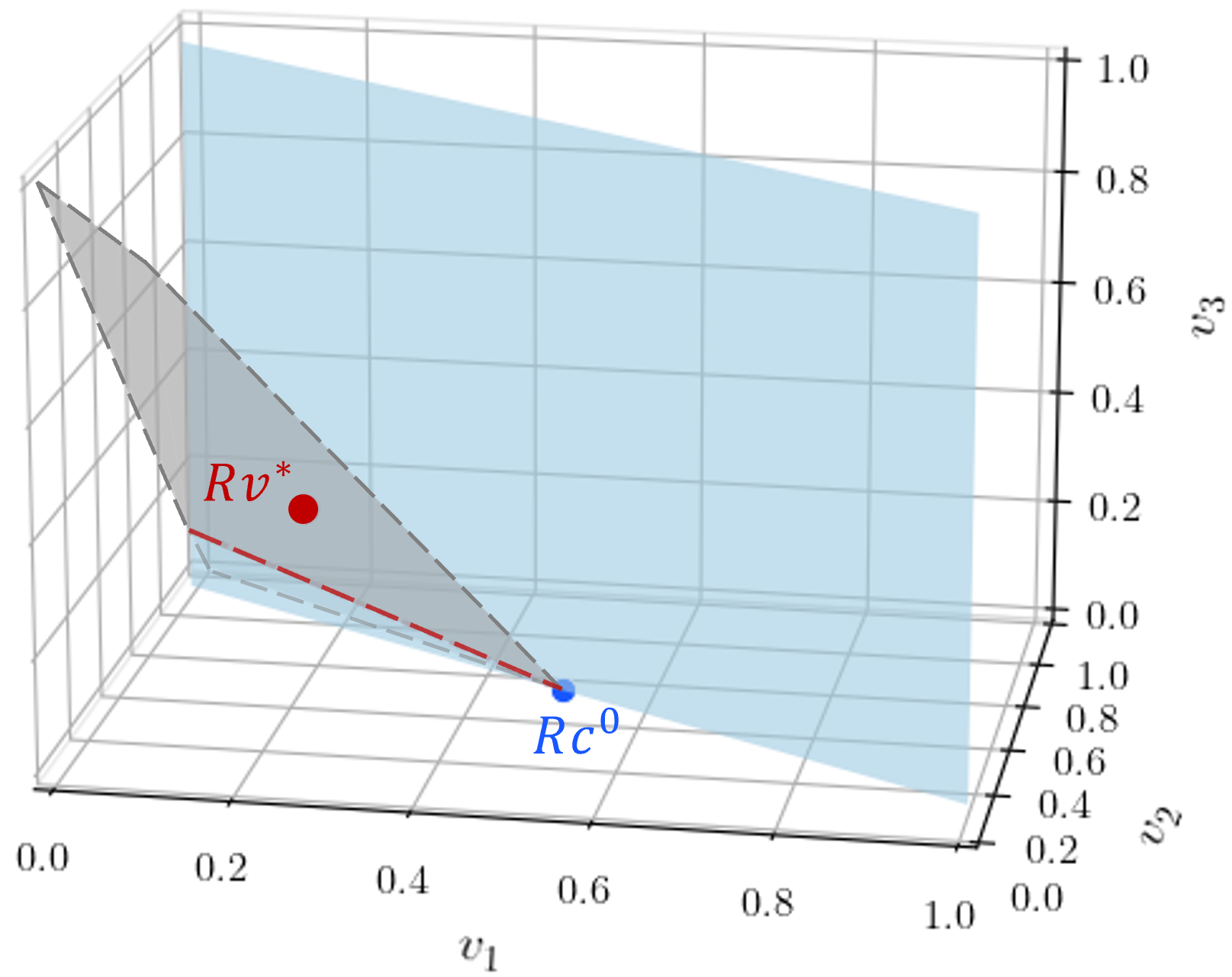}
        \caption{\label{fig:major_cut1.1_3d}}
    \end{subfigure}
    \caption{{\footnotesize Illustration of MEBs and cutting planes in major iteration one.    
    (a) MEBs after each coordinate-wise cut
    in $\R^2$. 
    (b) Cutting plane
    in $\R^3$ after one coordinate-wise cut.
    (c)  Cutting plane
    in $\R^3$ after second coordinate-wise cut.
    \label{fig:major1}}}
    \end{figure}
    
    \begin{figure}[!htbp]   
    \begin{subfigure}[t]{0.32\textwidth}
        \centering
        \includegraphics[width=\textwidth]{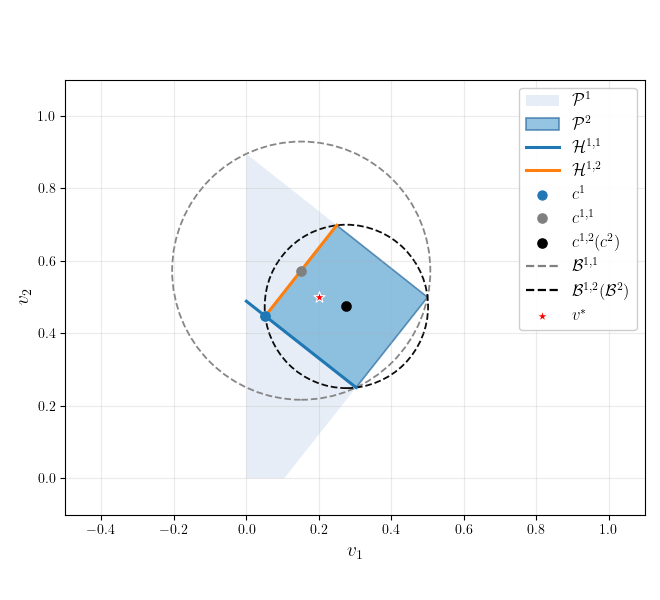}
        \caption{\label{fig:major2_2d}}
    \end{subfigure}\hfill  
    \begin{subfigure}[t]{0.32\textwidth}
        \centering
        \includegraphics[width=\textwidth]{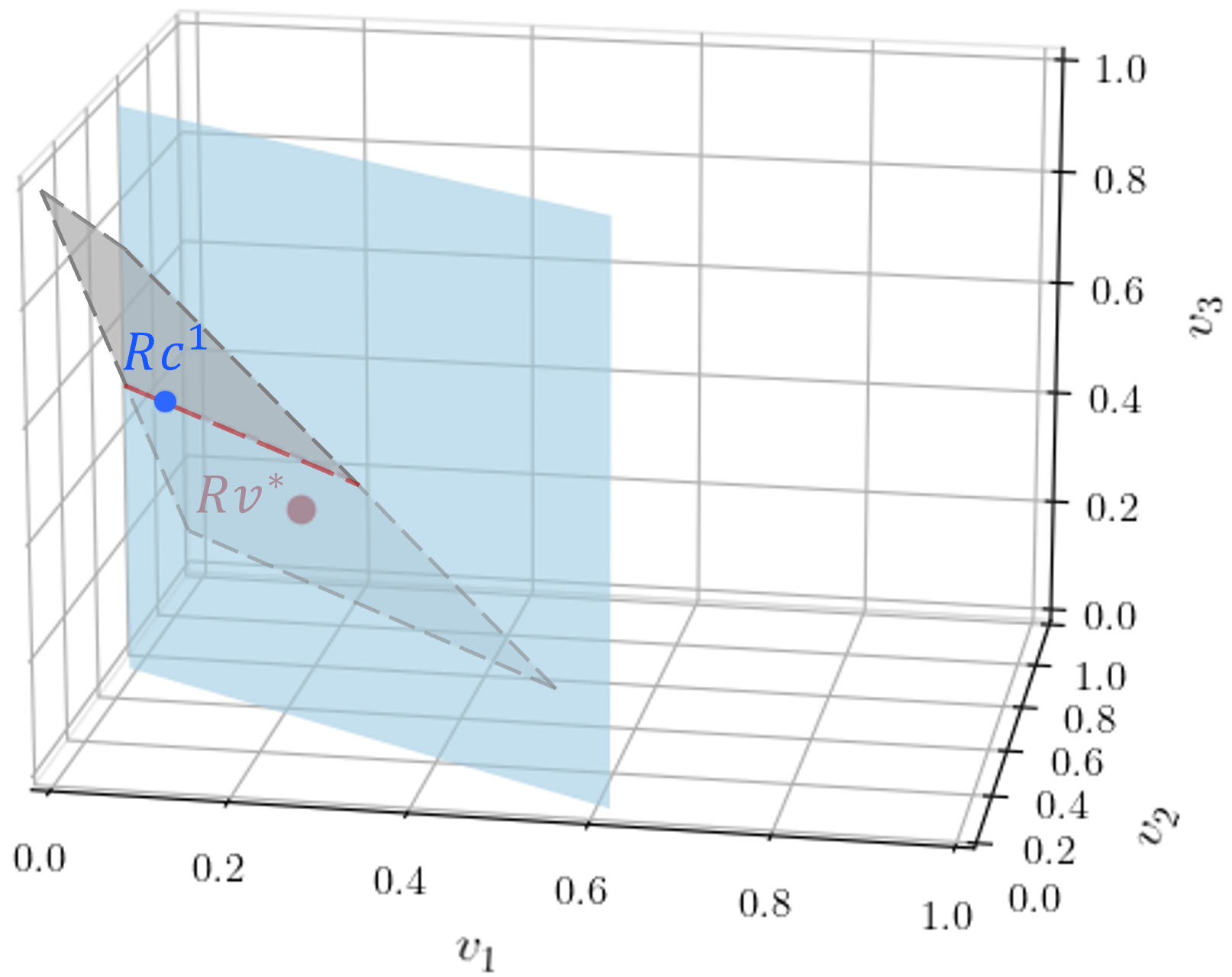}
        \caption{\label{fig:major_cut2_3d}}
    \end{subfigure}\hfill  
    \begin{subfigure}[t]{0.32\textwidth}
        \centering
        \includegraphics[width=\textwidth]{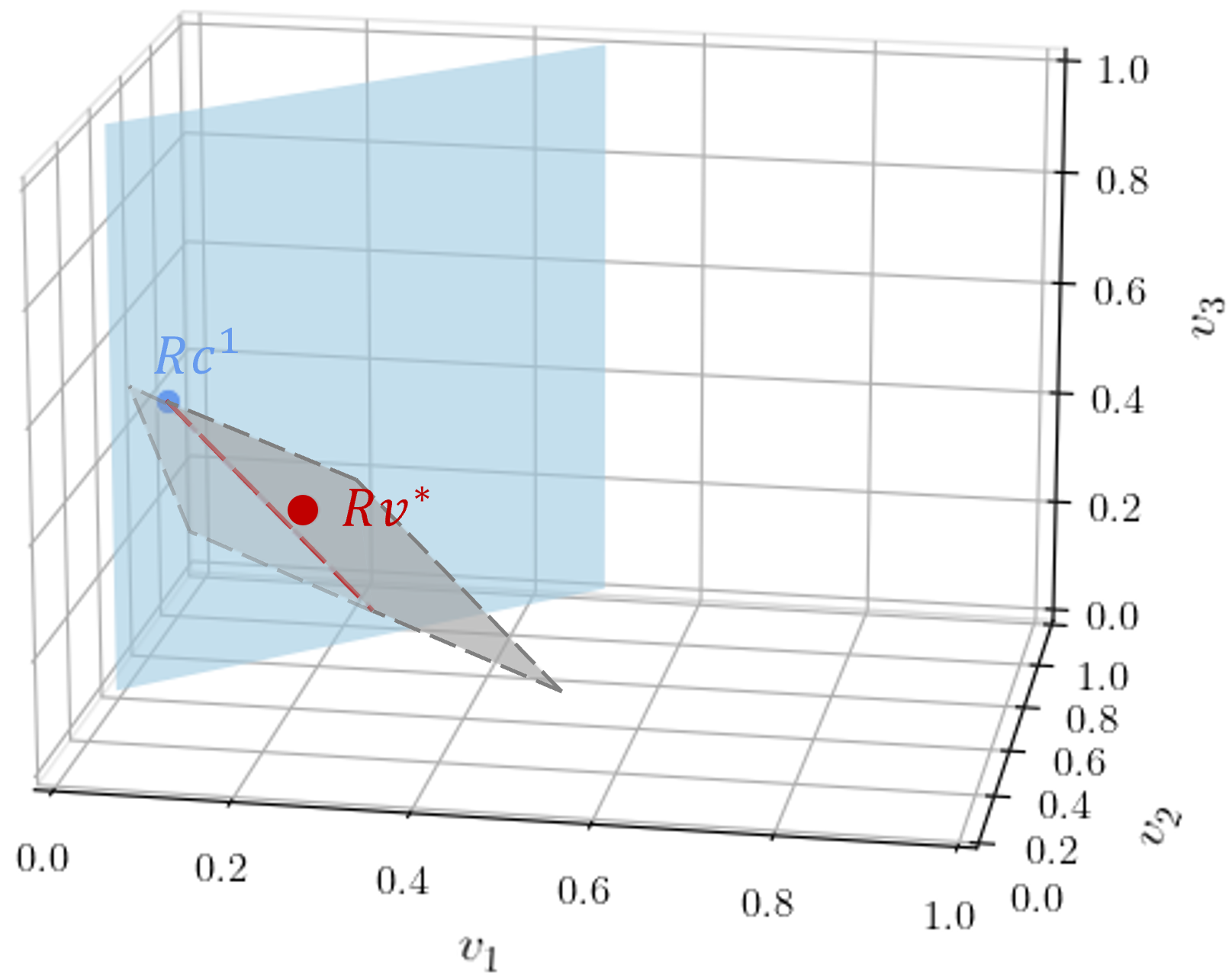}
    \caption{\label{fig:major_cut2.1_3d}}
    \end{subfigure}\hfill  
   \caption{{\footnotesize Illustration of MEBs and cutting planes in major iteration two.    
    (a) MEBs after each coordinate cut 
    in $\R^2$. 
    (b) Cutting plane
    in $\R^3$ after one coordinate-wise cut.
    (c)  Cutting plane
    in $\R^3$ after second coordinate-wise cut.
    \label{fig:major2}}}
\end{figure}

\begin{figure}[!t]
    \centering
    \begin{subfigure}[t]{0.48\textwidth}
        \centering
        \includegraphics[width=0.8\textwidth]{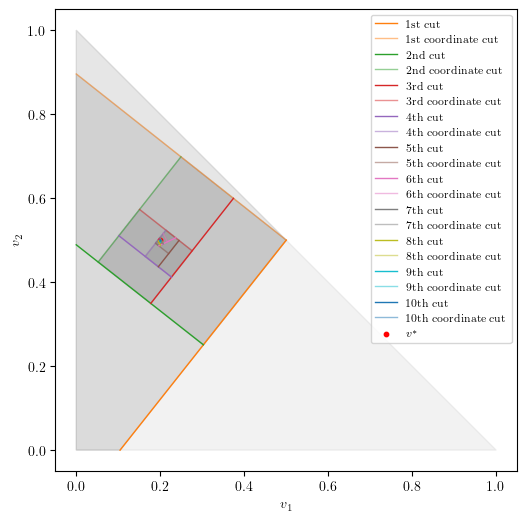}
        \caption{CPM \label{fig:CPM_all}}
    \end{subfigure}  
    \begin{subfigure}[t]{0.48\textwidth}
        \centering
        \includegraphics[width=0.8\textwidth]{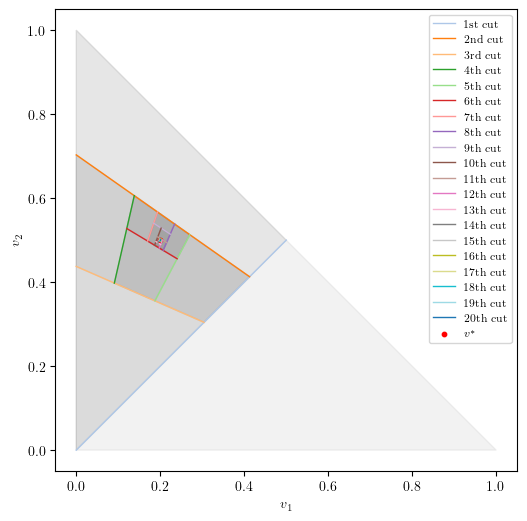}
        \caption{POLY\label{fig:POLY_all}}
    \end{subfigure}  
    \caption{The cutting process\label{fig:comparison}} 
\end{figure}
\section{Univariate piecewise-linear utility function}
\label{sec:univariate}

In the domain of individual decision-making under risk, preferences are often modeled using nonlinear univariate utility or distortion functions. One of the foundational frameworks in this area is the von Neumann–Morgenstern expected utility theory (EUT) \citep{von1947theory}, which postulates that preferences between lotteries can be captured by the expected value of a utility function defined over outcomes. Building on this, a range of generalized models have been developed. For instance, Yaari’s dual theory \citep{yaari1987dual}, rank-dependent expected utility (RDEU) \citep{quiggin1982theory,wakker1994separating}, and cumulative prospect theory (CPT) \citep{tversky1992advances} allow for nonlinear transformations of both outcomes and probabilities. These models often incorporate univariate nonlinear utility and/or distortion functions to represent individual attitudes toward risk, loss, and probability weighting flexibly. In practice, the true utility function/distortion function which captures the DM's preferences is often unknown. Consequently, various methods to elicit preferences have been proposed to obtain an approximate utility/distortion.
Existing elicitation methods can broadly be categorized into parametric approaches, which assume specific functional forms for utility or distortion functions (e.g., \citet{tversky1992advances,camerer1994nonlinear,hey1994investigating,tversky1995weighing}), and non-parametric approaches, which do not impose a fixed structure and instead estimate values pointwise from data (e.g., \citet{wakker1996eliciting,abdellaoui2000parameter,abdellaoui2007loss}). A common technique used in both settings involves presenting the decision maker with a sequence of comparisons between certain outcomes and probabilistic lotteries. One well-known method of this kind is the Becker-DeGroot-Marschak (BDM) procedure \citep{becker1964measuring}, which enables the identification of utility values at selected outcome levels. The elicited discrete points are typically connected through interpolation, resulting in a single approximated utility function.


\citet{zhang2025modified} 
seem to be the first to extend
the polyhedral cut method 
to elicit an approximate univariate nonlinear utility function. They do so by approximating the true unknown 
utility function 
through a class of piecewise-linear utility functions parameterized by the vector of increments of 
the linear pieces first, 
and then propose a modified polyhedral cut method 
in the space of the vector of parameters.
While the method shows promising empirical convergence results, it does not provide a deterministic guarantee of contraction for the ambiguity set. We next show how CPM can be adapted to
the increment space of univariate piecewise-linear utilities and how the resulting coordinate-wise
cuts lead to guarantees of convergence.




\subsection{Piecewise-linear utility function}



Let $(\Omega,{\cal F},\mathbb P)$ be a probability space and
${\cal L}^0$ denote 
the set of all random variables mapping from $\Omega$ to $\R$.
The preference functional based on the expected utility theory is defined by $V_u:{\cal L}^0\to \R$,
$V_u(\xi):=
\mathbb{E}_{\mathbb{P}}[u(\xi(\omega))]
$,
where $u:\R \to \R$ is a utility function.
In this section, we follow the strand of research by considering the case that 
a DM's true utility function is parametric and 
we develop a CPM to elicit the DM's preferences and use the elicited information to reduce the ambiguity of the 
true utility function. We make the following assumption.

\begin{assumption} The decision maker's preferences can be described by von Neumann-Morgenstern's expected utility theory and there exists a 
nondecreasing
piecewise-linear utility function (PLU) which characterizes the DM's preferences. The utility function is 
defined over interval 
$[\underline{x}, \overline{x}]$ and normalized with 
$u(\underline{x})=0$, $u(\overline{x})=1$.   
\end{assumption}




We specify the PLU as follows.
Let  ${\cal X}:=\{x_1,\cdots,x_{N+1}\}$ with $\underline{x}=x_1<\cdots<x_{N+1}=\overline{x}$ being fixed breakpoints, $N\geqslant3$, and $u(x)$ be
a PLU 
\begin{equation}
\label{eq:PLA_utility_org}
u(x):=(v_1,\cdots,v_N)^{\top} {\bm g}(x),\ \sum_{i=1}^Nv_i=1,
\end{equation}
where $\vv=(v_1,\ldots,v_{N-1})^\top\in\R_+^{N-1}$ is the vector of increments 
with $v_i=u(x_{i+1})-u(x_i)$ for $i=1,\cdots,N-1$ and $v_N$ is uniquely determined by $\vv$, 
and $\g:\R\mapsto \R^N$ is a basis feature mapping defined by
    \begin{equation}
\label{eq:g(x)-incre}
 {\bm g}(x_1)=0\quad\text{and}\quad{\bm g}(x)=\Big(\underbrace{1,\cdots,1}_{i-1},\frac{x-x_i}{x_{i+1}-x_i},\underbrace{0,\cdots,0}_{N-i}\Big)^\top
   \in \R^{N},\  \forall x\in (x_{i},x_{i+1}].
\end{equation}
Let $\RR:\R^{N-1}\rightarrow\R^N$ be
the recovery mapping defined in \eqref{map:recover}.
Then 
\begin{equation}
\label{eq:PLA_utility}
u(x):=(\RR\vv)^{\top} {\bm g}(x).
\end{equation}
Under formulation \eqref{eq:PLA_utility}, 
we can see that a PLU is uniquely characterized by 
the vector of increments $\vv$,
thereby transforming the problem of eliciting a PLU into the task of eliciting its corresponding increment vector. Consequently, the polyhedral ambiguity set is constructed over the space of increment vectors.

\subsection{Structure of lotteries
for pairwise comparisons}
We consider that each query presented to a DM consists of two lotteries. 
Let $\mathcal{S}_l$ denote the set of all 
feasible lotteries for pairwise comparison.
For  each lottery $A\in\mathcal{S}_l$, we use 
$\rr^A\in \R^{n+1}$ to denote the vector of $n+1$ outcomes and
$\p^A$ the vector of corresponding probabilities.
{This differs greatly from the lotteries in the existing preference elicitation approaches such as RUS, RRUS in \citep{armbruster2015decision} and the modified polyhedral method in \citep{zhang2025modified} where each lottery typically has two outcomes  or one outcome.}
In the forthcoming discussions, we will write $A$ and $(\rr^A,\p^A)$ for a lottery interchangeably depending on the context. With this convention,
we define set
\bgeqn
\label{eq:D_S_l}
\mathcal{D}_{\mathcal{S}_l}:=\big\{(\rr^A,\p^A)\mid A\in\mathcal{S}_l\big\},
\edeqn 
which is an alternative representation of $\mathcal{S}_l$.
 Without loss of generality, we assume that the outcomes of each lottery are ordered in strictly increasing magnitude, that is, $r_{i+1}>r_i$ for all $i=0,\ldots,n-1$.





\begin{definition}[Lotteries for pairwise comparison 
under EUT]\label{def:lottery}
A query 
to be designed for eliciting a DM's preferences via pairwise 
comparison comprises 
two lotteries $A$ and $B$ structured as follows: 
\begin{equation}
\label{eq:lotteries-A-B}
A =\{\rr^A,\p^A\}:= \{(r_i^A,p_i^A)\}_{i=0}^n,\quad \text{and}\quad 
B =\{\rr^B,\p^B\}:= \{(r_i^B,p_i^B)\}_{i=0}^n.
\end{equation}
Let  \begin{equation}\label{eq:G}
        \G(\bm{r},\bm{p}):=\sum_{i=0}^{n} p_i \g(r_i)= \left(\sum_{i=0}^{n} p_ig_1(r_i),\ldots,\sum_{i=0}^{n} p_ig_N(r_i)\right)^\top.
\end{equation}
$A$ is preferred to $B$, written $A \succeq B$, if and only if
\bgeqn 
\mathbb{E}[u({A})]=({\bm R}{\vv^*})^{\top}\G(\rr^A,\p^A)\geqslant ({\bm R}{\vv^*})^{\top}\G(\rr^B,\p^B) =\mathbb{E}[u({B})],
\edeqn 
where 
${\bm R}$ is a recovery mapping defined as in \eqref{map:recover} and
$\vv^*$ denotes the vector of increments
of the true  unknown piecewise-linear utility function.
\end{definition}

The definition implicitly assumes that there is no error in the process of preference elicitation.
To ease the exposition, we write $\G^A$ and $\G^B$ for $\G(\rr^A,\p^A)$ and $\G(\rr^B,\p^B)$ respectively. 
\begin{assumption}
\label{ass:initial_poly_PLU}
 There exists an initial polyhedral ambiguity set $\mathcal{P}^0=\{\vv\in\R^{N-1}_+\mid \bm{A}\vv\leqslant\bm{b},\ \bm{A}\in\R^{m\times (N-1)},\ \bm{b}\in \R^m\}$ constructed based on the prior information about the DM’s preferences, which contains the true value of increment vector $\vv^*$ of the DM's utility function. 
There is no response error in the elicitation process.
   \end{assumption}

     Let $\mathcal{H}=\{\vv\in\R^{N-1}\mid \bal^\top(\vv-\vv_0)=0\}$ be a given hyperplane passing through $\vv_0\in \R^{N-1}$. 
    $\mathcal{H}$ acts as a  separating hyperplane
     of query $(A,B)$ 
   if it satisfies 
    \begin{subequations}
        \begin{align}
             &{\RR\vv}^\top(\G^A-\G^B)=0,\ \forall \vv\in {\cal H},\label{eq:sep_def1_PLU}\\
             &\left[(\RR\vv^{+})^\top(\G^A-\G^B)\right]\left[(\RR\vv^{-})^\top(\G^A-\G^B)\right]
            \leqslant0,\ \forall \vv^+\in {\cal H}_{+},\ \vv^-\in {\cal H}_{-},\label{eq:sep_def2_PLU}\\
&{\cal H}_{+}=\{\vv\mid\bal^{\top}(\vv-\vv_0)\geqslant0\},\ {\cal H}_{-}=\{\vv\mid\bal^{\top}(\vv-\vv_0)\leqslant0\}.
\end{align}
\label{eq:sep_def_PLU}
\end{subequations}
\subsection{Modified polyhedral method}
\label{sec:zhang-modified}
\citet{zhang2025modified} 
propose a modified polyhedral method to elicit a decision maker's (DM's) nonlinear univariate utility function
through 
pairwise comparison
questionnaires.
They do so by
(a) approximating the true unknown nonlinear utility with a continuous piecewise-linear (PL) function parameterized 
by the vector of increments,  (b) 
designing adaptive risky–certain lottery queries that generate separating hyperplanes in the space of the increment vectors of piecewise-linear utility functions. 
Let $\mathcal{X}=\{x_1<\cdots<x_{N+1}\}\subset [\underline x,\bar x]$  be a fixed set of breakpoints. Following the discussions in the preceding sections, we can write 
the  piecewise-linear approximate (PLA) utility function as follows:
\bgeqn 
u_N(x) =(\RR\bm v)^\top \g(x),\quad
\text{where} \; \RR\vv\in\R_+^{N}\; \text{and} \;\bm e^\top(\RR \vv)=1.
\edeqn 
At iteration $k$, the query 
to be designed 
is composed of 
a risky lottery with two uncertain outcomes 
and one with a 
sure outcome,
$A^k=\{r_1^k,1-p^k;\ r_3^k\},B^k=r_2^k$.
Let 
\bgeqn 
\G^A(r_1,r_3,p)=(1-p)\g(r_1)+p\g(r_3),\qquad \G^B(r_2)=\g(r_2).
\edeqn 
Then
\begin{equation}
\label{eq:zhang-update}
\mathcal{P}^k
=\mathcal{P}^{k-1}\cap\big\{\bm v:\ \mu_k(\RR\bm v)^\top\bigl(\G^A(r_1^k,r_3^k,p^k)-\G^B(r_2^k)\bigr)\leqslant 0\big\},
\end{equation}
where $\mu_k\in\{1,-1\}$ (with $\mu_k=1$ if $B^k$ is preferred).
Let $\tilde{\bm c}^k$ be the analytic center of $\mathcal{P}^{k-1}$ and $\bm v_1^k,\bm v_2^k$ be the two intersections  between the longest axis of the Sonnevend’s ellipsoid
and the boundary of the polyhedron $\mathcal{P}^{k-1}$. The risky–certain pair is chosen by solving two optimization problems with a “budget” $D\in(0,1)$ measured at the center, that is, solve
\begin{align}
(r_1^k,r_3^k,p^k)
&\in \mathop{\arg\max}_{r_1 \leqslant r_3,\; p \in [0,1]}
\left\{
(\bm R\vv_2^k)^\top \G^A(r_1,r_3,p)
:
\;(\bm R\tilde{\cc}^k)^\top \G^A(r_1,r_3,p)\leqslant D
\right\}, \\[2mm]
r_2^k 
&\in \mathop{\arg\max}_{r_2\in[\underline{x},\bar{x}]}
\left\{
(\bm R\vv_1^k)^\top \G^B(r_2)
:
\;(\bm R\tilde{\cc}^k)^\top \G^B(r_2)\leqslant D
\right\}. 
\end{align}
for $A^k$ and $B^k$ respectively.
The structure of the 
optimization problems 
is similar to that in \citet{toubia2004polyhedral}.
However, they are highly non-convex, which 
poses challenges for solving these optimization problems. 
Moreover, because of the gap between the piecewise-linear approximation and the true utility, a cut computed under the piecewise-linear approximation may exclude the true increment vector, called direction error. Zhang et al.~tackle the issue
by 
augmenting the set of breakpoints after each question: $$
\mathcal{X}^k:=\mathcal{X}^{k-1}\cup\{r_1^k,r_2^k,r_3^k\},
$$
rebuilding $g_{X^k}(\cdot)$, and lifting all past constraints into the new basis (updating $\G^A,\G^B$ accordingly) before applying the fresh cut. 
{
This 
approach effectively addresses the direction error problem.}
 The modified method  generalizes polyhedral elicitation to shape-free nonlinear utilities through an increment-based piecewise-linear representation,  uses analytic-center/Sonnevend's ellipsoid to select informative, near-balanced queries, and repairs approximation-induced cut errors by adaptively enriching breakpoints.

\subsection{CPM for eliciting a true piecewise-linear utility function}

For piecewise-linear utilities (PLUs), the process of utility elicitation is analogous to that of linear utilities, and the CPM algorithm follows the same overall procedure. The key difference lies in the form of the queries: the specific structure of the expected basis mapping $\G$ under PLUs introduces additional challenges in questionnaire generation, which will be discussed in detail in Theorem \ref{theo:solution}. We first establish the CPM for eliciting PLU.


\vspace{2mm}

\begin{breakablealgorithm}\label{alg:VCPM_PLU}
    \caption{CPM for eliciting a piecewise-linear utility} 
\begin{flushleft}
\textbf{Input:} The set of breakpoints $\XX$ with $\vert\XX\vert=N+1$,
initial ambiguity set $\mathcal{P}^0\in\R^{N-1}$, iteration index $k = 0$, and precision $\epsilon$.
\end{flushleft}
\begin{itemize}[leftmargin=0pt,label={},itemsep=0.6ex]
\item\textbf{Step 1.}  Calculate the MEB center $\bm{c}^{k}$ and radius $r^k$.
\item\textbf{If} $r^k>\epsilon$ \textbf{do}
\item\textbf{Step 2.} 
Identify $\bal^{k,1},\bal^{k,2},\ldots,\bal^{k,N-1}\in\R^{N-1}$ which form a normalized orthogonal basis in $\R^{N-1}$.

\item\textbf{Step 3.} 
 \textbf{For} $i=1,\ldots,N-1$ \textbf{do}
    \begin{itemize}
     \item[-] Construct a pairwise comparison query $(A^{k,i},B^{k,i})$ with lottery pairs $\{\rr^A,\p^A\} ,
     \ \{\rr^B,\p^B\}$ defined as in \eqref{eq:lotteries-A-B}
     so that
     \begin{equation}\label{def:plane}
         {\cal H}^{k,i}=\{\vv\mid(\bal^{k,i})^\top(\vv-\bm{c}^k)=0\}
     \end{equation}
     acts as a separating hyperplane. 
    
     
   \item[-] Ask the DM to choose between $A^{k,i}$ and $B^{k,i}$.
Let $\mu^{k,i}=-1$ if $A^{k,i}$ is chosen,
i.e., $(\RR\vv^*)^\top \G^{A^{k,i}}\geqslant (\RR\vv^*)^\top \G^{B^{k,i}}$
and $\mu^{k,i}=1$ otherwise
(see Definition~\ref{def:lottery}). 
         \item[-] Update the polyhedron $\mathcal{P}^{k,i-1}$:
             \bgeqn 
             \mathcal{P}^{k,i}:=\mathcal{P}^{k,i-1}\cap \{\vv\mid
              \mu^{k,i}(\RR\vv)^\top(\G^{A^{k,i}}-\G^{B^{k,i}})\leqslant0\}.
              \edeqn 
    \end{itemize}
    \item\textbf{Step 4.} Set $\PP^{k+1}:=\PP^{k,N-1}$ and $k:=k+1$. Go back to Step 1.
   
\end{itemize}
\begin{flushleft}
\textbf{Output:} $\PP^{k}$.
\end{flushleft}
\end{breakablealgorithm}
\vspace{2mm}
The structure
of Algorithm~\ref{alg:VCPM_PLU} 
is similar to that of Algorithm~\ref{alg:VCPM_LU} despite 
the cut hyperplanes are constructed in different spaces.
The latter difference leads to different ways to construct pairwise comparison 
queries  for a given separating hyperplane.
In multivariate linear utility function case,
$\Delta\x=\x^{A}-\x^{B}$ (see Theorem~\ref{thm:find_query}). 
In the univariate 
piecewise-linear utility case, the separating hyperplane 
is determined by the difference of expected
basis vectors $\Delta\G:=\G^{A}-\G^{B}$ of two lotteries.
Algorithm~\ref{alg:GQ} provides a constructive procedure (see Proposition~\ref{prop:find_query_G} and Theorem~\ref{theo:solution}) to generate such lottery pairs
for a prescribed hyperplane. 



\begin{proposition}[Sufficient conditions for design of queries]
\label{prop:find_query_G}
Let ${\cal H}=\{\vv|\bal^\top(\vv-\bm{c})=0\}$ be a given hyperplane. There exists a 
query $(A, B)$ structured  as in Definition~\ref{def:lottery} 
with $\G^A=(G_1^A,\ldots,G^A_N),\ \G^B=(G_1^B,\ldots,G^B_N)$, where
\begin{subequations}
    \begin{align}
        G^A_N-G^B_N&=-\gamma\sum^{N-1}_{i=1}\alpha_i c_i,\\
        G^A_i-G^B_i&=(G^A_N-G^B_N)+\gamma\alpha_i=\gamma\left(\alpha_i-\sum^{N-1}_{i=1}\alpha_i c_i\right),
    \end{align}
    \label{eq:def_G}
\end{subequations}
and $\gamma>0$ is a relaxation factor, such that  
$\G^A$ and $\G^B$ 
satisfy \eqref{eq:sep_def_PLU}.

\end{proposition}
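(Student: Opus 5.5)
The plan is to reduce the statement to a linear-algebra identity in the increment space, mirroring Theorem~\ref{thm:find_query}. Set $\Delta\G:=\G^A-\G^B$ and prescribe its components by \eqref{eq:def_G}. For any $\vv\in\R^{N-1}$ I will compute $(\RR\vv)^\top\Delta\G$ directly. Since $\RR\vv=(v_1,\ldots,v_{N-1},1-\sum_{j}v_j)$,
\[
(\RR\vv)^\top\Delta\G=\sum_{i=1}^{N-1}v_i(\Delta G_i-\Delta G_N)+\Delta G_N
=\gamma\sum_{i=1}^{N-1}\alpha_i v_i-\gamma\sum_{i=1}^{N-1}\alpha_i c_i=\gamma\,\bal^\top(\vv-\cc).
\]
So the separating functional is a positive multiple of the affine function defining $\mathcal H$.

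From this identity both parts of \eqref{eq:sep_def_PLU} follow at once. For $\vv\in\mathcal H$ the right-hand side vanishes, which gives \eqref{eq:sep_def1_PLU}. For $\vv^+\in\mathcal H_+$ and $\vv^-\in\mathcal H_-$ the product in \eqref{eq:sep_def2_PLU} equals $\gamma^2\,\bal^\top(\vv^+-\cc)\,\bal^\top(\vv^--\cc)$, which is $\le 0$. This step is routine, and it holds for every $\gamma>0$.

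The real content, and the main obstacle, is existence: we need lotteries $A,B$ in the sense of Definition~\ref{def:lottery} whose expected basis vectors differ by exactly the prescribed $\Delta\G$. Unlike the linear case, $\G^A$ and $\G^B$ are constrained. Each $\g(r)$ lies in the set of monotone vectors $1\ge g_1\ge\cdots\ge g_N\ge 0$ with at most one fractional coordinate, so $\G(\rr,\p)$, a convex combination of such points, ranges over the whole convex polytope
\[
\mathcal M=\{\bm G:1\ge G_1\ge G_2\ge\cdots\ge G_N\ge 0\}.
\]
The vertices of $\mathcal M$ are the step vectors $(1,\ldots,1,0,\ldots,0)=\g(x_j)$, which are attained at breakpoints. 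By Carath\'eodory, any point of $\mathcal M$ is a lottery with at most $N+1$ outcomes drawn from $\XX$. We are free to pad with zero-probability outcomes to reach the common length $n+1$, perturbing the outcomes slightly if strict ordering is required.

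It therefore suffices to write $\gamma\bm d$ as a difference of two points of $\mathcal M$, where $\bm d$ is the unscaled vector from \eqref{eq:def_G}. Take $\G^B$ to be an interior point of $\mathcal M$, for instance $G^B_i=(N+1-i)/(N+1)$, which has strictly decreasing coordinates with all gaps and boundary margins equal to $1/(N+1)$. Then $\G^A=\G^B+\gamma\bm d$ stays in $\mathcal M$ as long as $\gamma\|\bm d\|_\infty\cdot 2\le 1/(N+1)$. This is exactly where the relaxation factor $\gamma$ is needed: choosing $\gamma$ small enough absorbs the fixed size of $\bm d$. I expect the careful description of $\mathcal M$, namely that every monotone vector is realized by a lottery on the breakpoints, to be the step needing the most care.
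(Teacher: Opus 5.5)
Your proposal is correct. The identity $(\RR\vv)^\top\Delta\G=\gamma\,\bal^\top(\vv-\cc)$, and the derivation of \eqref{eq:sep_def1_PLU}--\eqref{eq:sep_def2_PLU} from it, are exactly the paper's proof of this proposition, which stops there. You treat the closed half-spaces $\mathcal H_\pm$ more carefully than the paper, which writes strict inequalities.

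The difference lies in existence. The paper does not address it in this proof but defers it to Theorem~\ref{theo:solution}. There it fixes outcomes $r_0=x_1$, $r_i\in(x_i,x_{i+1}]$, solves the upper-triangular system $\mathcal G(\rr)(\p^A-\p^B)=\gamma\bbe$ by back substitution, splits the solution $\bm L$ into positive and negative parts, and sets $\gamma=1/\sum_m|L_m|$. You instead identify the range of $\G$ as the order simplex $\mathcal M$ spanned by $\g(x_1),\ldots,\g(x_{N+1})$, put $\G^B$ at the barycenter, and shrink $\gamma$. Carath\'eodory is unnecessary here, since the barycentric weights are explicitly $p_j=G_j-G_{j+1}$ with $G_0=1$, $G_{N+1}=0$.

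Your route is more transparent in three ways:
\begin{itemize}
\item It shows immediately that $\mathcal M-\mathcal M$ contains a neighbourhood of the origin, so every hyperplane is realizable.
\item It makes Proposition~\ref{prop:G_convex} essentially trivial: since $r_i=x_{i+1}$ is allowed, $\G(\hat{\mathcal D})=\mathcal M$.
\item It uses only breakpoint outcomes, which is exactly what Proposition~\ref{cor:bpquery_new} later needs to exclude direction errors.
\end{itemize}
In fact, with $r_i=x_{i+1}$ the paper's back substitution is your barycentric formula. So the two constructions differ only in how $\gamma\bm d$ is split into two lotteries.

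The paper's split buys practicality. Its lotteries have disjoint supports apart from $x_1$ (cf.\ \eqref{eq:shorten_query}) and use the full probability mass. Your barycentric $\G^B$ has margins $1/(N+1)$, which limits $\gamma$ to order $1/N$. It also presents the DM with two nearly uniform full-support lotteries that are close to indifference. This is harmless under the no-error assumption, but makes for a weaker questionnaire.

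Finally, your construction needs no padding or perturbation of outcomes, since both lotteries already live on the $N+1$ distinct breakpoints.
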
 



Proposition~\ref{prop:find_query_G} shows how
$\G^A$ and $\G^B$ may be constructed such that 
the resulting separating hyperplane corresponds to a prespecified hyperplane. It remains to determine how lotteries $A=\{(r_i^A,p_i^A)\}_{i=0}^n$ and $B=\{(r_i^B,p_i^B)\}_{i=0}^n$ can be identified with the specified conditions \eqref{eq:def_G} because 
the system of equalities and inequalities have more variables than equations. 
It is important to note that not every type of queries can recover all possible hyperplanes, e.g., the two-outcome vs one-outcome queries used in \cite{zhang2025modified}.
The next theorem provides a sufficient condition: by properly selecting $N+1$ outcomes and assigning their probabilities appropriately, we can construct a pair of lotteries $\{r_i^A\}_{i=0}^N$ and $\{r_i^B\}_{i=0}^N$ that can recover any desired hyperplane satisfying \eqref{def:plane} exactly.



\begin{theorem}[Design of the query corresponding to a prespecified ${\mathcal H}$] \label{theo:solution} 
Given an outcome vector $\rr=(r_0,\ldots,r_N)^\top\in\R^{N+1}$ such that
 \bgeqn
        r_0=x_1\ \text{and}\; r_i\in (x_i,x_{i+1}],  i=1,\ldots,N,
\label{eq:so_ex_con}
\edeqn 
there exists two probability vectors $({\bm p}^A, {\bm p}^B) =
(\{p_i^A\}_{i=0}^N, \{p_i^B\}_{i=0}^N)$ such that the pair of lotteries $A=\{\rr,\p^A\}$,\  $B=\{\rr,\p^B\}$ with $n=N$
is a solution of \eqref{eq:def_G} for some $\gamma$.


\end{theorem}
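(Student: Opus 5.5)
The plan is to turn \eqref{eq:def_G} into a square linear system for the \emph{difference} of the two probability vectors. Both lotteries share the outcome vector $\rr$, so
\[
\G^A-\G^B=\sum_{i=0}^N (p_i^A-p_i^B)\,\g(r_i).
\]
Write $\q:=\p^A-\p^B$ and let $\bm w\in\R^N$ be the right-hand side of \eqref{eq:def_G} with $\gamma=1$, namely $w_N=-\sum_{j}\alpha_jc_j$ and $w_i=\alpha_i-\sum_j\alpha_jc_j$. The task is then to find $\q$ and $\gamma>0$ with $\sum_{i=0}^N q_i\g(r_i)=\gamma\bm w$ and $\sum_{i=0}^N q_i=0$. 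After that, $\q$ is split into two genuine probability vectors. Once \eqref{eq:def_G} holds, Proposition~\ref{prop:find_query_G} already yields the separating property \eqref{eq:sep_def_PLU}, so nothing further is needed there.

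\textbf{Step 1 (structure of the basis vectors).} Since $r_0=x_1$, we have $\g(r_0)=\bm 0$, so $q_0$ does not enter the moment equation and only serves to enforce $\sum_i q_i=0$. For $i\geqslant1$, $r_i\in(x_i,x_{i+1}]$ gives
\[
\g(r_i)=(1,\ldots,1,t_i,0,\ldots,0)^\top,\qquad t_i=\frac{r_i-x_i}{x_{i+1}-x_i}\in(0,1],
\]
with $i-1$ leading ones and $t_i$ in position $i$. Let $\bm M=[\g(r_1),\ldots,\g(r_N)]\in\R^{N\times N}$. Its entries are $M_{ji}=1$ for $j<i$, $M_{ii}=t_i$, and $M_{ji}=0$ for $j>i$. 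Hence $\bm M$ is upper triangular with strictly positive diagonal, so $\det\bm M=\prod_i t_i>0$ and $\bm M$ is invertible. This is exactly where the half-open condition $r_i>x_i$ in \eqref{eq:so_ex_con} is used.

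\textbf{Step 2 (solving for the difference).} Set $\bm q_{[1:N]}:=\gamma\bm M^{-1}\bm w$, which can be written explicitly by back substitution, and set $q_0:=-\sum_{i=1}^N q_i$. Then $\sum_{i=0}^N q_i\g(r_i)=\gamma\bm w$ and $\sum_{i=0}^N q_i=0$ for every $\gamma>0$.

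\textbf{Step 3 (realizing $\q$ by probability vectors).} This is the only real obstacle: $\p^A$ and $\p^B$ must be nonnegative and sum to one. The freedom in $\gamma$ handles it. Let $\bm\eta:=(-\bm e^\top\bm M^{-1}\bm w,\ \bm M^{-1}\bm w)\in\R^{N+1}$, so that $\q=\gamma\bm\eta$ and $\bm e^\top\bm\eta=0$.
\begin{itemize}
\item If $\bm\eta=\bm 0$, then $\bm w=\bm 0$ (the degenerate case), and any $\p^A=\p^B$ works.
\item Otherwise, choose $0<\gamma\leqslant 2/\|\bm\eta\|_1$. Define $S:=\sum_i q_i^+=\sum_i q_i^-=\tfrac12\|\q\|_1\leqslant1$, where the two sums agree because $\sum_i q_i=0$. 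Then set
\[
p_i^A:=q_i^+ +\frac{1-S}{N+1},\qquad p_i^B:=q_i^- +\frac{1-S}{N+1},\qquad i=0,\ldots,N.
\]
\end{itemize}
Both vectors are nonnegative and each sums to $S+(1-S)=1$. Moreover $\p^A-\p^B=\q^+-\q^-=\q$, because the shared constant cancels. Therefore $\G^A-\G^B=\gamma\bm w$, which is \eqref{eq:def_G} for this $\gamma$, and the proof is complete.
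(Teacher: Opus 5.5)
Your proof is correct and follows essentially the paper's route: the matrix $[\g(r_1),\ldots,\g(r_N)]$ is upper triangular with positive diagonal $\tilde r_i$, you back-substitute to get $\bm L=\bm M^{-1}\bbe$, normalize $\gamma$ by an $\ell_1$ norm, and split the resulting difference into positive and negative parts. The only difference is bookkeeping: the paper fixes $\gamma=1/\sum_{m=1}^N|L_m|$ and places the balancing mass on $r_0=x_1$ (where $\g=\bm 0$), whereas you allow any $\gamma\leqslant 2/\|\bm\eta\|_1$ (a range that contains the paper's value, since $\|\bm\eta\|_1\leqslant 2\sum_{m}|L_m|$), spread the leftover mass uniformly, and also cover the degenerate case $\bm w=\bm 0$ (i.e.\ $\bal=\bm 0$), which the paper's division implicitly excludes.
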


Based on
Theorem \ref{theo:solution} and its proof (see Appendix~\ref{sec:proof_thm4.1}), 
we can 
present an algorithm to generate a query $(A,B)$.

\vspace{2mm}
\begin{breakablealgorithm}\label{alg:GQ}
\caption{ Query generation for pairwise comparisons in CPM}
\begin{flushleft}
\noindent\textbf{Input:}  number of outcomes $n=N$, the set of breakpoints  $\mathcal{X}=\{x_1,\ldots,x_{N+1}\}$, 
a hyperplane ${\cal H}=\{\vv\mid\bal^\top(\vv-\bm{c})=0\}$ 
obtained from Step~3 of Algorithm~\ref{alg:VCPM_PLU}.
\end{flushleft}


\begin{itemize}[leftmargin=42pt,label={},itemsep=0.6ex]
\item[\textbf{Step 1}.] Generate $\rr=(r_0,\ldots,r_N)^\top\in\R^{N+1}$ such that  
$r_0=x_1$ and $r_i=\frac{x_i+x_{i+1}}{2}$, for $i=1,\ldots,N$, set $\rr^A=\rr^B=\rr$. 

\item[\textbf{Step 2}.] Compute $\bbe\in\R^N$ via \eqref{eq:def_beta}. 

\item[\textbf{Step 3}.] 
Let  $\tilde r_i=(r_i-x_i)/(x_{i+1}-x_i)$ for $i=1,\cdots,N$.
Calculate $\{L_i\}_{i=1}^N$ 
via \eqref{eq:L=p^A-p^B} and $\{L_i^+\}_{i=1}^N,\ \{L_i^-\}_{i=1}^N$ via \eqref{eq:L+_L-}.

\item[\textbf{Step 4}.] 
Set probabilities $\p^A$ and $\p^B$ via \eqref{eq:p^A} and \eqref{eq:p^B} respectively. 

\end{itemize}
\begin{flushleft}
\noindent\textbf{Output: }$A=\{\rr,\p^A\}$,\  $B=\{\rr,\p^B\}$.
\end{flushleft}

\end{breakablealgorithm}

\vspace{2mm}

\begin{remark}

A query satisfying \eqref{eq:so_ex_con} is not unique, since the  $r_i$ can be randomly sampled from $(x_i,x_{i+1}]$ for $i=1,\ldots,N$. Theorem \ref{theo:solution} ensures that as long as $\rr$ satisfies \eqref{eq:so_ex_con}, we can always find a pair of lotteries for which the given hyperplane serves as their separating hyperplane. For simplicity, we specify $r_i=\frac{x_i+x_{i+1}}{2}$, for $i=1,\ldots,N$ in Step 1 of Algorithm~\ref{alg:GQ}. 
The design of the queries may be further simplified.  From the definitions of $\{L_i^+\}_{i=1}^N$ and $\{L_i^-\}_{i=1}^N$ in Algorithm~\ref{alg:GQ}, it can be observed that their elements are mutually exclusive, except in the case where $L_i=0$. However, when $L_i=0$, both $p^A_i$ and $p^B_i$ are 0, the corresponding outcome can be deleted from the lottery. By doing so, the elements of $\p^A$ and $\p^B$ are mutually exclusive. Therefore, $A$ and $B$ can be effectively reduced to 
            \bgeqn\label{eq:shorten_query}
            A=\{(x_1,\rr_{\mathcal{I}^+}),(p_0^A,\p^A_{\mathcal{I}^+})\},\ {\mathcal{I}^+}=\{i\mid L_i> 0\}
            \quad \text{and} 
            \quad B=\{(x_1,\rr_{\mathcal{I}^-}),(p_0^B,\p^B_{\mathcal{I}^-})\},\ {\mathcal{I}^-}=\{i\mid L_i< 0\}.            
            \edeqn 
Moreover, $\dim(\rr_{\mathcal{I}^+})+\dim(\rr_{\mathcal{I}^-})\leqslant\dim(\rr)=N+1$.   This discussion shows that the outcomes of two lotteries are exclusive, which reduces the difficulty of the DM's decision.

 
\end{remark}

Theorem~\ref{theo:solution} provides sufficient conditions for constructing a pairwise-comparison query whose $\G^A$ and $\G^B$ satisfy \eqref{eq:def_G}. The geometric insight of this construction is that, if the outcome vector $\rr^A$ and $\rr^B$ satisfy \eqref{eq:so_ex_con}, the corresponding set of $\G^A-\G^B$ is a convex subset of $\mathbb{R}^N$ containing the origin. Consequently, for any prescribed normal direction $\bal$, we can find a solution of \eqref{eq:def_G}. Proposition~\ref{prop:G_convex} formalizes this convexity property.

\begin{proposition}\label{prop:G_convex}
Let $
\mathcal{D}_{\mathcal{S}_l}$  be defined as in \eqref{eq:D_S_l} and $(\rr,\p)\in
\mathcal{D}_{\mathcal{S}_l}$, let
$\G(\rr,\p)$ be defined
 as in \eqref{eq:G}, i.e., $\G(\rr,\p)=\sum_{i=0}^{N} p_i \g(r_i)$.
Define 
$
\hat{\mathcal{D}}=
\Big\{(\rr,\p)\in\mathcal{D}_{\mathcal{S}_l}\mid \rr\text{ satisfies \eqref{eq:so_ex_con}}\Big\}\subseteq\R^{N+1}\times[0,1]^{N+1}$, where
$[0,1]^{N+1}:= \underbrace{[0,1]\times\cdots\times [0,1]}_{N+1}$.
Then $\G(\hat{\mathcal{D}})$
is a convex set in $\R^N$. 
\end{proposition}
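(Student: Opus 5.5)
The proof will exhibit $\G(\hat{\mathcal D})$ as a convex hull, or equivalently as the image of a convex set under a linear map. The structure of $\g$ on the prescribed cells makes this possible. First fix the intended meaning of $\mathcal D_{\mathcal S_l}$: the pairs $(\rr,\p)$ with $\p$ a probability vector, $p_i\geqslant 0$ and $\sum_{i=0}^N p_i=1$. Then, for $\rr$ satisfying \eqref{eq:so_ex_con}, we have $r_0=x_1$ and hence $\g(r_0)=\bm 0$. For $i\geqslant1$,
\[
\g(r_i)=\sum_{j<i}\bm e_j+\tilde r_i\,\bm e_i,\qquad \tilde r_i=\frac{r_i-x_i}{x_{i+1}-x_i}\in(0,1].
\]
So each $\g(r_i)$ ranges exactly over the segment $\mathcal S_i:=\{\bm s_{i-1}+t\bm e_i: t\in(0,1]\}$, where $\bm s_{i-1}:=\sum_{j<i}\bm e_j$ (with $\bm s_0=\bm 0$).

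Consequently
\[
\G(\hat{\mathcal D})=\Big\{\sum_{i=1}^N p_i\big(\bm s_{i-1}+t_i\bm e_i\big): p\in\Delta_{N},\ t_i\in(0,1]\Big\},
\]
where the weight $p_0$ multiplies $\bm 0$. Substitute $q_i:=p_it_i\in[0,p_i]$. Coordinate $j$ of the sum equals $\sum_{i>j}p_i+q_j$, which is linear in $(p,q)$. Since $t_i\in(0,1]$ is free, the set of attainable $(p,q)$ is
\[
\mathcal K=\{(p,q): p\in\Delta_N,\ 0\leqslant q_i\leqslant p_i,\ q_i>0 \text{ whenever } p_i>0,\ q_i=0 \text{ when } p_i=0\}.
\]
When $p_i=0$ the value of $t_i$ is irrelevant and we get $q_i=0$. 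Each of these constraints is convex: for instance, if both endpoints satisfy "$q_i>0$ or $p_i=q_i=0$", then a convex combination also does, because $p_i>0$ forces one endpoint to have $p_i>0$ and hence $q_i>0$. Therefore $\mathcal K$ is convex, and the image of a convex set under a linear map is convex. This concludes the argument.

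The main point of care is the half-open interval $(x_i,x_{i+1}]$, which makes $\mathcal K$ neither open nor closed. This is why I verify convexity of $\mathcal K$ directly, including the coupled conditions $p_i=0\Rightarrow q_i=0$ and $p_i>0\Rightarrow q_i>0$, rather than appealing to a polytope description. An alternative route is to write $\G(\hat{\mathcal D})$ as the convex hull of $\{\bm 0\}\cup\bigcup_i\mathcal S_i$: the union is not convex, but its convex hull is exactly the set of convex combinations with one point taken from each piece, which is what the parametrization above describes. I prefer the linear-image argument because it handles the half-open endpoints transparently. If $\mathcal D_{\mathcal S_l}$ in fact allows $p_i\in[0,1]$ without the normalization, the same argument applies verbatim with $\Delta_N$ replaced by the box $[0,1]^{N+1}$, which is still convex.
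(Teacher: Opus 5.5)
Your argument is correct and is essentially the paper's proof in different packaging. The paper sets $\hat p=\lambda p^A+(1-\lambda)p^B$ and chooses $\hat r_i\in(x_i,x_{i+1}]$ so that $\hat p_i\,\frac{\hat r_i-x_i}{x_{i+1}-x_i}=\lambda p_i^A\tilde r_i^A+(1-\lambda)p_i^B\tilde r_i^B$, which is exactly averaging your coordinates $(p,q)$ with $q_i=p_it_i$ and recovering $t_i=q_i/p_i$; your linear-image formulation makes this change of variables explicit, and your handling of the $p_i=0$ case is, if anything, slightly tidier than the paper's division by $\hat p_i$.
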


To facilitate understanding, 
we give 
a geometric interpretation of $\g$, $\G$ and $\G^A-\G^B$ in the case  
where $\g\in \R^3$, see Figure \ref{fig-8-illus-g}.
The thick blue solid line in 
Figure \ref{fig-8-illus-g}(a) represents the range of values for $\g(\cdot)$, while the blue tetrahedral cones in 
Figure \ref{fig-8-illus-g}(b) represent the ranges of values for $\G^A$ and $-\G^B$, which both are convex sets. The octahedron in 
Figure \ref{fig-8-illus-g}(c) illustrates the range of values for $\G^A-\G^B$.

\begin{figure}[htbp]
	\begin{subfigure}[t]{0.32\textwidth}
		\centering
	\includegraphics[width=0.8\textwidth]{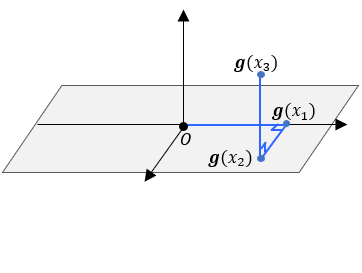}
		  \caption{}\label{fig-g}
	\end{subfigure}
    \begin{subfigure}[t]{0.32\textwidth}
		\centering
\includegraphics[width=0.8\textwidth]{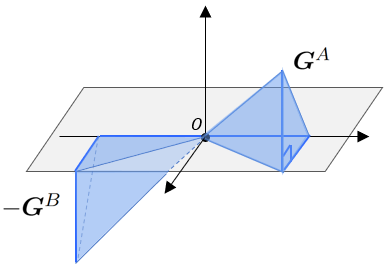}
		  \caption{\label{fig-G_AB}}
	\end{subfigure}
	\begin{subfigure}[t]{0.32\textwidth}
        \centering
\includegraphics[width=0.8\textwidth]{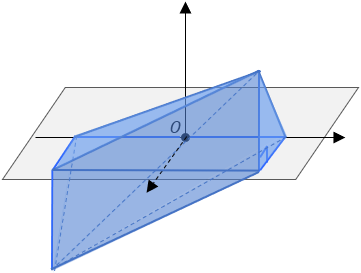}
		  \caption{\label{fig-G_A-G_B}}
	\end{subfigure}
    \caption{(a) The range of $\g(\cdot)$. (b) The range of $\G^A$ and $-\G^B$. (c) The range of $\G^A-\G^B$.}\label{fig-8-illus-g}
\end{figure}





Next, we present the convergence of Algorithm~\ref{alg:VCPM_PLU}. Unlike the multivariate linear utility function, 
we need to consider the convergence  of the utility functions estimated 
from the polyhedron elicited from the increment vectors.

\begin{theorem}[Convergence of Algorithm~\ref{alg:VCPM_PLU} 
in terms of utility functions]
\label{thm:conv_univariate_metric}
Let $\mathcal{X}=\{x_1<\cdots<x_{N+1}\}\subseteq[\underline{x},\overline{x}]$ be a  fixed set of breakpoints with $x_1=\underline{x}$ and $x_{N+1}=\overline{x}$.
Assume that the DM's true 
utility function is normalized piecewise-linear 
$
u^*(x)=(\RR \vv^*)^\top \g(x),
$
with the true increment vector $\vv^*\in\R_+^{N-1}$ contained in the initial bounded polyhedron $\mathcal P^0$.
Let 
$\mathcal P^{k,i}$ be the resulting ambiguity polyhedron
in the increment space after $k(N-1)+i$ queries in Algorithm~\ref{alg:VCPM_PLU}, where $k\geqslant 0$ and $i\in\{1,\ldots,N-1\}$.
Let
\begin{equation}\label{utility_amgSet}
    \mathcal U_{\mathcal{X}}^{k,i}:=\{u\mid  u(x)=(\RR\vv)^\top\g(x),\;\vv\in\mathcal P^{k,i}\}
\end{equation}
be the induced ambiguity set of PLU with breakpoints $\mathcal{X}$. Let $\mathcal B^0$ be the MEB
of $\mathcal P^0$ with radius $r(\mathcal B^0)$.
Then for each $k\geqslant 0$,  $i\in\{1,\ldots,N-1\}$, and $u\in\mathcal U_{\mathcal{X}}^{k,i}$,
\begin{equation}\label{eq:conv_metric_bound}
\dd_K(u^*,u)\ 
\leqslant
\ \dd_I(u^*,u)\ 
\leqslant\ 
\sup_{u',u''\in \mathcal{U}^{k,i}_{\mathcal{X}}} \dd_I(u',u'')
= \sup_{u',u''\in \mathcal{U}^{k,i}_{\mathcal{X}}}\delta\|u'-u''\|_\infty 
\leqslant\ 
2\delta\sqrt{N-1}\, r(\mathcal B^0)\Big(\sqrt{\tfrac{N-2}{N-1}}\Big)^{\,k},
\end{equation}
where $\dd_K$ and $\dd_I$ are
Kantorovich distance and scaled Kolmogorov distance, see \eqref{eq:uv-Kant-dist} and \eqref{eq:uv-Kolm-dist}, and
$\delta:=\overline{x}-\underline{x}$.
In particular, $$\sup\limits_{u\in\mathcal U_{\mathcal{X}}^{k,i}}\dd_K(u^*,u)\to 0,\quad \sup\limits_{u\in\mathcal U_{\mathcal{X}}^{k,i}}\dd_I(u^*,u)\to 0,\ \text{as}\ k\to\infty.
$$

\end{theorem}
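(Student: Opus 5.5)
The plan is to prove the chain in \eqref{eq:conv_metric_bound} one inequality at a time. The real content is the last inequality, which I will reduce to the diameter bound of Theorem~\ref{theo:convergence_linear} by a Lipschitz estimate of the map $\vv\mapsto(\RR\vv)^\top\g(\cdot)$ from $(\R^{N-1},\|\cdot\|_2)$ into $(C[\underline x,\overline x],\|\cdot\|_\infty)$.

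\emph{The first three relations.} For the first inequality I will use the definitions \eqref{eq:uv-Kant-dist} and \eqref{eq:uv-Kolm-dist}. I expect the Kantorovich distance between two normalized utilities on $[\underline x,\overline x]$ to reduce to $\int_{\underline x}^{\overline x}|u^*(x)-u(x)|\,dx$, and the scaled Kolmogorov distance to $\delta\sup_x|u^*(x)-u(x)|$. Then $\dd_K\leqslant\dd_I$ follows by bounding the integrand by its supremum over an interval of length $\delta$, and the equality $\sup\dd_I=\sup\delta\|u'-u''\|_\infty$ is just the definition. For the second inequality, no response error means every cut keeps $\vv^*$, so $\vv^*\in\mathcal P^{k,i}$ and hence $u^*\in\mathcal U^{k,i}_{\mathcal X}$. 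Taking $u'=u^*$ and $u''=u$ then gives $\dd_I(u^*,u)\leqslant\sup_{u',u''}\dd_I(u',u'')$.

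\emph{Lipschitz estimate.} Fix $u',u''\in\mathcal U^{k,i}_{\mathcal X}$ with increment vectors $\vv',\vv''$, and set $\bm w=\vv'-\vv''$. The affine part of $\RR$ cancels, so $\RR\vv'-\RR\vv''=(\bm w,-\bm e^\top\bm w)$. For $x\in(x_j,x_{j+1}]$ write $\theta=(x-x_j)/(x_{j+1}-x_j)\in(0,1]$. From the structure \eqref{eq:g(x)-incre} of $\g$:
\begin{itemize}
\item if $j\leqslant N-1$, then $u'(x)-u''(x)=\sum_{l<j}w_l+\theta w_j$;
\item if $j=N$, then $u'(x)-u''(x)=(1-\theta)\sum_{l\leqslant N-1}w_l$.
\end{itemize}
In both cases $|u'(x)-u''(x)|\leqslant\|\bm w\|_1\leqslant\sqrt{N-1}\,\|\bm w\|_2$ by Cauchy--Schwarz. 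Therefore
\[
\sup_{u',u''}\delta\|u'-u''\|_\infty\leqslant\delta\sqrt{N-1}\max_{\vv',\vv''\in\mathcal P^{k,i}}\|\vv'-\vv''\|_2 .
\]

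\emph{Diameter bound and the main obstacle.} It remains to invoke the last inequality of \eqref{eq:convergence_linear}. The main obstacle is justifying that Theorem~\ref{theo:convergence_linear} applies to Algorithm~\ref{alg:VCPM_PLU}, since that theorem is stated for Algorithm~\ref{alg:VCPM_LU}. The two algorithms differ only in how the queries are realized. By Proposition~\ref{prop:find_query_G} and Theorem~\ref{theo:solution}, the lottery pair realizes exactly the prescribed hyperplane ${\cal H}^{k,i}$ through $\cc^k$ as a separating hyperplane; the factor $\gamma>0$ does not change it. So each major iteration intersects $\mathcal P^k$ with $N-1$ half-spaces bounded by mutually orthogonal hyperplanes through the MEB center $\cc^k$. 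The proof of Theorem~\ref{theo:convergence_linear} uses only this geometry: Lemma~\ref{lemma:MEBcenter} and Proposition~\ref{prop:C_radius-reduction}, applied after rotating the orthonormal basis to the coordinate axes, give $r^{k+1}\leqslant r^k\sqrt{(N-2)/(N-1)}$. The same argument therefore carries over verbatim, with monotonicity $\mathcal P^{k,i}\subseteq\mathcal P^k$ handling intermediate $i$. This yields $\operatorname{diam}\mathcal P^{k,i}\leqslant 2r(\mathcal B^0)\bigl(\sqrt{(N-2)/(N-1)}\bigr)^k$. Combining with the Lipschitz estimate gives \eqref{eq:conv_metric_bound}, and the limits follow because $\sqrt{(N-2)/(N-1)}<1$ for $N\geqslant3$.
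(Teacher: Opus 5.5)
Your proposal is correct and follows the paper's proof. It combines the bound $|u'(x)-u''(x)|\leqslant\|\vv'-\vv''\|_1\leqslant\sqrt{N-1}\,\|\vv'-\vv''\|_2$ (including the $(1-\theta)$ computation on the last interval) with the radius contraction of Theorem~\ref{theo:convergence_linear}, which carries over because the lottery queries realize exactly the prescribed cuts through $\cc^k$. You are also slightly more complete than the paper: its proof bounds only $\dd_I(u^*,u)$ directly, whereas you justify the middle inequality via $u^*\in\mathcal U^{k,i}_{\mathcal X}$ and bound the full diameter.
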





\subsection{An illustrative example}
\label{sec-4-5-example}

In this subsection, we consider a simple academic/hand-computable example to examine the tightness of the theoretical bounds, in which we compare the real error between the true utility $u^*$ and the current ambiguity set $\mathcal{U}_{\mathcal{X}}^{k,i}$ of utility defined in \eqref{utility_amgSet} as well as the theoretical bounds derived in Theorem~\ref{thm:conv_univariate_metric} between them.


Consider a true piecewise-linear utility function 
\begin{eqnarray*}   
u^*(x)=\begin{cases}
0.1x, & \text{for} \; 0\leqslant x\leqslant 1\\
0.2x-0.1,  & \text{for} \;1<x\leqslant 2\\
0.3x-0.3,& \text{for} \;2< x\leqslant 3\\
0.4x-0.6,&\text{for} \;3<x\leqslant 4
\end{cases}
\quad=(0.1,0.2,0.3,0.4)^\top\ \g(x)
\end{eqnarray*}
with true increment vector $\RR\vv^*=(0.1,0.2,0.3,0.4)^\top\in\R^4$ and breakpoints $\mathcal{X}=\{0,1,2,3,4\}$. The initial polyhedron is given by 
$\mathcal{P}^0=\Big\{\vv\in\R^3\mid 0\leqslant v_i\leqslant1,i=1,2,3,\ \sum_{i=1}^3v_i\leqslant1\Big\}.$
\noindent At iteration $k=0$, the MEB center of $\mathcal{P}^0$ is $\cc^0=(1/3,1/3,1/3)^\top,\ \RR\cc^0=(1/3,1/3,1/3,0)^\top$ and the radius of the MEB is $r(\mathcal{B}^0)=\sqrt{6}/3$. 
We specify 
the basis set as\footnote{
For simplicity, in the numerical experiments, 
we construct the orthonormal basis by first taking the canonical coordinate basis in $\mathbb{R}^{N-1}$ and then applying an orthogonal rotation so that the first basis vector aligns with the $\alpha^{k,1}$ determined by the longest axis of the MOE of $\mathcal{P}^k$. 
The remaining basis vectors are obtained through the same rotation and therefore remain mutually orthogonal. 
}
$$\mathcal{A}^0=\Big\{(0.711,  -0.703, -0.008)^\top,(0.703,  0.711, 0.008)^\top,( 0,0.011,-1.000)^\top\Big\}.$$
We implement the first cut with hyperplane $\mathcal{H}^{0,1}=\{\vv\mid(0.711,  -0.703, -0.008)\vv=0 \}$ 
and corresponding 
query 
$A^{0,1}=\{\rr,\p\},\ B^{0,1}=\{\rr,\q\}$ where 
\begin{align*}
    \rr=(0, 0.5, 1.5, 2.5, 3.5), \quad
\p=(0.249, 0.751,0, 0, 0),\quad
\q=(0.751, 0, 0.246, 0.003,0).
\end{align*}
According to \eqref{eq:shorten_query}, the query can be simplified by 
\begin{align*}
    A^{0,1}=\{\rr^A,\p^A\}=\{(0,0.5),\ (0.249,0.751)\},\quad
    B^{0,1}=\{\rr^B,\p^B\}=\{(0,1.5,2.5),(0.751, 0.246, 0.003)\}.
\end{align*}
Likewise, for cut
$\mathcal{H}^{0,2}=\{\vv\mid(0.703,  0.711, 0.008)\vv=0 \}$, we can design 
a pairwise query
\begin{align*}
    A^{0,2}=\{(0, 1.5, 2.5),\ (0.500, 0.157, 0.343)\},\quad
    B^{0,2}=\{(0, 0.5, 3.5),(0.500, 0.163 ,0.337)\}.
\end{align*}
For cut $\mathcal{H}^{0,3}=\{\vv\mid( 0,0.011,-1.000)\vv=0 \}$, a feasible pairwise query is
\begin{align*}
    A^{0,3}=\{(0, 1.5,3.5),\ (0.580,  0.368,0.052)\},\quad
    B^{0,3}=\{(0, 0.5,  2.5),(0.421, 0.370,0.209)\}.
\end{align*}
Figure~\ref{fig:utility_range_and_max_dist}(a) 
depicts changes in the ranges of the utility function values $\left(\left[\inf_{u\in{\mathcal U_{\mathcal{X}}^{k,i}}}u(x),\sup_{u\in{\mathcal U_{\mathcal{X}}^{k,i}}}u(x)\right]\right)$ over its domain $[0,4]$
as the number of queries (N.q)
increases.
We can see that as the range of the utility functions shrinks to the true utility value pointwise, so does the utility function induced by
the MEB center. 
Figure~\ref{fig:utility_range_and_max_dist}(b) 
depicts the change of 
the diameter of $\mathcal U_{\mathcal{X}}^{k,i}$ under the Kantorovich distance and scaled Kolmogorov distance, and
the worst-case utility estimation errors induced by $\mathcal U_{\mathcal{X}}^{k,i}$
under the Kantorovich and scaled Kolmogorov distances (see Definition~\ref{def:kan_kolmo})
as well as the bound of the diameter $2\delta\sqrt{N-1}\, r(\mathcal B^0)\Big(\sqrt{\tfrac{N-2}{N-1}}\Big)^{\,k}$ in \eqref{eq:conv_metric_bound}.
We can see from the figure that
\begin{align*}
\sup\limits_{u\in\mathcal U_{\mathcal{X}}^{k,i}}\dd_K(u^*,u)&\leqslant \sup\limits_{u,u'\in\mathcal U_{\mathcal{X}}^{k,i}}\dd_K(u,u')\leqslant \sup\limits_{u,u'\in\mathcal U_{\mathcal{X}}^{k,i}}\dd_I(u,u'),\\
\sup\limits_{u\in\mathcal U_{\mathcal{X}}^{k,i}}\dd_K(u^*,u)&\leqslant \sup\limits_{u\in\mathcal U_{\mathcal{X}}^{k,i}}\dd_I(u^*,u)\leqslant 2\delta \sqrt{N-1}\, r(\mathcal B^0)\Big(\sqrt{\tfrac{N-2}{N-1}}\Big)^{\,k},  
\end{align*}
which confirms our theoretical results in Theorem~\ref{thm:conv_univariate_metric}.
The bound in Theorem~\ref{thm:conv_univariate_metric} is loose and corresponds to a worst-case scenario; in practice, CPM may converge much faster.

\begin{figure}[htbp]
\centering
\begin{subfigure}{0.49\linewidth}
    \centering
\includegraphics[width=0.8\linewidth]{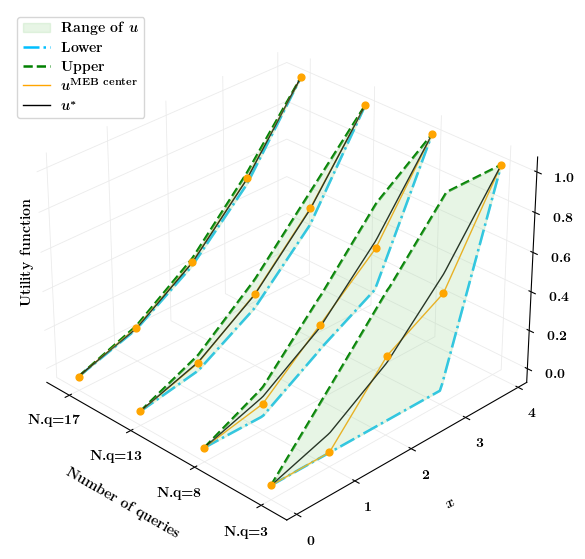}
    \caption{}
    \label{fig:single_example}
\end{subfigure}\hfill
\begin{subfigure}{0.49\linewidth}
    \centering
 \includegraphics[width=0.9\linewidth]{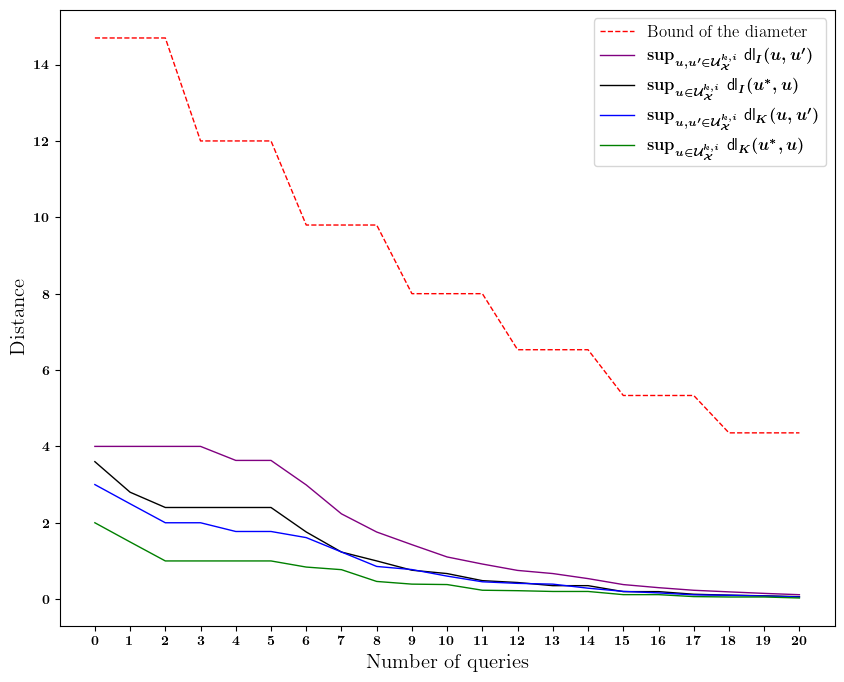}
    \caption{ }
    \label{fig:max_dist}
\end{subfigure}
\caption{\footnotesize(a) Changes of the range of point-wise utility functions $\mathcal U_{\mathcal{X}}^{k,i}$ as the number of queries increases.
(b) 
Variation of the diameter and its upper bound, as well as 
the worst-case error under the Kantorovich metric and scaled Kolmogorov metric.
\label{fig:utility_range_and_max_dist}}
\end{figure}



\subsection{
Comparison between CPM and POLY
}

{\color{black}
To see the difference in performance between 
CPM and POLY, we 
apply them to 
the following 
piecewise-linear utility function 
\begin{eqnarray*}   
u^*(x)=\begin{cases}
5.3846x, & \text{for} \; 0\leqslant x\leqslant 0.0377\\
4.7619x+0.0235,  & \text{for}\; 0.0377<x\leqslant 0.0398\\
0.8196x+0.1804,& \text{for}\; 0.0398< x\leqslant 1
\end{cases}
\quad=(0.203,0.01,0.787)^\top\ \g(x)
\end{eqnarray*}
with $\RR\vv^*=(0.203,0.01,0.787)^\top\in\R^3$ 
and 
breakpoints $\mathcal{X}=\{0,0.0377,0.0398,1\}$.
We begin with
initial polyhedron 
$
\mathcal{P}^0=\bigl\{\vv\in\R^2_+ \,\big|\,
v_1+0.01v_2\leqslant0.2225,\;
-v_1+0.01v_2\leqslant-0.2025,\;
v_1+v_2\leqslant1
\bigr\},
$
which forms a highly elongated and narrow polyhedron
in $\R^2$,
see the largest gray area in
Figure~\ref{fig:compare_piecewise}. 
Figures~\ref{fig:compare_piecewise}(a) and (b) 
depict how the initial 
polyhedron is cut
by
the two methods.
We can see in 
Figure~\ref{fig:compare_piecewise}(a) that
POLY initially generates cutting planes which are 
nearly parallel in the first four iterations, reflecting the fact that
the queries generated by the method (via solving optimization problems in \eqref{eq:original_PM})
may result in very small angles between two consecutive cuts.
By contrast, 
the CPM 
performs
coordinate-wise cuts
as designed.
The main difference is that POLY cuts
along direction 
$v_2$ which has the largest initial range (ambiguity),
whereas CPM cuts the ranges of all increments in each iteration.

To see 
how the ambiguity set of utility functions evolves during the process, we 
plot the range of  the ambiguity set of utility functions at three  
specified points
$\hat{x}_1=0.0377$, 
$\hat{x}_2=0.0398$ 
 and $\hat{x}_3=0.09$
within the domains 
of the first, second and third linear pieces of
$u^*$ respectively, i.e., 
$\max\limits_{\vv\in\mathcal{P}^k} \RR(\vv)^\top g(\hat{x}_i)$) and 
$\min\limits_{\vv\in\mathcal{P}^k} \RR(\vv)^\top g(\hat{x}_i)$ for 
$i=1,2,3$, see Figure~\ref{fig:Extrme_bound}.
We can make the following observations.
\begin{itemize}

\item Figure~\ref{fig:Extrme_bound}(a) shows that the blue curve
is flat during the first 4 queries, because 
the value of $u(\hat{x}_1)$ depends only on $v_1$ and its
range 
(represented by
$[
\min\limits_{\vv\in\mathcal{P}^k} \RR(\vv)^\top g(\hat{x}_1)),
\max\limits_{\vv\in\mathcal{P}^k} \RR(\vv)^\top g(\hat{x}_1))
]$ 
is unchanged under the POLY. 
This is consistent with our observation in Figure~\ref{fig:compare_piecewise}.

\item Figure~\ref{fig:Extrme_bound}(b) shows that both the blue curve and 
the green curve
drop rapidly over 
the first 5 queries, because 
the value of $u(\hat{x}_2)$ is determined by $v_1+v_2$ and the uncertainty of $v_2$ decreases rapidly,
leading the range
(represented by
$[
\min\limits_{\vv\in\mathcal{P}^k} \RR(\vv)^\top g(\hat{x}_2)),
\max\limits_{\vv\in\mathcal{P}^k} \RR(\vv)^\top g(\hat{x}_2))
]$ 
to decrease significantly, which is again consistent with 
our observation in Figure~\ref{fig:compare_piecewise}.
At this point, POLY reduces the range/ambiguity of the component (corresponding to the second linear piece of $u^*$) even more rapidly than CPM.
A similar observation can be made in 
Figure~\ref{fig:Extrme_bound}(c)  (corresponding to $1-v_1-v_2$, the increment of the third linear piece). 


    \item 
After 12 queries, no significant differences are observed, indicating that the two methods exhibit comparable local performance. 

    
\end{itemize}
Summarizing 
the observations above, 
we conclude that CPM demonstrates greater overall robustness and stability, as the ranges of utility values across all increments decrease in a steady and consistent manner.

}


\begin{figure}[htbp]
\vspace{-3mm}
    \begin{subfigure}[t]{\textwidth}
        \centering        \includegraphics[width=0.98\textwidth]{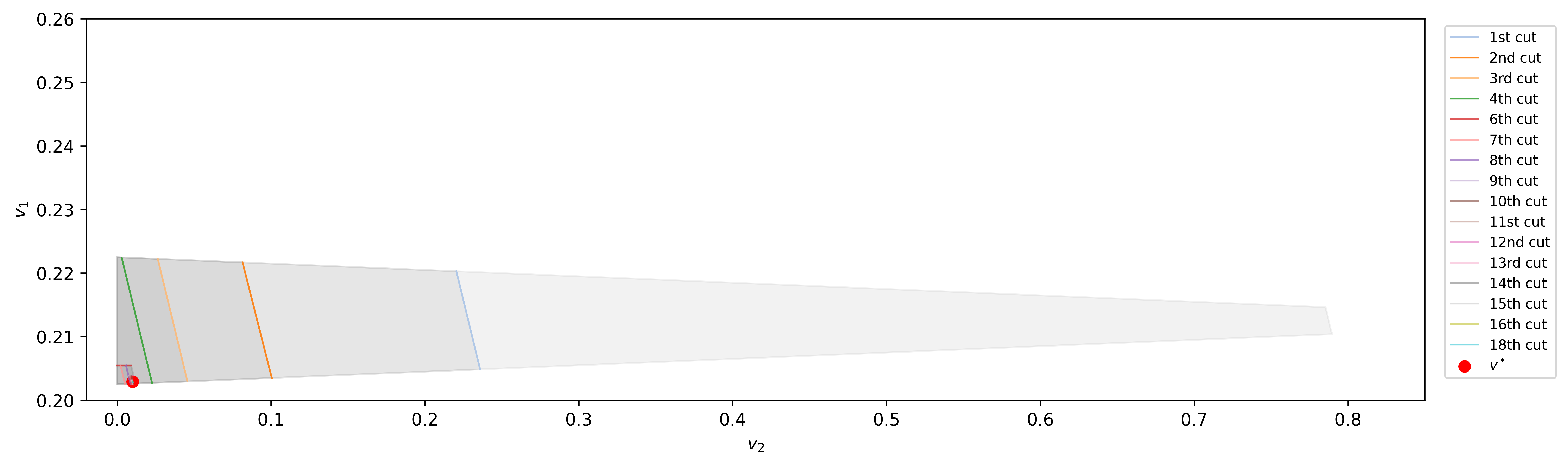}
        \caption{POLY\label{fig:Ex_Poly}}
    \end{subfigure}\\
    \begin{subfigure}[t]{\textwidth}
      \vspace{-1mm}
        \includegraphics[width=\textwidth]{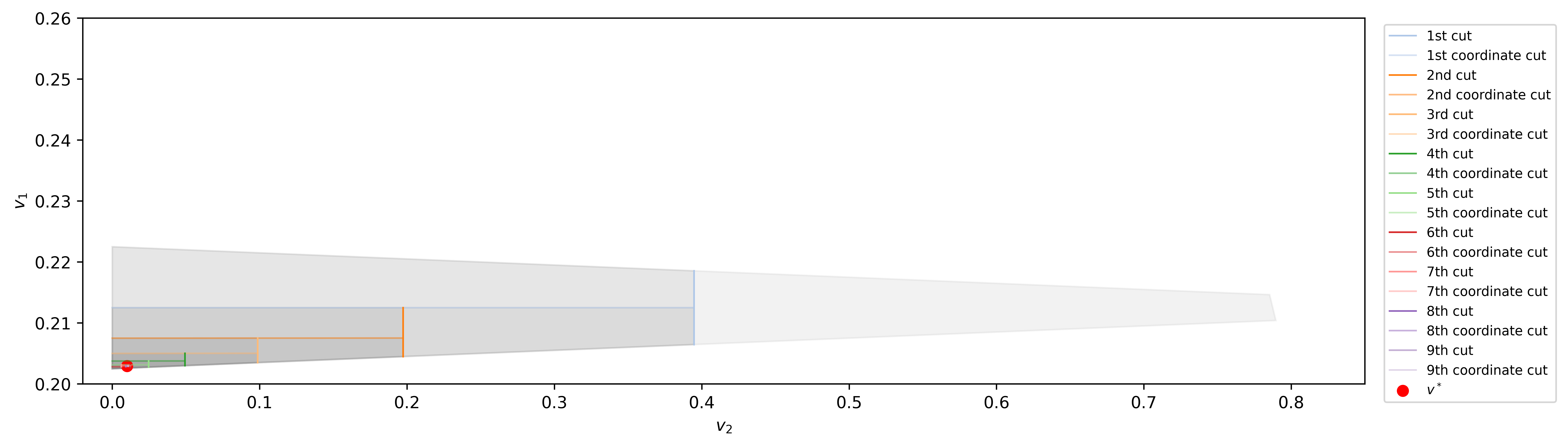}
        \caption{CPM\label{fig:Ex_CPM}}
    \end{subfigure}
    \vspace{-3mm}
    \caption{The cutting process\label{fig:compare_piecewise}}
    \end{figure}
\begin{figure}[!htbp]
\vspace{-3mm}
    \centering
    \begin{subfigure}[t]{0.32\textwidth}
        \centering
        \includegraphics[width=\textwidth]{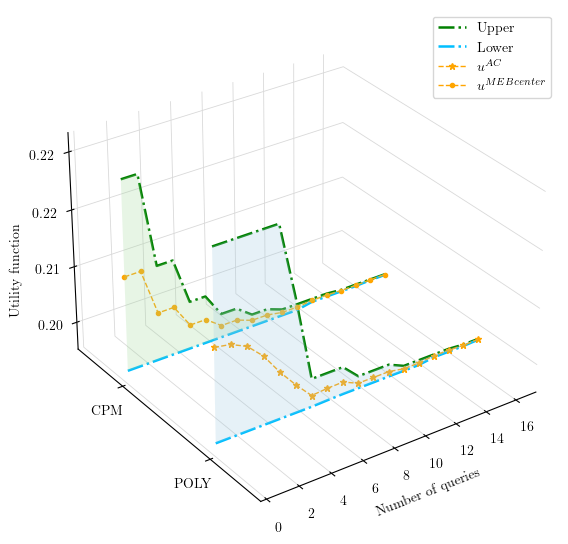}
    \end{subfigure}
    \begin{subfigure}[t]{0.32\textwidth}
        \centering
        \includegraphics[width=\textwidth]{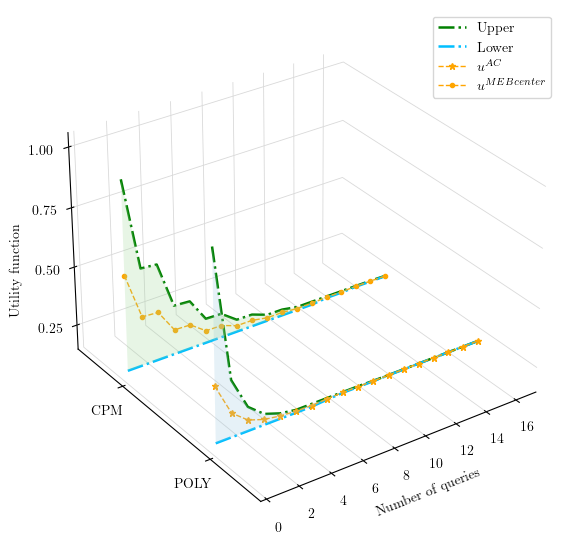}
    \end{subfigure}
    \begin{subfigure}[t]{0.32\textwidth}
        \centering
        \includegraphics[width=\textwidth]{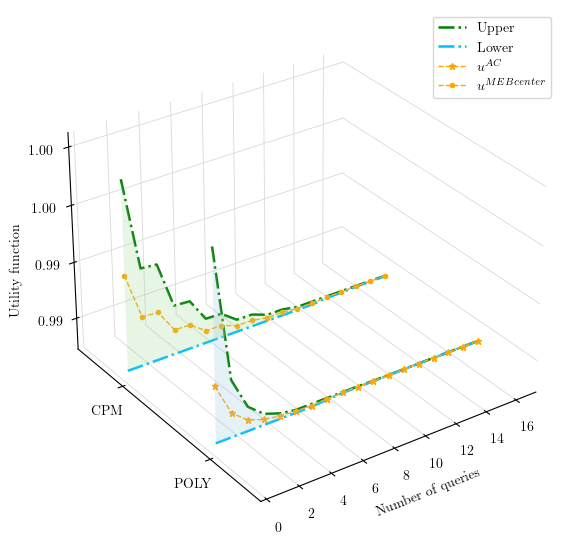}
    \end{subfigure}
    \begin{subfigure}[t]{0.32\textwidth}
        \centering
        \includegraphics[width=\textwidth]{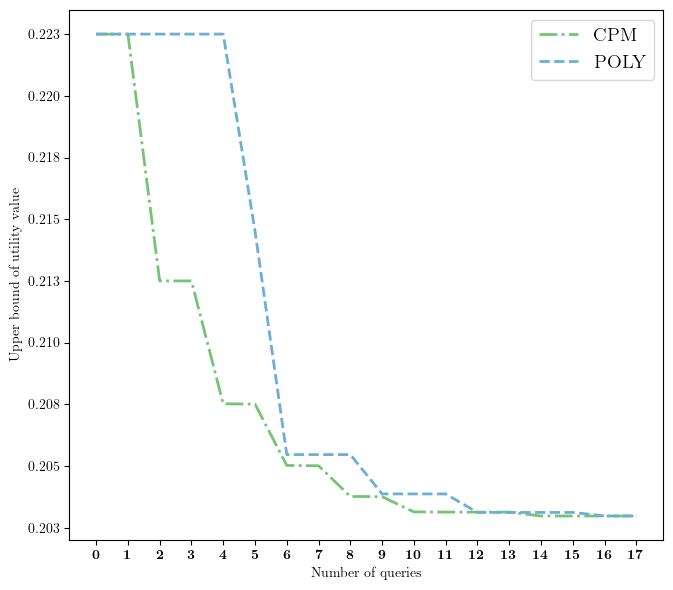}
        \caption{\label{fig:Ex_x1}}
    \end{subfigure}
    \begin{subfigure}[t]{0.32\textwidth}
        \centering
        \includegraphics[width=\textwidth]{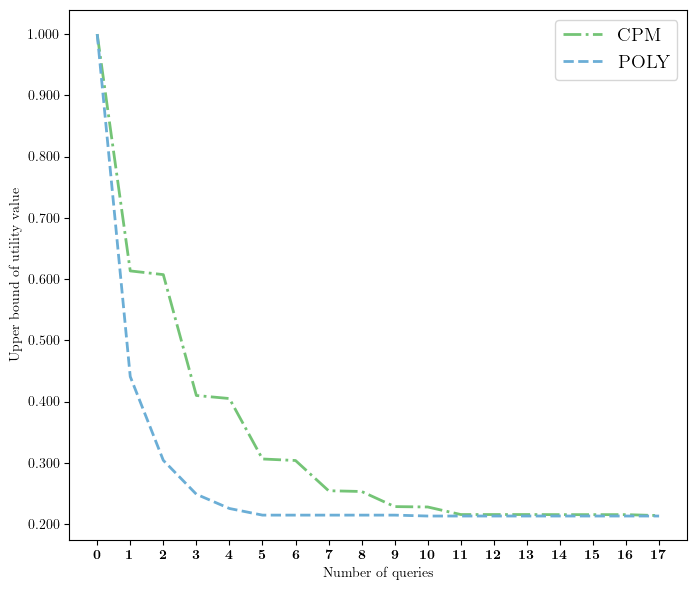}
        \caption{\label{fig:Ex_x2}}
    \end{subfigure}
    \begin{subfigure}[t]{0.32\textwidth}
        \centering
        \includegraphics[width=\textwidth]{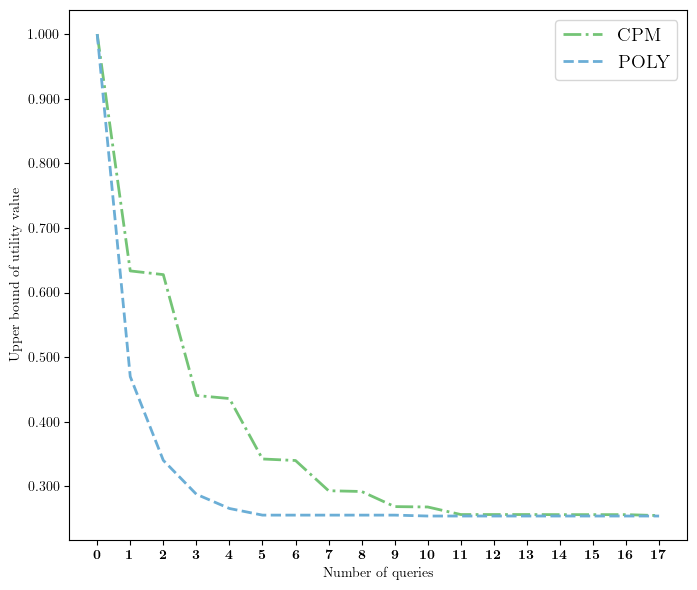}
        \caption{\label{fig:Ex_x3}}
    \end{subfigure}
\caption{\footnotesize The range of $u(\hat{x}_i)$ in polyhedra.
Upper row: changes of the ranges of
$\mathcal{U}_{\XX}^{k,i}$ at $u(\hat{x}_1),\;u(\hat{x}_2)$ and $u(\hat{x}_3)$ as the number of queries increases.
Lower row: 
  Evolution of the upper bound for $u(\hat{x}_1),\;u(\hat{x}_2)$ and $u(\hat{x}_3)$.
}
\label{fig:Extrme_bound}
\end{figure}

\section{Extension to general continuous utility functions}
\label{sec:continuous-utility}
In Section \ref{sec:univariate}, the true utility is assumed to be piecewise-linear 
with a fixed set of breakpoints. 
In this section we extend
the framework to the case where the DM’s true utility is a general continuous utility function on a
compact interval.

\begin{assumption}\label{ass:cont}
The decision maker's preference is represented by a utility function
$u^*:[\ulow,\ubar]\to [0,1]$ which is normalized, nondecreasing, and $L$-Lipschitz continuous, that is,
\bgeqn\label{def:Lip-utility}
u^*\in \UU_L:=\left\{u\mid u(\ulow)=0, \ u(\ubar)=1, u\;\text{is nondecreasing},\;\ |u(x)-u(y)|\leqslant L|x-y|, \ \forall x,y\in[\ulow,\ubar]\right\}.
\edeqn

\end{assumption}


    



\subsection{Piecewise-linear approximation}

To estimate a non-parametric shape-free utility function, it is natural to use a piecewise-linear approximation
(PLA), because the approximation on a finite breakpoint set yields a finite-dimensional representation. The
breakpoint set can then be refined stage by stage so that the approximation becomes increasingly accurate.
We begin with 
an initial set of breakpoints 
$
 \XX^0=\{x_1^0<\cdots<x_{N_0+1}^0\}\subset[\ulow,\ubar],\;N_0\geqslant3
$
and after $k-1$
iterations, the set is updated to
\bgeqn 
\label{eq:X_k-h_k}
 \XX^k:=\{x_1^k<\cdots <x_{\Nk+1}^k\}\subset[\ulow,\ubar],
\qquad \hk:=\max_{1\leqslant i\leqslant \Nk}(x_{i+1}^k-x_i^k).
\edeqn 
In the discussions below, 
we abuse terminology slightly by referring to ${\cal X}^k$ interchangeably as the grid and $h_k$ mesh size. This is sensible from the numerical approximation perspective.
The updated sets are nested, i.e.,
$\XX^k\subset \XX^{k+1}$, with
$x_1^k=\ulow$ and $x_{\Nk+1}^k=\ubar$ being fixed for all $k$.
Let
\bgeqn\label{def:Pi_k}
 \Pik(u):=\big(u(x_2^k)-u(x_1^k),\ldots,u(x_{\Nk}^k)-u(x_{\Nk-1}^k)\big)^\top\in \R^{\Nk-1}
\edeqn
denote the vector of 
increments over the successive breakpoints.
Let $\RR$ be the 
recovery map
defined in \eqref{map:recover}. 
The piecewise-linear interpolant
of $u$ on $\XX^k$ is defined by
\bgeqn \label{eq:T_k-PLA}
 (\Tk u)(x):=(\RR\Pik(u))^\top \g^k(x),
\edeqn 
where $\g^k\in\R^{\Nk}$ is the basis mapping on the grid $\XX^k$ exactly as in Section~4.
In this section, we 
consider 
design of a query $q=(A,B)$ 
which contains lotteries 
\bgeqn 
\label{eq:query-AB}
 A=\{(r_i^A,p^A_i)\}_{i=1}^{n_A}, \qquad B=\{(r^B_i,p^B_i)\}_{i=1}^{n_B}.
\edeqn
Unlike Definition~\ref{def:lottery}, here we allow $n_A$ to be different from $n_B$. 
To ease the exposition in the 
forthcoming discussions about pairwise comparisons, we introduce 
the following function
on $\UU_L$:
\bgeqn 
\label{eq:Phi_q(u)}
 \Phi_q(u):=\sum_{i=1}^{n_A} p^A_i u(r^A_i)-\sum_{s=1}^{n_B} p^B_s u(r^B_s),
\edeqn 
which is the difference between the expected utility of $A$ and 
$B$.


\subsection{Piecewise-linear approximation error}
Our fundamental idea is to use the piecewise-linear function to approximate the true unknown utility function and refine the approximation by increasing the number of pieces. 
The first technical result is about the error bound of the piecewise-linear approximation. 
The next proposition provides a sharp error bound when the original utility function is 
 a normalized nondecreasing Lipschitz  
function.

\begin{proposition}[Error bound for piecewise-linear approximation]\label{lem:pla_error}
Let $\Tk u$ be defined as in \eqref{eq:T_k-PLA}. Then for any $u\in\UU_L$,
\bgeqn 
\label{eq:PLA-Lip}
 \|u-\Tk u\|_\infty \leqslant \frac{L\hk}{4},
\edeqn  
where $h_k$ is defined as in \eqref{eq:X_k-h_k}.
\end{proposition}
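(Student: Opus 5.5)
The plan is to work interval by interval and reduce the claim to an elementary one-dimensional estimate. Monotonicity of $u$ is what yields the sharp constant $1/4$.

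First I will check that $\Tk u$ really is the nodal piecewise-linear interpolant of $u$ on $\XX^k$. By \eqref{def:Pi_k} and the recovery mapping \eqref{map:recover}, the vector $\RR\Pik(u)$ has components $u(x_{j+1}^k)-u(x_j^k)$ for $j=1,\ldots,\Nk-1$. Its last component is $1-\sum_{j<\Nk}(u(x_{j+1}^k)-u(x_j^k)) = 1-u(x_{\Nk}^k)$. Since $u(\ubar)=1$ and $u(\ulow)=0$, this equals $u(x_{\Nk+1}^k)-u(x_{\Nk}^k)$, so the last component is the last increment as well.

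Next I substitute the explicit form of $\g^k$ from \eqref{eq:g(x)-incre}. For $x\in(x_i^k,x_{i+1}^k]$ this gives the telescoping sum
\[
(\Tk u)(x)=u(x_i^k)+t\,\bigl(u(x_{i+1}^k)-u(x_i^k)\bigr),\qquad t:=\frac{x-x_i^k}{x_{i+1}^k-x_i^k}\in(0,1].
\]
I also use $(\Tk u)(x_1^k)=0=u(\ulow)$.

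Now I fix $i$ and such an $x$, and set $h_i:=x_{i+1}^k-x_i^k$. I rewrite the error as
\[
u(x)-(\Tk u)(x)=(1-t)\bigl(u(x)-u(x_i^k)\bigr)-t\bigl(u(x_{i+1}^k)-u(x)\bigr).
\]
Because $u$ is nondecreasing, both brackets are nonnegative. By the Lipschitz property they are bounded by $L t h_i$ and $L(1-t)h_i$, respectively. Dropping the subtracted nonnegative term gives the upper bound $u(x)-(\Tk u)(x)\leqslant (1-t)\,L t h_i$. Dropping the first term gives the lower bound $u(x)-(\Tk u)(x)\geqslant -t\,L(1-t)h_i$.

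Hence $|u(x)-(\Tk u)(x)|\leqslant t(1-t)Lh_i\leqslant Lh_i/4\leqslant L\hk/4$. Taking the supremum over $x$ and $i$ yields \eqref{eq:PLA-Lip}.

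The only delicate point is the sign argument. Without monotonicity, the triangle inequality would give only $2t(1-t)L h_i\leqslant Lh_i/2$. The sharper constant comes from the two brackets having the same sign, so at most one of them contributes to each one-sided bound.

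Sharpness can be noted briefly. On a single interval, take $u$ to be constant on the first half and to rise with slope $L$ on the second half. At the midpoint $t=1/2$ the error is exactly $Lh_i/4$.
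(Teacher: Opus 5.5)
Your proof is correct and follows essentially the paper's argument. The paper uses the same decomposition of the error on each subinterval into the two weighted brackets, uses monotonicity so that the error is bounded by the larger of $(1-\lambda)L(z-x_i^k)$ and $\lambda L(x_{i+1}^k-z)$, and then concludes via $\lambda(1-\lambda)\le 1/4$. Your explicit check that $T_k u$ is the nodal interpolant, including the last increment obtained through the recovery map and the normalization, is a detail the paper takes for granted.
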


Proposition~\ref{lem:pla_error} shows that the piecewise-linear approximation error is controlled by the mesh size $\hk$. This suggests that refining the breakpoint set should eventually make the approximation accurate. However, because of the PLA error, direction error may occur, which we discuss in Section~\ref{sec:direction_error}.

\subsection{Direction error of a cut caused by PLA\label{sec:direction_error}}

{\color{black}

Our CPM works on
piecewise-linear utility functions.
However, the approximation error of PLA may cause a wrong cut. 
To explain this issue, let us use ${\cal U}_{L,k}$ to denote the set of all normalized $L$-Lipschitz piecewise-linear 
utility functions with breakpoint set $\XX^k$ (see \eqref{eq:X_k-h_k}) and ${\cal U}_L$ 
be the set of all normalized 
$L$-Lipschitz utility functions which contains the true $u^*$ (see \eqref{def:Lip-utility}). 
By the definition of $T_k$ (see \eqref{eq:T_k-PLA}) 
\bgeqn 
T_ku^* \in T_k{\cal U}_L 
={\cal U}_{L,k}\subset {\cal U}_L, 
\edeqn 
where $T_k{\cal U}_L:=\{T_ku\mid u\in{\cal U}_L\}$.
For a given query $(A,B)$, when
the DM chooses $B$, from an expected-utility perspective, it means
$\mathbb{E}[u^*(A)]<\mathbb{E}[u^*(B)]$. 
Our CPM uses this information to construct a cut
$\mathbb{E}[u_k(A)]<\mathbb{E}[u_k(B)]$ where $ u_k\in{\cal U}_{L,k}$. However, 
there is a potential risk 
that
$\mathbb{E}[T_k u^*(A)]\geqslant \mathbb{E}[T_ku^*(B)]$ because of the discrepancy 
between $u^*$ and $T_ku^*$.
If this occurs, we will have 
$T_ku^*\not\in {\cal M}_k$, where
${\cal M}_k:={\cal U}_{L,k}\cap\{u_k\in{\cal U}_{L,k}\mid\mathbb{E}[u_k(A)]<\mathbb{E}[u_k(B)]\}$.
This is also known as the \underline{direction error} 
defined by 
\cite{zhang2025modified}. 
}

\begin{definition}[Direction error of a cut caused by PLA]
\label{def:direction_error}
 Let
$\Phi_q(u)$ be defined as in \eqref{eq:Phi_q(u)}.
Consider query $q=(A,B)$ defined in \eqref{eq:query-AB} and piecewise-linear interpolant $\Tk u$ defined in 
\eqref{eq:T_k-PLA}.
$q$ is said to  
incur
a \emph{direction error} 
on the grid
$\XX^k$ if the response based on the true utility $u^*$ 
and the response predicted by the current piecewise-linear
approximation $\Tk u^*$ 
lead to opposite preferences, i.e.,
\bgeqn 
\label{eq:direction-error}
 \Phi_q(u^*)\,\Phi_q(\Tk u^*)<0.
\edeqn 
\end{definition}

We give a simple example to illustrate.

\begin{example}[Direction error and preference conflict]
    Consider the true utility function
$u^*(x)=1-(x-1)^2$  defined over $[0,1]$ and grid 
 $\XX^0=\{0,0.1,0.5,1\}$.
The piecewise-linear interpolant on $\XX^0$ is $(T_0u^*)(x)=(0.19,0.56,0.25)\g(x)$. 
Consider the query with
$
 A=\{\rr^A=(0.3),\p^A=(1)\},
 $ and $
 B=\{\rr^B=(0.1,0.5),\p^B=(0.45,0.55)\}.
$
Then
$
\Phi_q(u^*)
=u^*(0.3)-0.45\,u^*(0.1)-0.55\,u^*(0.5)=0.51-(0.45\times 0.19+0.55\times 0.75)=0.012>0.
$
By contrast,
$
\Phi_q(T_0u^*)
=(T_0u^*)(0.3)-0.45\,(T_0u^*)(0.1)-0.55\,(T_0u^*)(0.5)=0.47-0.498=-0.028<0.
$
In this case, the query suffers
a direction error on the grid $\XX^0$.

In practice, $u^*$ is unknown. What the modeler
knows is that the DM chooses $A$. If we use the piecewise-linear interpolant to describe 
the DM's choice, then we obtain
$$
\vv^\top\big(\g^0(0.3)-0.45\,\g^0(0.1)-0.55\,\g^0(0.5)\big)>0.
$$
Observe that
$$
\g^0(0.3)-0.45\,\g^0(0.1)-0.55\,\g^0(0.5)=(0,-0.05,0)^\top.
$$
Substituting  the quantity into the inequality above leads to 
$
-0.05\,v_2>0,
$
and hence $v_2<0$, which violates the monotonicity of $u^*$.
This observation demonstrates that 
the direction error may be detected 
in the process of eliciting preference based on the PLA interpolant without knowing the true $u^*$.
In other words,  a query with direction error may lead to an infeasible or contradictory update of the
 polyhedron. We call this phenomenon preference conflict.
\end{example}

To avoid 
the direction error,
we 
confine the outcomes 
 of $A$ and $B$ to the current set of breakpoints in which case
the preference represented by $u^*$ is consistent with the preference represented by $u_k$
 (e.g.~$\mathbb{E}[u^*(A)]<\mathbb{E}[u^*(B)]\Leftrightarrow
\mathbb{E}[u_k(A)]<\mathbb{E}[u_k(B)]$).
 The next proposition addresses this.


\begin{proposition}[No direction error on breakpoint-supported queries]\label{lem:no_direction_error}
If every support point of a query $q=(A,B)$ belongs to $\XX^k$, then for every $u\in\UU_L$,
$
 \Phi_q(u)=\Phi_q(\Tk u).
$
In particular, no breakpoint-supported query  
causes a direction error.
\end{proposition}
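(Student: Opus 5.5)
The plan is to show that $T_k u$ interpolates $u$ exactly at every breakpoint of $\XX^k$, and then to substitute into the definition \eqref{eq:Phi_q(u)} of $\Phi_q$. Since $\Phi_q(u)$ depends on $u$ only through its values $u(r_i^A)$ and $u(r_s^B)$, breakpoint support of the query gives $\Phi_q(u)=\Phi_q(T_ku)$ immediately.

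The first step is the interpolation identity $(T_k u)(x_j^k)=u(x_j^k)$ for $j=1,\ldots,N_k+1$.
\begin{itemize}
\item At $x_1^k=\ulow$, \eqref{eq:g(x)-incre} gives $\g^k(x_1^k)=0$, so $(T_ku)(x_1^k)=0=u(\ulow)$ by normalization of $u\in\UU_L$.
\item For $j\geqslant 2$, the point $x_j^k$ lies in $(x_{j-1}^k,x_j^k]$, so by \eqref{eq:g(x)-incre}, $\g^k(x_j^k)$ has its first $j-2$ entries equal to $1$, its $(j-1)$-th entry equal to $(x_j^k-x_{j-1}^k)/(x_j^k-x_{j-1}^k)=1$, and zeros afterward.
\item Hence $(T_ku)(x_j^k)=\sum_{\ell=1}^{j-1}(\RR\Pi_k(u))_\ell$.
\end{itemize}

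By \eqref{def:Pi_k} and \eqref{map:recover}, the first $N_k-1$ components of $\RR\Pi_k(u)$ are the increments $u(x_{\ell+1}^k)-u(x_\ell^k)$. The last component is $1-\sum_{\ell=1}^{N_k-1}\big(u(x_{\ell+1}^k)-u(x_\ell^k)\big)=1-u(x_{N_k}^k)+u(x_1^k)$. Using $u(x_1^k)=0$ and $u(x_{N_k+1}^k)=u(\ubar)=1$, this equals $u(x_{N_k+1}^k)-u(x_{N_k}^k)$, so it is also the correct increment. The sum therefore telescopes to $u(x_j^k)-u(x_1^k)=u(x_j^k)$ for every $j$, including $j=N_k+1$.

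The only point needing care is this last component: the recovery map uses normalization rather than an explicit increment, so the normalization $u(\ubar)=1$ must be invoked. Everything else is bookkeeping with the indicator structure of $\g^k$.

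Finally, if all $r_i^A$ and $r_s^B$ lie in $\XX^k$, each term in \eqref{eq:Phi_q(u)} is unchanged when $u$ is replaced by $T_ku$, so $\Phi_q(u)=\Phi_q(T_ku)$. Taking $u=u^*$ gives $\Phi_q(u^*)\Phi_q(T_ku^*)=\Phi_q(u^*)^2\geqslant 0$. Thus \eqref{eq:direction-error} cannot hold, and no breakpoint-supported query causes a direction error.
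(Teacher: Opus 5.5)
Your proposal is correct and follows the paper's own argument: the paper's proof is the single observation that $u(x_i)=(\Tk u)(x_i)$ for all $x_i\in\XX^k$, so the two comparison functionals coincide. You additionally verify that interpolation identity explicitly, including the point that the last recovered increment is correct only because of the normalization $u(\ulow)=0$, $u(\ubar)=1$; the paper leaves this check implicit.
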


\begin{proof}
Since $u(x_i)=\Tk u(x_i)$ for all $x_i\in\XX^k$, the two comparison functionals are identical.
\end{proof}

The next proposition shows how 
a query with the specified 
outcomes in ${\cal X}^k$ may be constructed.



\begin{proposition}[Breakpoint-supported implementation of a CPM cut]\label{cor:bpquery_new}
Let $\XX^k=\{x^k_1,\cdots,x_{N_k+1}^k\}$ be a fixed set of breakpoints 
and 
$
 \mathcal H=\{\vv\in\R^{\Nk-1}:\bal^\top(\vv-\cc)=0\}
$
be a prescribed hyperplane in the current increment space.
Let $
 r_0=x_1^k $ and 
 $ r_i=x_{i+1}^k$ for 
 $i=1,\ldots,\Nk$ in Algorithm~\ref{alg:GQ}.
Then 
there exists a query $q=(A,B)$ whose support
is contained in $\XX^k$ and 
the resulting separating hyperplane is precisely $\mathcal{H}$. Moreover, for every
$u\in\UU_L$,
$
 \Phi_q(u)=\Phi_q(\Tk u)=\gamma\,\bal^\top\big(\Pik(u)-\cc\big)
$
for some $\gamma>0$.
\end{proposition}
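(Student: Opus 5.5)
The construction is Theorem~\ref{theo:solution} with the interior points $r_i\in(x_i^k,x_{i+1}^k]$ taken as the right endpoints $r_i=x_{i+1}^k$, which the half-open interval in \eqref{eq:so_ex_con} allows. The plan has four steps.

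\textbf{Step 1: hypotheses and separation.} The choice $r_0=x_1^k$, $r_i=x_{i+1}^k\in(x_i^k,x_{i+1}^k]$ for $i=1,\ldots,\Nk$ satisfies \eqref{eq:so_ex_con} on the grid $\XX^k$. Theorem~\ref{theo:solution} then gives probability vectors $\p^A,\p^B$ such that $A=\{\rr,\p^A\}$ and $B=\{\rr,\p^B\}$ solve \eqref{eq:def_G} for some $\gamma>0$. Proposition~\ref{prop:find_query_G} then shows that $\mathcal H$ is the separating hyperplane of $q$. By construction every support point of $q$ lies in $\XX^k$. If some $L_i=0$, the outcome can be dropped as in \eqref{eq:shorten_query}, and the support stays in $\XX^k$.

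\textbf{Step 2: the identity $\Phi_q(u)=\Phi_q(\Tk u)$.} This is exactly Proposition~\ref{lem:no_direction_error}, since $q$ is breakpoint-supported.

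\textbf{Step 3: computing $\Phi_q(\Tk u)$.} By \eqref{eq:T_k-PLA}, $\Tk u(x)=(\RR\Pik(u))^\top\g^k(x)$, so linearity of expectation gives
\[
\Phi_q(\Tk u)=(\RR\Pik(u))^\top(\G^A-\G^B).
\]
Write $\vv=\Pik(u)$ and $\Delta=\G^A-\G^B$. Expanding with $\RR\vv=(\vv,1-\bm e^\top\vv)$ gives
\[
(\RR\vv)^\top\Delta=\sum_{i=1}^{\Nk-1}v_i(\Delta_i-\Delta_{\Nk})+\Delta_{\Nk}.
\]
Substituting \eqref{eq:def_G}, namely $\Delta_i-\Delta_{\Nk}=\gamma\alpha_i$ and $\Delta_{\Nk}=-\gamma\bal^\top\cc$, yields
\[
\Phi_q(\Tk u)=\gamma\bal^\top\vv-\gamma\bal^\top\cc=\gamma\,\bal^\top(\Pik(u)-\cc),
\]
which is the claimed formula. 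It also shows that the cut induced by the DM's response is $\mathcal H$ read in the increment coordinates $\Pik(u^*)$.

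\textbf{Main obstacle.} The point needing care is whether Theorem~\ref{theo:solution} genuinely permits the endpoint $r_i=x_{i+1}^k$ rather than an interior point. At this endpoint $\g^k(r_i)$ has its first $i$ entries equal to $1$, i.e.\ $\tilde r_i=1$ in Algorithm~\ref{alg:GQ}. I would re-check that the formulas \eqref{eq:def_beta}--\eqref{eq:p^B} stay well defined (no division by zero) and return nonnegative probabilities summing to one when every $\tilde r_i=1$.

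In this case the vectors $\g^k(r_1),\ldots,\g^k(r_{\Nk})$ are the cumulative unit vectors. These are linearly independent and, together with $\g^k(x_1)=0$, form a simplex containing the origin. So any sufficiently small difference vector $\Delta$ can be written as $\sum_i L_i\g^k(r_i)$. One then splits $L_i$ into positive and negative parts and puts the leftover mass on $r_0$, choosing $\gamma$ small enough that this leftover mass is nonnegative. This gives valid probabilities and confirms the construction directly.
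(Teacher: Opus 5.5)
Your proposal is correct and follows the paper's route. The paper's entire argument is the remark that the right-endpoint choice $r_i=x_{i+1}^k$ satisfies \eqref{eq:so_ex_con}, so Theorem~\ref{theo:solution} and Proposition~\ref{prop:find_query_G} apply, while Proposition~\ref{lem:no_direction_error} gives $\Phi_q(u)=\Phi_q(\Tk u)$. Your Step 3 is exactly the computation in the proof of Proposition~\ref{prop:find_query_G}, and your worry about $\tilde r_i=1$ is harmless: Theorem~\ref{theo:solution} already allows $\tilde r_i\in(0,1]$, and its formulas divide only by $\tilde r_i$ and by $\sum_m|L_m|$, which is positive because $\bal\neq 0$ and the triangular matrix of the $\g^k(r_i)$ is nonsingular.
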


The result follows from the fact that if $
 r_0=x_1^k $ and 
 $ r_i=x_{i+1}^k$ for 
 $i=1,\ldots,\Nk$ in Algorithm~\ref{alg:GQ}, then the conditions of Theorem~\ref{theo:solution} are satisfied.
Inspired by this observation, we design an adaptive-breakpoint algorithm in which every inner CPM query is
supported only on the current set of breakpoints, and every outer iteration adds exactly one new midpoint as stated in Step 2 of Algorithm \ref{alg:VCPM_CU} .

\subsection{Adaptive-breakpoint CPM Algorithm}

Section \ref{sec:univariate} introduces the tractable/implementable CPM method to generate $N_k-1$ queries at each major iteration $k$ under a $N_k$-piecewise-linear nominal utility function on $\mathcal{X}^k$. 
However, when the DM’s true utility is a general continuous utility function, the CPM on ``fixed'' $\mathcal{X}^k$ can only converge to $T_k u$ rather than $u$, even if we run \underline{many  
major iterations on $\mathcal{X}^k$}. 

To close this gap, we design 
an  algorithmic framework with two loops: 
in the outer loop called \underline{rounds}, 
we adaptively increase 
the number of breakpoints by one
in each round.
In the inner loop, called major iterations, at each round, 
we 
implement CPM 
with a fixed 
set of breakpoints.
We start 
with an initial bounded polyhedron $\PP^{0,0}\subset\R^{N_0-1}$:
\bgeqn \label{def:P^0,0}
\PP^{0,0}= \Big\{\vv\in\R^{N_0-1}\mid 0\leqslant (\RR\vv)_i\leqslant L(x_{i+1}^0-x_i^0), \ \text{for } i=1,\ldots,N_0,\; \e^\top(\RR\vv)\leqslant1\Big\}.
\edeqn 
Since $u^*\in\UU_L$, then $\Pi_0(u^*)\in \PP^{0,0}$.
At the beginning of 
round
$k$,  we 
denote the polyhedron 
of the increment vectors 
with the current set of breakpoints $\mathcal{X}^k$ as 
$\PP^{k,0}$
and 
the corresponding ambiguity set of utility functions 
by $\Uk:=
\big\{u\in\UU_L\mid\Pik(u)\in \PP^{k,0}\big\}.$

\paragraph{Inner CPM major iterations at each round.}
Fix a threshold $\epsilon\in(0,1)$. At each outer round
$k$, run $M_k$ \emph{CPM major iterations} on the current grid $\XX^k$ according to Algorithm~\ref{alg:VCPM_PLU} until
\begin{equation}\label{eq:stop_cpm}
    r(\B^{k,M_k})\leqslant \epsilon\,r(\B^{k,0}),
\end{equation}
where $\B^{k,i}$ is the MEB of $\PP^{k,i}$.
Define
\bgeqn 
 \Pkp:=\PP^{k,M_k},
 \qquad
 \Ukp:=\{u\in\UU_L:\Pik(u)\in \Pkp\}.
\edeqn

\begin{proposition}\label{prop:M_k}
When $N_0\geqslant3$, condition \eqref{eq:stop_cpm} holds when the number of CPM iterations at round $k\;(k\geqslant0)$ satisfies
\bgeqn M_k\geqslant\left\lceil\frac{2\ln\epsilon}{\ln(\Nk-2)-\ln(\Nk-1)} \right\rceil.
\edeqn 
\end{proposition}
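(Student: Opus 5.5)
The plan is to reduce the statement to the per-major-iteration radius contraction that underlies Theorem~\ref{theo:convergence_linear}, applied on the fixed grid $\XX^k$ in the increment space $\R^{\Nk-1}$. Round $k$ runs Algorithm~\ref{alg:VCPM_PLU} with $\Nk$ pieces, starting from $\PP^{k,0}$. Each major iteration places $\Nk-1$ mutually orthogonal cuts through the current MEB center. By Proposition~\ref{prop:find_query_G}, Theorem~\ref{theo:solution} and Proposition~\ref{cor:bpquery_new}, each prescribed hyperplane is realized exactly by a breakpoint-supported query. By Proposition~\ref{lem:no_direction_error}, the retained half-space contains $\Pik(u^*)$, so no direction error intervenes.

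The first step is to establish the one-iteration contraction
\[
r(\B^{k,j+1})\leqslant \sqrt{\tfrac{\Nk-2}{\Nk-1}}\; r(\B^{k,j}).
\]
The argument is as follows. $\PP^{k,j}$ lies in the ball $\B^{k,j}$ centered at $\cc^{k,j}$. After the $\Nk-1$ orthogonal cuts through $\cc^{k,j}$, the new polyhedron lies in a single orthant of that ball, after rotating coordinates so the cut directions $\bal^{k,j,i}$ become the coordinate axes. Proposition~\ref{prop:C_radius-reduction}, applied in dimension $\Nk-1$, shows this orthant is enclosed in a ball of radius $r(\B^{k,j})\sqrt{1-1/(\Nk-1)}$. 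Minimality of the MEB then gives the inequality. Proposition~\ref{prop:C_radius-reduction} needs its $N\geqslant 3$ hypothesis, here $\Nk\geqslant 3$; this holds because $\Nk\geqslant N_0\geqslant 3$ and grids only grow. This step is exactly what is used in the proof of Theorem~\ref{theo:convergence_linear}, so I will cite it rather than redo it.

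Iterating $M_k$ times gives
\[
r(\B^{k,M_k})\leqslant \Big(\tfrac{\Nk-2}{\Nk-1}\Big)^{M_k/2} r(\B^{k,0}).
\]
It then suffices that $\tfrac{M_k}{2}\ln\tfrac{\Nk-2}{\Nk-1}\leqslant \ln\epsilon$. Both logarithms are negative, since $\epsilon\in(0,1)$ and $(\Nk-2)/(\Nk-1)\in(0,1)$. Dividing by the negative quantity $\tfrac12\ln\frac{\Nk-2}{\Nk-1}$ flips the inequality and yields $M_k\geqslant 2\ln\epsilon/(\ln(\Nk-2)-\ln(\Nk-1))$. Taking the ceiling preserves this, which gives \eqref{eq:stop_cpm}.

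The main point of care is the first step. I must make sure that the orthant argument uses the MEB of the current polyhedron $\PP^{k,j}$, not the initial ball. I must also make sure the cuts in Algorithm~\ref{alg:VCPM_PLU} all pass through the same center $\cc^{k,j}$ within a major iteration, which the algorithm guarantees. Everything else is routine sign bookkeeping with the logarithms.
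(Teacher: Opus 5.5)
Your proposal is correct and follows the paper's proof: you iterate the per-major-iteration contraction factor $\sqrt{(N_k-2)/(N_k-1)}$ from Theorem~\ref{theo:convergence_linear} $M_k$ times on the fixed grid and then rearrange the logarithms in the same way. Your appeal to the absence of direction error is harmless but not needed, since the radius contraction is purely geometric and does not depend on which half-spaces are retained.
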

\begin{proof}
    By Theorem~\ref{theo:convergence_linear}, $r(\B^{k,M_k})\leqslant \sqrt{\frac{\Nk-2}{\Nk-1}}^{M_k}r(\B^{k,0})$. Thus, inequality  \eqref{eq:stop_cpm} gives rise to $\sqrt{\frac{\Nk-2}{\Nk-1}}^{M_k}\leqslant\epsilon$, and hence 
    the conclusion.
\end{proof}
At the end of the 
inner CPM major 
iterations based on grid ${\cal X}^k$, we update the grid
by adding a new breakpoint/grid point  which is the 
 midpoint of 
the interval between two consecutive breakpoints with largest mesh size,  
i.e.
\begin{equation}\label{eq-jk}
    \XX^{k+1}:=\XX^k\cup \left\{\frac{x_{j_k}^k+x_{j_k+1}^k}{2}\right\},
\qquad
j_k\in\mathop{\arg\max}_{1\leqslant i\leqslant N_k}(x_{i+1}^k-x_i^k).
\end{equation}
Since the increment vectors on the coarse and refined grids are in spaces with different dimensions, we
describe the lift first in the full increment space and then return to the standard reduced representation.

\paragraph{Polyhedron lift.}
	Let
	\bgeqn 
	x_{\mathrm{new}}^{k}:=\frac{x_{j_k}^k+x_{j_k+1}^k}{2}.
	\edeqn 
    Since $(x_{j_k}^k,x_{j_k+1}^k)$ is the widest interval, $x_{j_k+1}^k-x_{j_k}^k=h_k$.
	Define the full-increment polyhedron associated 
    with 
    $\PP^{k,+}$ by 
    $
	\widehat \PP^{k,+}
	:=
	\left\{
	\RR\vv\in\mathbb{R}^{N_k}
	\;\middle|\;
	\vv\in \PP^{k,+}
	\right\},
$
where $\RR{\vv}\in \widehat \PP^{k,+}$ satisfies $v_i\geqslant 0$ for $i=1,\dots,N_k$ and
	$\e^\top (\RR{\vv})=1$.
Given $\RR{\vv}=({v}_1,\dots,{v}_{N_k})^\top\in \widehat \PP^{k,+}$, where ${v}_{N_k}=1-\sum_{i=1}^{N_k-1}v_i$, we first obtain its widest interval $j_k$ by \eqref{eq-jk}. 
Then we define the lifted polyhedron on the refined grid in the full increment space 

\bgeqn\label{eq:P_hat}
	\widehat \PP^{k+1,0}
	:=
	\left\{
({v}_1,\dots,{v}_{j_k-1},\,a,\,{v}_{j_k}-a,\,{v}_{j_k+1},\dots,{v}_{N_k})^\top
	\in\mathbb{R}^{N_k+1}
	\;\middle|\;
\begin{array}{c}
\RR{\vv}=(v_1,\ldots,v_{N_k})^\top\in \widehat \PP^{k,+},\\
	0\leqslant a\leqslant v_{j_k}, \\
a\leqslant \frac{Lh_{k}}{2},\
	v_{j_k}-a\leqslant\frac{Lh_{k}}{2}
\end{array}
\right\}.
\edeqn
	By construction, every $\bm\eta\in \widehat \PP^{k+1,0}$ still satisfies
$\eta_i\geqslant 0$, $\e^\top \bm\eta =1.
$
Given $\widehat \PP^{k+1,0}$, we map it back to the space of the first $N_k$ attributes:
\bgeqn\label{eq:P^{k+1,0}}
\PP^{k+1,0}	:=
\left\{ \vv\in\mathbb{R}^{N_k}
	\;\middle|\;
	R\vv\in \widehat \PP^{k+1,0}
	\right\}.
\edeqn
Compared with $\widehat\PP^{k,+}$, all increments except the $j_k$-th one in $\widehat\PP^{k+1,0}$ remain unchanged, while the $j_k$-th increment is split into two nonnegative child increments, each of which is bounded by the original parent increment. We denote the MEB of $\PP^{k+1,0}$ by $\B^{k+1,0}$.
The reason we lift $\PP^{k,+}$ to $\widehat\PP^{k,+}$ and make the splitting on $\widehat\PP^{k,+}$ rather than $\PP^{k,+}$ is that the widest interval 
may happen to be the last interval $[x^k_{N_k-1},x^k_{N_k}]$ and thus we make the splitting in the lifted space.


We are now ready to present 
the complete adaptive-breakpoint CPM algorithmic procedures for 
estimating a 
general true nonlinear utility function 
in Algorithm \ref{alg:VCPM_CU}.

\vspace{0.3cm}
\begin{breakablealgorithm}\label{alg:VCPM_CU}
    \caption{Adaptive-breakpoint CPM for estimating true nonlinear utility} 
\begin{flushleft}
\textbf{Input:} initial set of breakpoints $\XX^0$ with 
$|\XX^0|=N_0+1$,
initial polyhedron $\PP^{0,0}$ defined as in \eqref{def:P^0,0}, inner precision $\epsilon\in(0,1)$, mesh precision $\tau>0$, initial mesh size $h_0$ and {round index} $k = 0$.
\\
\textbf{While} mesh size $\hk> \tau$ \textbf{do}
\end{flushleft}
\begin{itemize}[leftmargin=0pt,label={},itemsep=0.6ex]
\item\textbf{Step 1.} Run $M_k$ major CPM 
iterations with $r_0=x^k_1$ and $r_i=x^k_{i+1}$ for $i=1,\ldots,N_k$
on the fixed grid $\XX^k$ and obtain updated polyhedron $\Pkp$ satisfying condition \eqref{eq:stop_cpm}.

    \item\textbf{Step 2.}  Update ${\cal X}^k$ by adding the midpoint of one of the widest intervals: $\XX^{k+1}:=\XX^k\cup\{\frac{x_{j_k}^k+x_{j_k+1}^k}{2}\}$, where $j_k\in\mathop{\arg\max}_i(x_{i+1}^k-x_{i}^k)$. Compute mesh size $h_{k+1}$ of $\XX^{k+1}$ defined in \eqref{eq:X_k-h_k}.

   \item\textbf{Step 3.}  Construct $\PP^{k+1,0}$ on grid $\XX^{k+1}$ according to \eqref{eq:P^{k+1,0}}.

    \item\textbf{Step 4.} Set $k:=k+1$. Go back to Step 1.
\end{itemize}
\begin{flushleft}
    \textbf{Output:} $\XX^{k}$ and $\PP^{k,0}$. 
\end{flushleft}
\end{breakablealgorithm}

\subsection{Convergence of Algorithm~\ref{alg:VCPM_CU}}

One of the important features of Algorithm~\ref{alg:VCPM_CU} is that
the true utility function $u^*$ remains in the ambiguity set $\mathcal{U}^{k}$ at any stage of preference elicitation. The next lemma 
states this. 

\begin{lemma}
[Projection and consistency of the lift]\label{prop:lift_projection}
Define the projection $ \pi_k:\R^{\Nk}\to\R^{\Nk-1}$
by
\bgeqn\label{def:projetion}
 \pi_k\left(v_1,\ldots,v_{j_k-1},v_{j_k}^-,v_{j_k}^+,v_{j_k+1},\ldots,v_{\Nk-1}\right)
 :=\left(v_1,\ldots,v_{j_k-1},v_{j_k}^-+v_{j_k}^+,v_{j_k+1},\ldots,v_{\Nk-1}\right).
\edeqn
Then the following statements hold for $k=1,2,\cdots.$
\begin{enumerate}[label=(\roman*),leftmargin=1.5em]
    \item $\pi_k(\PP^{k+1,0})=\Pkp$;
    \item if $u^*\in\Ukp$, then $u^*\in\UU^{k+1}$;
    \item $\UU^{k+1}\subseteq \Ukp$.
\end{enumerate}
\end{lemma}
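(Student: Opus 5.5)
The plan is to work throughout in the full increment space $\R^{\Nk}$, via the recovery map $\RR$, rather than in the reduced coordinates. $\RR$ is a bijection between $\R^{\Nk-1}$ and the hyperplane $\{\bm\eta:\e^\top\bm\eta=1\}$, so every statement can be transported back at the end. I read $\pi_k$ as acting on full increment vectors, merging the two child components $j_k$ and $j_k+1$ into one. This also covers the case where the widest interval is the last one, which is precisely why the lift \eqref{eq:P_hat} is done in the full space. With this reading, (ii) and (iii) follow from the telescoping of increments, while (i) needs one extra invariant.

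For (i), the inclusion $\pi_k(\PP^{k+1,0})\subseteq\Pkp$ is immediate from \eqref{eq:P_hat}. Every lifted element has the form $(v_1,\dots,a,v_{j_k}-a,\dots,v_{\Nk})$ with $\RR\vv\in\widehat\PP^{k,+}$, and merging the two children returns $\RR\vv$. The reverse inclusion is the main obstacle. Given $\RR\vv\in\widehat\PP^{k,+}$, I must exhibit some $a$ with $0\leqslant a\leqslant v_{j_k}$, $a\leqslant Lh_k/2$ and $v_{j_k}-a\leqslant Lh_k/2$. Such an $a$ exists if and only if $v_{j_k}\leqslant Lh_k$, in which case $a=v_{j_k}/2$ works.

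To guarantee $v_{j_k}\leqslant Lh_k$, I would prove by induction on $k$ the invariant
\[
0\leqslant(\RR\vv)_i\leqslant L\,(x^k_{i+1}-x^k_i)\quad\text{for all }i\text{ and all }\vv\in\PP^{k,0}.
\]
For $k=0$ this is built into \eqref{def:P^0,0}. The invariant passes to $\Pkp\subseteq\PP^{k,0}$, because CPM cuts only intersect with half-spaces. It then passes to $\PP^{k+1,0}$ because the untouched components keep their intervals, and the two children obey the bound $Lh_k/2$. Since $x^k_{j_k+1}-x^k_{j_k}=h_k$, each child interval has length exactly $h_k/2$. Applying the invariant with $i=j_k$ gives $v_{j_k}\leqslant Lh_k$, which closes (i).

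For (ii), suppose $\Pik(u^*)\in\Pkp$. Then $\Pi_{k+1}(u^*)$ agrees with $\Pik(u^*)$ except that the $j_k$-th increment is split into
\[
a=u^*(x^k_{\rm new})-u^*(x^k_{j_k}),\qquad u^*(x^k_{j_k+1})-u^*(x^k_{\rm new}).
\]
Monotonicity gives both pieces nonnegative, so $0\leqslant a\leqslant v_{j_k}$. The Lipschitz property gives each piece at most $Lh_k/2$. Hence $\RR\Pi_{k+1}(u^*)\in\widehat\PP^{k+1,0}$, so $u^*\in\UU^{k+1}$.

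For (iii), take $u\in\UU^{k+1}$, so $\Pi_{k+1}(u)\in\PP^{k+1,0}$. By telescoping, $\pi_k$ applied to the full increment vector of $u$ on $\XX^{k+1}$ equals the full increment vector on $\XX^k$, that is, $\pi_k(\RR\Pi_{k+1}(u))=\RR\Pik(u)$. By the easy inclusion in (i), this vector lies in $\widehat\PP^{k,+}$. Hence $\Pik(u)\in\Pkp$, and since $u\in\UU_L$, we get $u\in\Ukp$.
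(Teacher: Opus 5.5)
Your proposal is correct and follows essentially the same route as the paper: the easy inclusion in (i) by merging the two children, the reverse inclusion via the split $a=v_{j_k}/2$ using $v_{j_k}\leqslant Lh_k$, (ii) by splitting the true increment at $x^k_{\rm new}$ using monotonicity and Lipschitz continuity, and (iii) by telescoping together with the easy half of (i). Your explicit induction for $0\leqslant(\RR\vv)_i\leqslant L(x^k_{i+1}-x^k_i)$ and your full-increment reading of $\pi_k$, which also covers the case $j_k=\Nk$, make precise what the paper only invokes as ``the interval-wise Lipschitz bounds on $\XX^k$.''
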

The mapping $\pi_k$ 
recombines two child increments caused by the bisection of the interval
$[x^k_{j_k},x^k_{j_k+1}]$ into the original parent
increment. 
Part (i) of the lemma ensures that the lifted polyhedron projects exactly onto $\Pkp$.
Part (ii) asserts that
the true utility is not excluded by inserting the new breakpoint, i.e., $\Pi_k(u^*)\in\Pkp,\PP^{k+1,0}$.
Part (iii) ensures that every utility function feasible on the
refined grid remains feasible when restricted to the previous grid.
With the lemma, we can 
derive the relationship between $\B^{k+1,0}$ (the MEB of $\PP^{k+1,0}$ lifted from $\PP^{k,+}$)
and $\B^{k,+}$ (the MEB of $\PP^{k,+}$) in terms of their radius.

\begin{theorem}[Quantitative radius control under one-coordinate lift]\label{thm:lift_bound}
Let $\eta_k:=\max\{(\RR\vv)_j\mid \RR{\vv}\in \widehat \PP^{k,+},\;j=1,\ldots,N_k\}.$
Then
\bgeqn
\label{eq:Thm5.1-radius-lift}
r(\B^{k+1,0})
 \leqslant
 \sqrt{\Big(r(\B^{k,+})\Big)^2+{\eta_k^2}}
 \leqslant
 r(\B^{k,+})+\eta_k
 \leqslant
 r(\B^{k,+})+L\hk.
\edeqn 
\end{theorem}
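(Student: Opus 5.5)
The plan is to exhibit an explicit ball that contains $\PP^{k+1,0}$ and has radius at most $\sqrt{r(\B^{k,+})^2+\eta_k^2}$. Since the MEB has the smallest radius among all enclosing balls, this gives the first inequality in \eqref{eq:Thm5.1-radius-lift}. The middle inequality is the elementary fact $\sqrt{a^2+b^2}\leqslant a+b$ for $a,b\geqslant0$. The last inequality reduces to showing $\eta_k\leqslant Lh_k$.

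\textbf{Step 1 (candidate center).} Let $\cc$ be the MEB center of $\PP^{k,+}$ and $r:=r(\B^{k,+})$. Lemma~\ref{lemma:MEBcenter} gives $\cc\in\PP^{k,+}$, so every full coordinate satisfies $0\leqslant(\RR\cc)_j\leqslant\eta_k$. I would take as candidate center the lift of $\RR\cc$ that splits the $j_k$-th full increment evenly, $(\RR\cc)_{j_k}/2+(\RR\cc)_{j_k}/2$, and then drop the last full coordinate to obtain a point of $\R^{N_k}$.

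\textbf{Step 2, case $j_k<N_k$.} Take a point of $\PP^{k+1,0}$ generated by $\vv\in\PP^{k,+}$ and a split value $a$. Every reduced coordinate other than the two children agrees with the corresponding coordinate of $\vv$. Write $s:=v_{j_k}-c_{j_k}$ and $d:=a-v_{j_k}/2$. The children's deviations from the center are then $s/2+d$ and $s/2-d$, and their squares sum to $s^2/2+2d^2$. Hence the squared distance to the candidate center equals
$$\|\vv-\cc\|_2^2-\tfrac{s^2}{2}+2d^2\leqslant r^2+2d^2 .$$
The constraint $0\leqslant a\leqslant v_{j_k}$ gives $|d|\leqslant v_{j_k}/2\leqslant\eta_k/2$. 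Therefore $2d^2\leqslant\eta_k^2/2\leqslant\eta_k^2$.

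\textbf{Step 2, case $j_k=N_k$.} This is the delicate bookkeeping case caused by the reduced representation, and it is the one I expect to be the main obstacle. Here the split increment is the implicit last one, $v_{N_k}=1-\e^\top\vv$. The new reduced vector is $(\vv,a)$: the child $v_{N_k}-a$ becomes the dropped last coordinate. The squared distance to $(\cc,(\RR\cc)_{N_k}/2)$ is therefore
$$\|\vv-\cc\|_2^2+\big(a-(\RR\cc)_{N_k}/2\big)^2 .$$
Both $a$ and $(\RR\cc)_{N_k}/2$ lie in $[0,\eta_k]$, so the second term is at most $\eta_k^2$. Again we get squared distance at most $r^2+\eta_k^2$. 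This case is exactly why membership of the MEB center in $\PP^{k,+}$ (Lemma~\ref{lemma:MEBcenter}) is needed.

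\textbf{Step 3 ($\eta_k\leqslant Lh_k$).} I would prove by induction on $k$ that every point of $\widehat\PP^{k,0}$ has each full increment bounded by $L$ times the length of its interval.
\begin{itemize}
\item The base case holds by the box constraints in \eqref{def:P^0,0}.
\item The bound is preserved when passing to $\PP^{k,+}\subseteq\PP^{k,0}$, since CPM only intersects with half-spaces.
\item The bound is preserved under the lift. Unsplit increments keep their intervals. The two children satisfy $a\leqslant Lh_k/2$ and $v_{j_k}-a\leqslant Lh_k/2$, and each child interval has length exactly $h_k/2$.
\end{itemize}
Since every interval length is at most $h_k$, this yields $\eta_k\leqslant Lh_k$, which completes the chain of inequalities.
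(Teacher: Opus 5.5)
Your proposal is correct and follows the paper's proof essentially step for step. You use the same candidate center (the even split of the $j_k$-th coordinate of the MEB center of $\PP^{k,+}$), the same two-case distance computation with Lemma~\ref{lemma:MEBcenter} supplying $0\leqslant(\RR\cc)_{N_k}\leqslant\eta_k$ when $j_k=N_k$, and the same interval-wise Lipschitz bound for $\eta_k\leqslant L\hk$; your version is slightly cleaner in not requiring the candidate center to lie in $\PP^{k+1,0}$ (minimality of the MEB over all centers suffices) and in making the induction behind the Lipschitz bounds explicit.
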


The inequalities \eqref{eq:Thm5.1-radius-lift} ensure that 
the radius of $\B^{k+1,0}$ does not increase from the radius of $\B^{k,+}$
by ${L\hk}$.
The term 
arises from the estimation of the new
components $a$ and 
$v_{j_k}-a$ of the increment vector in the lifted polyhedral $\PP^{k+1,0}$.
Next, we derive a relationship between mesh size $h_k$ and the number of outer rounds $k$. Specifically, we derive an upper bound on $h_k$ in terms of $k$.  The next proposition states this.

\begin{proposition}[Reduction of mesh size]
\label{prop:h_k}
Consider Algorithm~\ref{alg:VCPM_CU}.
Let $h_k :=\max_{1\leqslant i\leqslant \Nk}(x_{i+1}^k-x_i^k)$.
Then 
$h_k\leqslant\frac{2N_0h_0}{k+N_0}$.
\end{proposition}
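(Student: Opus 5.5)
The proof is a counting argument on the intervals of the grid $\XX^k$, using two facts: each round adds exactly one breakpoint, and every interval produced by a bisection is at least half the mesh size in force when it was produced.

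\textbf{Step 1: preliminaries.} The case $k=0$ is trivial, since $h_0\leqslant 2h_0$. For $k\geqslant1$, recall that $\XX^{k+1}$ is obtained from $\XX^k$ by bisecting a widest interval, as in \eqref{eq-jk}. Adding a point can only shorten intervals, so $\{h_j\}$ is nonincreasing. After $k$ rounds the grid $\XX^k$ has exactly $N_0+k$ intervals.

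\textbf{Step 2: classify the intervals of $\XX^k$.} Each interval of $\XX^k$ is one of two kinds:
\begin{itemize}
\item an \emph{original} interval of $\XX^0$ that has never been bisected;
\item a \emph{created} interval, which arose at some round $j<k$ as one of the two halves of a widest interval of $\XX^j$. It may have been further bisected since, in which case we count its descendants instead.
\end{itemize}
Let $s$ be the number of unsplit original intervals. The remaining $N_0+k-s$ intervals are created intervals (or their descendants).

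\textbf{Step 3: lower bound on created intervals.} Any interval of $\XX^k$ that is a created interval was born at some round $j<k$ with length $h_j/2$, and is still unsplit. By monotonicity $h_j/2\geqslant h_k/2$, so every created interval currently in $\XX^k$ has length at least $h_k/2$.

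\textbf{Step 4: upper bound on their total length.} The created intervals exactly partition the union of the $N_0-s$ original intervals that have been split. Each original interval has length at most $h_0$, so
\[
(N_0+k-s)\,\frac{h_k}{2}\;\leqslant\;(N_0-s)\,h_0 .
\]
If $s=N_0$, then no split has occurred, which forces $k=0$, already handled. Otherwise $m:=N_0-s\in\{1,\ldots,N_0\}$, and
\[
h_k\leqslant \frac{2m h_0}{m+k}.
\]
The map $m\mapsto m/(m+k)$ is nondecreasing, so taking $m=N_0$ gives $h_k\leqslant 2N_0h_0/(N_0+k)$, which is the claim.

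\textbf{Main obstacle.} Unsplit original intervals may be arbitrarily short. A naive bound on the total length, $(x_{N_0+1}^0-x_1^0)\leqslant N_0h_0$, therefore only yields $h_k\leqslant 2N_0h_0/k$, which misses the $+N_0$ in the denominator. The fix is to compare the created intervals only against the split original intervals they partition, and then use the monotonicity of $m/(m+k)$. The one point to verify carefully is that a created interval keeps length at least $h_k/2$ until it is itself split; this holds because intervals change only when they are bisected.
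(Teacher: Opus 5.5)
Your proof is correct, and it takes a different route from the paper's.

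The paper uses doubling epochs. Within an epoch, the mesh size halves in at most as many rounds as there are intervals at the start of that epoch. By induction this gives $h_{(2^r-1)N_0}\leqslant h_0/2^r$. Intermediate $k$ are then covered by monotonicity with $r=\lfloor\log_2(1+k/N_0)\rfloor$, and the factor $2$ in the final bound is the loss from this rounding.

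You instead get the bound for every $k$ at once. You combine a single invariant (every non-original interval of $\mathcal{X}^k$ has length at least $h_k/2$) with length conservation restricted to the split original intervals. In your version the factor $2$ comes from the halving invariant rather than from rounding.

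What your approach buys:
\begin{itemize}
\item It avoids the epoch induction and the floor function.
\item It makes explicit the counting that the paper's ``worst case'' wording leaves implicit: each bisection of an interval longer than the target removes one such interval, and an earlier halving only leaves fewer intervals.
\item It gives the slightly finer estimate $h_k\leqslant 2mh_0/(m+k)$, where $m\leqslant\min\{N_0,k\}$ is the number of split original intervals.
\end{itemize}

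What the paper's approach buys: the sharper value $N_0h_0/(k+N_0)$ at the checkpoints $k=(2^r-1)N_0$, where your bound is weaker by a factor $2$. Both arguments yield the same uniform bound $2N_0h_0/(k+N_0)$.
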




We are now ready to state the main convergence result of this section.

\begin{theorem}[Convergence of Algorithm~\ref{alg:VCPM_CU} in the utility function space]
	\label{thm:utility_space_convergence}
	Let 
    $\delta:=\overline{x}-\underline{x}$. Assume 
    $u^*\in \UU^0$. Then
    for every $u\in \UU^k$,
\bgeqn 
\label{eq:one-step_delta_u}
\dd_K(u^*,u)\ \leqslant\ \dd_I(u^*,u)\ 
\leqslant	\frac{\delta L h_k}{2}+2\delta\sqrt{N_k-1
}\,r(\B^{k,0}) \leqslant \frac{\delta L h_k}{2}	
+	2\delta\sqrt{N_k-1
}
		\left[\epsilon^k r(\B^{0,0})+L
        \sum_{i=0}^{k-1}\epsilon^i h_{k-1-i}
		\right]    
\edeqn 
 and 
$\sup_{u\in\UU^k}\|u-u^*\|_\infty \to 0$ as $k\to\infty$.
\end{theorem}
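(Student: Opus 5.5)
The plan is to split the error by the triangle inequality through the piecewise-linear interpolants on the current grid $\XX^k$, and then to bound $r(\B^{k,0})$ by unrolling a recursion across rounds. The first inequality $\dd_K\leqslant\dd_I$ is taken directly from the comparison already used in Theorem~\ref{thm:conv_univariate_metric} (Definition~\ref{def:kan_kolmo}). I will also use that $\dd_I(u,u')=\delta\|u-u'\|_\infty$, as in \eqref{eq:conv_metric_bound}.

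\textbf{Step 1: $u^*$ stays feasible.} By induction on $k$ using Lemma~\ref{prop:lift_projection}(ii), I will show $u^*\in\UU^k$ for all $k$. The base case is $u^*\in\UU^0$. Within a round, CPM cuts never remove $\Pik(u^*)$. This follows from Proposition~\ref{cor:bpquery_new}, because all queries are breakpoint-supported, so $\Phi_q(u^*)=\gamma\,\bal^\top(\Pik(u^*)-\cc)$ and the observed response is exactly the cut. Hence $u^*\in\Ukp$, and the lemma then gives $u^*\in\UU^{k+1}$.

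\textbf{Step 2: the three-term split.} For any $u\in\UU^k$ I write
\[
\|u^*-u\|_\infty\leqslant\|u^*-\Tk u^*\|_\infty+\|\Tk u^*-\Tk u\|_\infty+\|\Tk u-u\|_\infty .
\]
\begin{itemize}
\item The first and third terms are each at most $L\hk/4$ by Proposition~\ref{lem:pla_error}, since both $u$ and $u^*$ lie in $\UU_L$. Together they contribute $\delta L\hk/2$ after scaling by $\delta$.
\item For the middle term, $\Tk u^*$ and $\Tk u$ are PLUs on $\XX^k$ whose reduced increment vectors $\Pik(u^*),\Pik(u)$ both lie in $\PP^{k,0}$.
\item I reuse the argument behind \eqref{eq:conv_metric_bound}: the sup-norm of the difference is attained at a breakpoint, where it equals a partial sum of the increment differences. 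The last increment is determined by the normalization, so every such partial sum is bounded by $\|\vv'-\vv''\|_1\leqslant\sqrt{N_k-1}\,\|\vv'-\vv''\|_2\leqslant 2\sqrt{N_k-1}\,r(\B^{k,0})$.
\end{itemize}
This yields the second inequality in \eqref{eq:one-step_delta_u}.

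\textbf{Step 3: unrolling the radius.} By the stopping rule \eqref{eq:stop_cpm}, $r(\B^{k,+})\leqslant\epsilon\,r(\B^{k,0})$. Combining this with Theorem~\ref{thm:lift_bound} gives
\[
r(\B^{k+1,0})\leqslant\epsilon\,r(\B^{k,0})+L\hk .
\]
A straightforward induction then gives $r(\B^{k,0})\leqslant\epsilon^k r(\B^{0,0})+L\sum_{i=0}^{k-1}\epsilon^i h_{k-1-i}$, which is the final inequality.

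\textbf{Step 4: convergence, the main obstacle.} The difficulty is that the factor $\sqrt{N_k-1}$ grows, since $N_k=N_0+k$. I would handle each term separately.
\begin{itemize}
\item By Proposition~\ref{prop:h_k}, $\hk\leqslant 2N_0h_0/(k+N_0)\to0$, so the first term vanishes.
\item The term $\sqrt{N_0+k}\,\epsilon^k\to0$ because the geometric decay dominates.
\item For the convolution term, split the sum at $i=\lfloor k/2\rfloor$. For $i\leqslant k/2$, use $h_{k-1-i}\leqslant 4N_0h_0/(k+2N_0-2)$ and $\sum_i\epsilon^i\leqslant 1/(1-\epsilon)$; this part is $O(1/k)$. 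For $i>k/2$, use $h_j\leqslant h_0$ together with the tail $\sum_{i>k/2}\epsilon^i\leqslant\epsilon^{k/2}/(1-\epsilon)$.
\end{itemize}
Thus the bracket is $O(1/k)$, and multiplying by $\sqrt{N_0+k}$ gives $O(k^{-1/2})\to0$. Hence $\sup_{u\in\UU^k}\|u-u^*\|_\infty\to0$.
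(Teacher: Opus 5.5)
Your proposal is correct and follows the paper's proof essentially step for step: the same three-term triangle split through $T_k u$ and $T_k u^*$, the same bound $\|T_ku-T_ku^*\|_\infty\leqslant\|\Pi_k(u)-\Pi_k(u^*)\|_1\leqslant\sqrt{N_k-1}\,\|\Pi_k(u)-\Pi_k(u^*)\|_2\leqslant 2\sqrt{N_k-1}\,r(\B^{k,0})$, and the same recursion $r(\B^{k+1,0})\leqslant\epsilon\,r(\B^{k,0})+Lh_k$ unrolled by induction. Your explicit check that $u^*$ remains in $\UU^k$, and your split of the convolution sum at $i=\lfloor k/2\rfloor$, supply details the paper leaves implicit; the paper only asserts that the bracket multiplied by $\sqrt{N_k-1}$ is $O(1/\sqrt{k})$.
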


The theorem quantifies convergence of the utility estimates in $\UU^k$ toward the true utility $u^*$. 
The term $\delta L h_k/2$ reflects the error introduced by the piecewise-linear approximation. 
The term $2\delta\sqrt{N_k-1}\,r(\B^{k,0})$ accounts for the residual ambiguity remaining after the CPM cuts. 
To the best of our knowledge, this provides the first theoretical guarantee of convergence for general nondecreasing Lipschitz-continuous univariate utility functions using a polyhedral method.

\subsection{Numerical illustration}\label{subsec:continuous_example}

In this subsection, we present a numerical example to illustrate how adaptive-breakpoint CPM elicits a general nonlinear
utility function in practice. 
We consider a true utility function $u^*:[0,4]\rightarrow[0,1]$ 
$$
u^*(x)= \frac{1}{1-e^{-24}}(1-e^{-6x}), \quad \text{for}\; x\in[0,4],
$$
which is monotone increasing, normalized and $L=\frac{6}{1-e^{-24}}$-Lipschitz continuous on $[0,4]$. 
We carry out two tests with different initial breakpoints 
$
\mathcal X^0_{(1)}=\{0,0.6,2,4\}
$ and 
$
\mathcal X^0_{(2)}=\{0,3,3.5,4\}.
$
For both tests, the initial ambiguity set is chosen as
$
\mathcal P^{0,0}
=
\left\{
\vv\in\mathbb R^2
\;\middle|\;
0\leqslant v_i\leqslant \min\{1,L(x_{i+1}-x_i)\}\; \text{for}\;i=1,2;
\vv^\top \bm e\leqslant 1
\right\},
$
which 
provides monotonicity, normalization, and Lipschitz continuity.
All other settings are the same.

We then run Algorithm~\ref{alg:VCPM_CU}
in the two cases 
to 
construct queries and use the true utility to generate the responses.
We set 
$\epsilon=0.5$ and the stopping tolerance $\tau=0.5$. 
We report the results of the two tests in Tables~\ref{tab:continuous_example_1}~and~\ref{tab:continuous_example_2},
with the following information: 
the number of rounds $k$, 
the dimension of the reduced increment 
$N_k-1$, the mesh size $h_k$, the radius of MEB $r(\mathcal B^{k,0})$ at the beginning of round $k$, 
the radius of MEB $r(\mathcal B^{k,+})$ after $M_k(N_k-1)$ cuts in round $k$,
the number $M_k$ of major CPM iterations, the number of cuts implemented in each round, the cumulative number of queries (N.q),
the distance from the true utility to the current ambiguity set
in the sense of Kantorovich distance 
$
\max_{u\in\mathcal U^k} \dd_K(u^*,u) 
$
and 
scaled Kolmogorov distance 
$
\max_{u\in\mathcal U^k} \dd_I(u^*,u).
$


\begin{table}[!htbp]
	\setlength\tabcolsep{6pt}
	\renewcommand{\arraystretch}{1.2}
	\centering
	\caption{Performance
    of 
    Algorithm~\ref{alg:VCPM_CU}
    in Test 1 with initial breakpoint set $\mathcal X^0_{(1)}=\{0,0.6,2,4\}$.}
	\label{tab:continuous_example_1}
	\begin{tabularx}{\textwidth}{
        p{1.5cm}<{\centering} 
        p{1cm}<{\centering}
        p{1cm}<{\centering}
        p{1.5cm}<{\centering}
        p{1.5cm}<{\centering}
        p{0.6cm}<{\centering}
        p{0.6cm}<{\centering}
        p{0.8cm}<{\centering}
        p{2.3cm}<{\centering}
        p{2.3cm}<{\centering} }
		\hline
round $k$ &  $\Nk-1$   & $\hk$ & $r(\B^{k,0})$ 
& $r(\B^{k,+})$  &$M_k$ & 
Cuts &N.q&$\max\dd_K(u^*,u)$ &$\max\dd_I(u^*,u)$
 \\
		\hline	
0     & 2     & 2     & 0.7071  
& 0.1768  & 2     & 4  &4   & 0.3837  & 1.8596  \\
    1     & 3     & 1.4   & 0.1768       & 0.0786  & 1     & 3  &7   & 0.2585  & 1.6527  \\
    2     & 4     & 1     & 0.0969      & 0.0292  & 2     & 8   &15  & 0.1704  & 1.5429  \\
    3     & 5     & 1     & 0.0292       & 0.0139  & 1     & 5  &20   & 0.1569  & 1.5125  \\
    4     & 6     & 0.7   & 0.0139       & 0.0036  & 2     & 12  &32  & 0.1540  & 1.5101  \\
    5     & 7     & 0.7   & 0.0214       & 0.0058  & 2     & 14 &46   & 0.1493  & 1.5095  \\
    6     & 8     & 0.6   & 0.0058      & 0.0015  & 2     & 16 &62   & 0.1481  & 1.5063  \\
    7     & 9     & 0.5   & 0.6882       & 0.1720  & 2     & 18 &80   & 0.0750  & 0.9152  \\
    8     & 10    & 0.5   & 0.1720       & 0.0430  & 2     & 20  &100  & 0.0566  & 0.7982  \\
		\hline
	\end{tabularx}
\end{table}

\begin{table}[!htbp]
	\setlength\tabcolsep{6pt}
	\renewcommand{\arraystretch}{1.2}
	\centering
	\caption{Performance
    of 
    Algorithm~\ref{alg:VCPM_CU}
    in Test 2 with initial breakpoint set $\mathcal X^0_{(2)}=\{0,3,3.5,4\}$.}
	\label{tab:continuous_example_2}
	\begin{tabularx}{\textwidth}{
        p{1.5cm}<{\centering} 
        p{1cm}<{\centering}
        p{1cm}<{\centering}
        p{1.5cm}<{\centering}
        p{1.5cm}<{\centering}
        p{0.6cm}<{\centering}
        p{0.6cm}<{\centering}
        p{0.8cm}<{\centering}
        p{2.3cm}<{\centering}
        p{2.3cm}<{\centering} }
		\hline
round $k$ &  $\Nk-1$   & $\hk$ & $r(\B^{k,0})$ 
& $r(\B^{k,+})$  &$M_k$ & 
Cuts&N.q &$\max\dd_K(u^*,u)$ &$\max\dd_I(u^*,u)$
 \\
		\hline	      
    0     & 2     & 3     & 0.7071  
    & 0.1768  & 2     & 4  &4   & 1.7711  & 3.3037  \\
    1     & 3     & 1.5   & 0.7071      & 0.1768  & 2     & 6  &10   & 0.9584  & 2.8384  \\
    2     & 4     & 1.5   & 0.7071       & 0.2099  & 2     & 8  &18   & 0.5833  & 2.2408  \\
    3     & 5     & 0.75  & 0.2099       & 0.0623  & 2     & 10  &28  & 0.3058  & 1.9022  \\
    4     & 6     & 0.75  & 0.7070       & 0.1809  & 2     & 12  &40  & 0.1576  & 1.2456  \\
    5     & 7     & 0.75  & 0.1809       & 0.0462  & 2     & 14 &54   & 0.0889  & 0.9994  \\
    6     & 8     & 0.75  & 0.0462       & 0.0127  & 2     & 16 &70   & 0.0723  & 0.9662  \\
    7     & 9     & 0.5   & 0.0127       & 0.0042  & 2     & 18  &88  & 0.0661  & 0.9451  \\
		\hline
	\end{tabularx}
\end{table}

We have the following major observations from Tables~\ref{tab:continuous_example_1}~and~\ref{tab:continuous_example_2}:

\begin{itemize}
\item[(a)] \textbf{Convergence of the mesh size $h_k$.}
    The breakpoint updating rule bisects one of the longest intervals at the end of each outer round. 
    Hence, in Tables~\ref{tab:continuous_example_1} and~\ref{tab:continuous_example_2}, although the mesh size $h_k$ may remain unchanged for several consecutive rounds (as the widest interval is not unique in this case), it decreases in a stepwise manner and converges to zero, in alignment with Proposition~\ref{prop:h_k}.
    This explains why the approximation error induced by the piecewise-linear interpolation gradually reduces to zero as the outer round index $k$ increases.

    \item[(b)] \textbf{
    Non-monotonic reduction of the MEB radii.}
    The sequences $\{r(\mathcal B^{k,0})\}$ and $\{r(\mathcal B^{k,+})\}$ are not monotonically decreasing 
    across outer rounds of iterations $k$ in  Tables~\ref{tab:continuous_example_1} and~\ref{tab:continuous_example_2}.
For instance, in Table~\ref{tab:continuous_example_1}, 
$r(\mathcal B^{4,0})=0.0139$ whereas $r(\mathcal B^{5,0})=0.0214$,
and $r(\mathcal B^{4,+})=0.0036$ whereas $r(\mathcal B^{5,+})=0.0058$.
A similar phenomenon can be observed when $k$ changes from $6$ to $7$.
This phenomenon can be explained 
from the following relationships between $\PP^{k,+}$ and $\PP^{k+1,0}$:
    $$
        \PP^{k,0}
        \xrightarrow{\text{inner CPM cuts}}
        \PP^{k,+}
        \xrightarrow{\text{breakpoint updating and lift}}
        \PP^{k+1,0}.
    $$
    The first operation is an inner iteration 
    process of CPM 
    with a fixed 
    set of breakpoints 
    $\XX^k$. By the stopping rule of 
    Algorithm~\ref{alg:VCPM_CU},
    the number $M_k$ of inner major iterations is chosen so that the radius is reduced by at least a prescribed factor $\epsilon=0.5$, i.e.,
    $r(\mathcal B^{k,+}) \leqslant 0.5 
    r(\mathcal B^{k,0})$. This explains why 
    the numbers in column $r(\B^{k,+})$ are strictly less than those in column $r(\B^{k,0})$ at each row.

    The second operation lifts the ambiguity set to a higher-dimensional increment space after a new breakpoint is added.
    The radii of the new MEB 
    in the lifted space
    might increase, i.e., 
 $r(\mathcal B^{k+1,0})>r(\mathcal B^{k,+})$. 
 As a result of the two operations,
  $r(\mathcal B^{k+1,0})$ is not guaranteed to be smaller than $r(\mathcal B^{k,0})$.

 




    \item[(c)] \textbf{Controlled increase from $r(\mathcal B^{k,+})$ to $r(\mathcal B^{k+1,0})$.}
   By Theorem~\ref{thm:lift_bound}, 
   the increase from $r(\mathcal B^{k,+})$ to $r(\B^{k+1,0})$ is bounded 
   by $Lh_k$.
To see how the bound works, observe for instance that
in Table~\ref{tab:continuous_example_1},
$r(\B^{7,0})-r(\B^{6,+})=0.6867<Lh_6=3.60$,
and in Table~\ref{tab:continuous_example_2},
 $r(\mathcal B^{4,0})-r(\mathcal B^{3,+})=0.6447<{Lh_3}=4.50$.
The bounds are 
loose in the two cases. However,
as $h_k$ decreases further,
the bound effect 
will be more significant.
Together with the 
guaranteed reduction from $r(\mathcal B^{k,0})$
to $r(\mathcal B^{k,+})$,
we can
manage the convergence of the radii in Algorithm~\ref{alg:VCPM_CU}.

\item[(d)] \textbf{ 
 The number of queries in a round.}
The number of queries in round~$k$, given by $M_k (N_k-1)$ (running CPM $M_k$ times, each requiring $N_k-1$ queries), is not predetermined. Instead, $M_k$ is adaptively chosen to ensure
$
r(\mathcal B^{k,+}) \leqslant \epsilon\, r(\mathcal B^{k,0}),
$
as stated in Proposition~\ref{prop:M_k}.
In both tables, we set $\epsilon = 0.5$, although it can be chosen as any positive constant less than~$1$ to ensure convergence. Moreover, if $\epsilon$ is adaptively selected such that
$
\epsilon=\epsilon_k < 1-\frac{L h_k}{r(\B^{k,0})},
$
then 
$$r(\B^{k+1,0})\leqslant r(\B^{k,+})+ L h_k
\leqslant 
\epsilon_k r(\B^{k,0})+L h_k
<
\left[1-\frac{L h_k}{
r(\B^{k,0})}\right]r(\B^{k,0})+L h_k
<
r(\B^{k,0}),
$$
in which case the 
sequence
$\{ r(\mathcal B^{k,0}) \}$ 
is column-wise monotonically decreasing.

    \item[(e)] \textbf{Convergence in the functional space.}
The monotonicity of the sequences $\{r(\mathcal B^{k,0})\}$ and $\{r(\mathcal B^{k,+})\}$ is not the key concern. Rather, what truly matters is the monotonic decrease in the size of the ambiguity set of utility functions $\mathcal{U}^k$ corresponding to $\mathcal P^k$ in the functional space. This behavior can be observed from the last two columns in Tables~\ref{tab:continuous_example_1} and~\ref{tab:continuous_example_2}, which show that the size of the ambiguity set decreases monotonically in terms of both functional distances.
    

\item[(f)] \textbf{Role of breakpoints.} 
The choice of the initial 
set of  
breakpoints 
may affect 
the performance of the algorithm.
Comparing round 1 in Tables~\ref{tab:continuous_example_1} and~\ref{tab:continuous_example_2}, the two tests have similar values of $h_k$, but have 
remarkably different values of $\max_{u\in\mathcal{P}^k} \dd_K(u^*,u)$ ($\max_{u\in\mathcal{P}^k} \dd_I(u^*,u)$, resp.). 
This shows that the mesh size alone does not capture fully the size of the ambiguity set
in the utility function 
space as indicated in inequality \eqref{eq:one-step_delta_u}.
The key issue here lies with
the spread
of the breakpoints.
In Test 1, the widest initial interval lies in a relatively flat region of the true utility (see Figure~\ref{fig:Test1}), so its early refinement leads only to a limited reduction in $\max_{u\in\mathcal{P}^k} \dd_K(u^*,u)$ (see Table~\ref{tab:continuous_example_1}). By contrast, in Test 2, although many initial breakpoints are located in the flat part of the utility function, the widest initial interval lies in the region where the utility changes most rapidly (see Figure~\ref{fig:Test2}). 
Hence, the bisection rule 
refines the most relevant interval while the dimension of the increment space is still relatively small. As a result, the worst-case Kantorovich distance falls below $0.1$ 
after 54 queries in Table~\ref{tab:continuous_example_2} (with even larger initial $h_k$), compared with 80 queries  in Table~\ref{tab:continuous_example_1}.
However, the impact of the spread of the initial
breakpoints is reduced after more
breakpoints are added,
for example,
after 100 queries in this experiment.
Thus, prior information about where the utility function varies most may be 
used to set initial breakpoints 
and improve the 
efficiency of the algorithm 
in the initial rounds
without affecting 
the overall 
convergence.

\item[(g)] \textbf{Shape information.}
In this test, we do not include any information about concavity, convexity or S-shapedness of the true utility function in the implementation of CPM. 
However, if there is prior information about shape, then we need to incorporate additional constraints on the polyhedra in the algorithm. 
In Appendix~\ref{sec:convergence_concavity}, we report the numerical test results when the true utility function is 
concave.

\end{itemize}

\begin{figure}[!htbp]
    \centering
    \begin{subfigure}
        {0.49\linewidth}
\includegraphics[width=\linewidth]{figures/Continuous/WorstPL1.png}
\caption{\label{fig:WPL1}}
     \end{subfigure}
    \begin{subfigure}{0.49\linewidth}
    \centering
    \includegraphics[width=0.9\linewidth]{figures/Continuous/Range1.png}
   \caption{\label{fig:Range1}}    
    \end{subfigure}
    \caption{Test 1 with $\mathcal X^0_{(1)}=\{0,0.6,2,4\}$. (a) Worst-case utility function $\mathop{\arg\max}_{u\in\mathcal U^k} \dd_K(u^*,u)$.
    (b) Range of utility function values corresponding to $\mathcal U^k$.\label{fig:Test1}}
\end{figure}
\begin{figure}[!htbp]
    \centering
    \begin{subfigure}{0.49\linewidth}
\includegraphics[width=\linewidth]{figures/Continuous/WorstPL2.png}
\caption{\label{fig:WPL2}}
    \end{subfigure}
    \centering
    \begin{subfigure}{0.49\linewidth}
    \centering
    \includegraphics[width=0.9\linewidth]{figures/Continuous/Range2.png}
   \caption{\label{fig:Range2}}    
    \end{subfigure}
\caption{Test 2 with $\mathcal X^0_{(2)}=\{0,3,3.5,4\}$. (a) Worst-case utility function $\mathop{\arg\max}_{u\in\mathcal U^k} \dd_K(u^*,u)$.
(b) Range of utility function values corresponding to $\mathcal U^k$.
\label{fig:Test2}}
\end{figure}

\section{Application to a pricing problem for a wireless headset}
\label{sec:numerical}


In this section, we apply the proposed CPM method
to  
a robust pricing problem for a wireless headset in a multivariate linear utility setting and compare its performance with that of the polyhedral method over a set of metrics introduced below.
We consider customers' preferences 
on a wireless headset product. The product is characterized by four attributes: 
battery life (hours per charge) 
$x_{\text{battery}}$, 
wearing comfort
$x_{\text{comfort}}$, 
active noise cancellation (ANC) level (performance in noisy settings) 
$x_{\text{ANC}}$, 
and price
$p$.
We denote the vector of the four 
attributes 
by $\x=(p,x_{\text{battery}},x_{\text{comfort}},x_{\text{ANC}})^\top$. As in the literature of discrete choice models, 
we assume that customers' preferences can be characterized by a linear utility function 
\begin{equation}
    u(\x)= 
    (\RR \vv)^\top \x =  
    v_1 p+ v_2 x_{\text{battery}} + v_3 x_{\text{comfort}} +v_4 x_{\text{ANC}},    
\end{equation}
where $\vv=(v_1,v_2,v_3)^\top$ is the reduced partworth vector and
$\RR \vv=(v_1,v_2,v_3,v_4)^\top$ is the lifted vector with $v_4=1-\bm e^\top\vv$.  The four partworth components 
are constrained with 
$-1\leqslant v_1
< 0$, $0
<
v_j\leqslant 1$ for $j=2,3,4$, 
and $ \bm e^\top (\RR \vv)=1$.


\subsection{Setup of the tests
\label{Sec:Setup}}

We randomly generate 100 customers by sampling from
truncated Gaussian distributions
$v_1\sim\mathcal{N}_{[-1,0]}(-0.70,0.05^2)$,
${v}_{2}\sim\mathcal{N}_{[0,1]}(0.70,0.05^2)$, 
${v}_{3}\sim\mathcal{N}_{[0,1]}(0.40,0.05^2)$ and computing the last partworth by $v_{4}=1-v_1-v_2-v_3$
to form 100 true partworths $\RR\vv^*_{n}=(v^*_{n,1},v^*_{n,2},v^*_{n,3},v^*_{n,4})^\top$, $n=1,\ldots,100$.
We apply the proposed CPM to each of the customers
with the same initial polyhedron of the partworth vector:
$
\mathcal{P}_n^0
= \{{\vv}\in\R^3\mid -\bm{e}\leqslant {\vv}\leqslant\bm{e}\},
$
for $n=1,\cdots,100$ without knowing
the sign of the first three components of the true 
partworth vector. In each test, we compare the results with those of the polyhedral method 
by \citet{toubia2004polyhedral}.
We set the 
maximum number of 
queries to $T=30$, 
reflecting practical query-budget limitations.
In the case of CPM, it means that there 
are 10 major iterations ($k=0,\ldots,9$) and 3 cuts in each iteration ($i=1,2,3$) in 
Algorithm~\ref{alg:VCPM_LU}, totaling 30 queries. 
To ease the notation, we
 denote the polyhedron of partworth vector
updated after $t$ queries with customer $n$ 
by $\mathcal{P}_n^t$, $t=1,\ldots,T$. For CPM, $\mathcal{P}_n^t$ corresponds to $\mathcal{P}_n^{k,i}$ with $t=k*(N-1)+i$.
We denote the center (AC for POLY and MEB center for CPM) of the polyhedron by $\widehat{\vv}^k_n$.

\subsection{Diagnostic metrics}


We compare the 
CPM and POLY from two perspectives: 
(a) reduction of the 
diameter of 
$\mathcal{P}_n^t$ reflecting the reduction of ambiguity of the true partworth after $t$ queries, 
(b) the quality of the estimates of the partworth vector obtained from the center of the 
polyhedra (AC for POLY and MEB center for CPM) in terms of attribute sign error and utility regret. We denote the center by $\RR\widehat \vv^{%
t}_{n}=(\widehat v^{%
t}_{n,1},\widehat v^{%
t}_{n,2},\widehat v^{%
t}_{n,3},\widehat v^{%
t}_{n,4})$ 
after $t$ interactions 
on DM $n$.
\begin{itemize}
  
\item 
\textbf{The size of the ambiguity set $\mathcal{P}_n^t$ (Polyhedron diameter)}. 
The diameter of the polyhedron is calculated by
$
d_n^t = \max_{\vv,\vv' \in \mathcal{P}_n^t }\lVert \vv - \vv' \rVert_2.
$
As we discussed in Theorem \ref{theo:convergence_linear},
this is one of the key metrics
to be used to examine 
the efficiency of the CPM. 
Figure~\ref{fig:Ep1}(e) depicts the comparative performance 
of CPM and POLY on this metric over 100 independent 
customers. The solid curves represent the mean values across the 100 customers
and shaded 
areas represent the range 
of the values across the 100 customers.

\item 
\textbf{
Partworth sign correctness}.
At each iteration, we define {\color{black}the sign correctness indicator of the 
AC/MEB
center}
\bgeqn
\mathrm{SC}^{t}_{n}
=
\sum_{j=1}^{4} \mathds{1}
\!\Big(
\operatorname{sign}(\widehat v^{t}_{n,j})
= 
\operatorname{sign}({v}^{*}_{n,j})
\Big),
\edeqn
where $\mathds{1}_S$ is the indicator function of set $S$,
and 
{\color{black}the worst-case sign correctness indicator of the polyhedron
\bgeqn
\mathrm{WSC}^{t}_{n}
=
\inf_{\vv\in\mathcal{P}^{t}_n}
\sum_{j=1}^{4} \mathds{1}
\!\Big(
\operatorname{sign}( (\RR\vv)_{j})
= 
\operatorname{sign}({v}^{*}_{n,j})
\Big).
\edeqn
}
This metric examines the
accuracy of the 
estimated attribute components in terms of their signs instead of 
 magnitude.
The metric is closely related to the “hit rate” in \citep{huber1993effectiveness,saure2019ellipsoidal}, which is used to measure predictive accuracy at the choice level, defined as the proportion of correctly predicted choices for each respondent.
Figure~\ref{fig:Ep1}(a) shows the attribute sign correctness of the 
AC/MEB center and Figure~\ref{fig:Ep1}(b)
shows the worst-case sign correctness  
indicator of the polyhedron.

\item \textbf{Utility estimation regret.}
Let 
$\mathcal{X}_{\text{cand}}=[0,1]^4$
be the feasible set of the candidate product designs.
We define the regret
\bgeqn
\mathrm{Regret}_n^{t}
=
\mathop{\max}\limits_{\x\in\mathcal X_{\text{cand}}} \left[(\RR{\vv}_n^*)^{\top}\x
-
(\RR\widehat \vv^{t}_n)^\top \x\right],
\edeqn
which 
measures the maximum 
difference between the true utility and the utility with the partworth vector estimated via 
the AC and MEB centers respectively over feasible set of attributes.
We also 
compare the worst-case utility regret of the polyhedron defined by
\bgeqn
\mathrm{WRegret}_n^{t}
= \sup_{\vv\in\mathcal{P}^{t}_n}
\mathop{\max}\limits_{\x\in\mathcal X_{\text{cand}}}\left[ (\RR{\vv}_n^*)^{\top}\x
-
(\RR \vv)^\top \x\right],
\edeqn
which 
measures the worst-case difference 
over the feasible set of partworth vectors.


\end{itemize}


\begin{figure}[htbp]
 \vspace{-3mm}
    \centering
    \begin{subfigure}[t]{0.48\textwidth}
        \centering
        \includegraphics[width=\textwidth]{figures/BatteryExample/Ep1-center-sign_sum.png}
        \caption{
        }\label{fig:SC1}
    \end{subfigure}
    \begin{subfigure}[t]{0.48\textwidth}
        \centering
       \includegraphics[width=\textwidth]{figures/BatteryExample/Ep1-wc_sign_sum.png}
        \caption{}\label{fig:wc-SC1}
    \end{subfigure}\\
    \begin{subfigure}[t]{0.48\textwidth}
        \centering
        \includegraphics[width=\textwidth]{figures/BatteryExample/Ep1-center-utility_regret.png}
        \caption{}\label{fig:UR1}
    \end{subfigure}
       \begin{subfigure}[t]{0.48\textwidth}
        \centering
        \includegraphics[width=\textwidth]{figures/BatteryExample/Ep1-wc_utility_regret.png}
        \caption{\label{fig:wc-UR1}}
    \end{subfigure}
    \\
    \begin{subfigure}[t]{0.48\textwidth}
        \centering
        \includegraphics[width=\textwidth]{figures/BatteryExample/Ep1-max_dist.png}
        \caption{\label{fig:PD1}}
    \end{subfigure}
    \caption{\footnotesize Comparative analysis of CPM and POLY 
    over 100 customers (solid curves indicate the means and shaded areas indicate the ranges).
    (a) 
    Partworth sign correctness based on the AC/MEB centers.
    (b) Worst-case partworth sign correctness.
    (c) Utility regret based on the AC/MEB centers.
    (d) Worst-case utility regret.
    (e) Polyhedron diameter.
    \label{fig:Ep1}
    }

    \end{figure}
We have made the following major observations from the test results 
plotted in Figure \ref{fig:Ep1}.

\begin{itemize}

\item[(a)]  
Figure~\ref{fig:Ep1}(a)--(b) depict the change of $\mathrm{SC}^{k}_{n}$
and $\mathrm{WSC}^{k}_{n}$ respectively as the number of queries increases.
We can see that
CPM achieves better sign correctness overall than POLY in both cases. 
For each customer, both the sign correctness of the AC/MEB center and the worst-case sign correctness over the polyhedron are nondecreasing; the former (center-based) is customer-specific, whereas the latter (worstcase-based) holds in general and is guaranteed by the nestedness of the polyhedra.
After sufficient queries, CPM identifies the correct signs for all customers: specifically, ensuring that all partworths in the polyhedron have the correct signs requires about 10 queries, whereas the MEB center reaches full sign correctness after only 2 queries. 
These empirical numbers of queries required to reach full sign correctness are substantially less than the sufficient worst-case bound (39 queries in this case) in Corollary~\ref{cor:sign correctness},
indicating that the theoretical guarantee is conservative and that CPM may  perform considerably better in practice.


\item[(b)]  
In Figures~\ref{fig:Ep1}(c)--(d), CPM displays better overall performance in terms of utility regret. 
In the case of estimates based on the AC/MEB centers, CPM achieves a smaller average regret in most rounds and reduces the regret to nearly zero faster, with a generally narrower range across 
the 100 customers. 
As for the worst-case utility regret,
 CPM performs generally better than POLY except 
 at round 4. 
The worst-case
regret under CPM decreases more rapidly and approaches zero earlier. 


\item[(c)] Figure~\ref{fig:Ep1}(e) shows that 
the diameter of the polyhedron under CPM decreases monotonically as the number of queries increases, and 
the 
curves are identical across 
all 100 customers. The ``deterministic'' nature of the behavior of the CPM is specific to the present experimental setting, primarily because all decision makers (DMs) share the same symmetric initial polyhedron. More importantly, the monotonic decrease in the polyhedral diameter under CPM is theoretically guaranteed by Theorem~\ref{theo:convergence_linear}.
Likewise, the diameter under POLY also decreases monotonically. However, when the number of queries increases from 5 to 20, the magnitude of reduction varies across the 100 customers, which
forms the blue shaded area. 
This variation highlights that CPM achieves a more consistent and reliable reduction in diameter than POLY.

\end{itemize}
In summary, CPM displays better overall performance under the specified metrics.






\subsection{Downstream decision: robust pricing under preference uncertainty}\label{sec:pricing}

We go one step further to consider a downstream robust personalized pricing problem 
\begin{subequations}\label{eq:robust_pricing}
\begin{align}
\max_{p\in[0,1]} \quad & p, \label{eq:robust_pricing_obj}\\
\text{s.t.}\quad &(\RR{\vv})^\top(p,(\x^{\text{new}})^\top)^\top \;\geqslant\; (\RR{\vv})^\top(p^{\text{base}},(\x^{\text{base}})^\top)^\top,
\qquad \forall 
{\vv}\in \mathcal{P}_n^k,
\label{eq:robust_pricing_con}
\end{align}
\end{subequations}
where, 
$\x^{\text{new}}=(x^{\text{new}}_{\text{battery}},x^{\text{new}}_{\text{comfort}},x^{\text{new}}_{\text{ANC}})=(0.8,0.2,0.6)$ denotes the non-price attribute vector of the new product to be priced,
$\x^{\text{base}}=(x^{\text{base}}_{\text{battery}},x^{\text{base}}_{\text{comfort}},x^{\text{base}}_{\text{ANC}})=(0.7,0.4,0.5)$ and $p^{\text{base}}=0.7$ are the attributes and the price of a reference product offered by the competitor.  
Model~\eqref{eq:robust_pricing} aims to determine the maximal price such that the DM would choose the new product over the base product, when the DM's preference is ambiguous and characterized by the polyhedron $\mathcal{P}_n^k$ generated through the CPM/POLY interaction process.
This model reflects the practical use of individual-level preference information in personalized pricing, for instance on some online retail platforms.
The solution method for \eqref{eq:robust_pricing} is discussed in Appendix~\ref{app:robust-pricing}.

We denote the optimized price with respect to $\mathcal{P}_n^k$ generated by CPM and POLY as $\hat{p}_n^k(\mathrm{CPM})$ and $\hat{p}_n^k(\mathrm{POLY})$, respectively.
As a benchmark, we set $\mathcal{P}_n^k$ to the singleton containing the true utility $u_n^*$ and derive the ground-truth optimized price $p_n^*$.
We compare CPM and POLY through the price gaps $p_n^*-\hat{p}_n^k(\mathrm{CPM})$ and $p_n^*-\hat{p}_n^k(\mathrm{POLY})$.
Figure~\ref{fig:pg1} reports how the price gap varies with the number of queries $k$, where the connected line represents the average over 100 DMs and the shaded region represents the range.

\begin{figure}[htbp]
    \centering
        \centering
        \includegraphics[width=0.6\textwidth]{figures/BatteryExample/Ep1-price_gap.png}
    \caption{Average price gap across 100 DMs after each query\label{fig:pg1}}
    \end{figure}

From Figure~\ref{fig:pg1}, we observe that the price gaps of both CPM and POLY converge to zero as the number of queries increases. 
For the same query budget, CPM generally yields a smaller price gap than POLY, which is consistent with its stronger contraction of the ambiguity set shown in Figure~~\ref{fig:Ep1}(e) and its better worst-case utility regret shown in Figure~\ref{fig:Ep1}(d). 
Moreover, the range of the price gap under CPM is narrower than that under POLY, indicating more stable pricing performance across DMs.

\section{Concluding remarks}\label{sec:conclusion}

In this paper, we propose a coordinate-wise polyhedral method for eliciting a decision maker’s true utility function. While the approach addresses a gap in the literature by establishing theoretical, quantitative convergence guarantees for polyhedral methods in preference elicitation, several avenues for further research remain.
First, our method constructs cutting planes using minimum enclosing balls (MEBs), and subsequently designs queries in a sequential manner. An alternative approach would be to employ minimum-volume outer ellipsoids to generate cuts and guide query design, which may lead to more efficient contraction of the feasible polyhedron. We adopt MEBs primarily because they allow for a more tractable derivation of convergence rates, despite the extensive literature on ellipsoid-based methods for reducing polyhedral volume (see, e.g., \cite{damla2008linear}).
Second, to mitigate directional errors, our current framework restricts pairwise comparison lotteries to outcomes that lie within the existing set of breakpoints of the piecewise-linear utility function.
This restriction could be relaxed in future work by allowing outcomes outside the breakpoint set, for example, through the introduction of slack variables into the linear constraints that define the ambiguity set.
Third, we assume that no errors occur during the preference elicitation process. In practice, however, errors may arise from both the decision maker (DM) and the modeler. In particular, CPM involves queries with multiple outcomes, which may increase the likelihood of incorrect responses. Although statistical approaches can account for such response errors, they often lack the computational efficiency of deterministic polyhedral methods. A promising direction for future research is to combine the computational efficiency of CPM with probabilistic modeling techniques, thereby developing more robust and effective methods for preference elicitation under response noise or random utility models.
Fourth, the current CPM framework for nonlinear utility functions focuses on the univariate case. Extending it to the multivariate setting would be of considerable interest, as many practical decision-making problems involve multiple attributes. When the multivariate utility function is additively separable into univariate components, we anticipate that our approach can be extended with only minor modifications.
Finally, although this paper focuses on CPM for utility function elicitation, the framework could be generalized to other settings, such as the elicitation of risk measures, including spectral risk measures and utility-based risk measures. We leave these directions for future research.

\vspace{0.3cm}
\noindent
\textbf{Acknowledgments.}
This project is supported by RGC 
grant (No. 14204624), the National Key R\&D Program of China (No. 2022YFA1004000) and National Natural Science Foundation of China (No. 12371324).

\bibliographystyle{plainnat}
\bibliography{reference}

\appendix
\newpage
\section{Appendix: proofs}
\subsection{Proof of Lemma~\ref{lemma:MEBcenter}}
The result is perhaps well-known. We include a proof for completeness.
It suffices to show that for any $\x_{\text{out}}\notin\ \Omega$, there always exists a point $\x_{in}\in \Omega$ such that 
$\max_{\bm{y}\in\Omega}\|\x_{in}-\bm{y}\|_2<\max_{\bm{y}\in\Omega}\|\x_{\text{out}}-\bm{y}\|_2.$
Let
$\x^*$ be an orthogonal projection of $\x_{\text{out}}$ on $\Omega$.
Since $\Omega$ is a nonempty bounded closed convex set and $\x_{\text{out}}\notin\ \Omega$, we have $\|\x_{\text{out}}-\x^*\|_2>0$ and $\langle \x_{\text{out}}-\x^*,\bm{y}-\x^*\rangle\ <0,\ \forall \bm{y}\in\Omega$.
Thus
\begin{eqnarray*}
\|\x_{\text{out}}-\bm{y}\|^2_2
&=&\|\x_{\text{out}}-\x^*\|^2_2+ \|\x^*-\bm{y}\|^2_2
+2\langle\x_{\text{out}}-\x^*,\x^*-\bm{y}\rangle 
\\
 &>& \|\x_{\text{out}}-\x^*\|^2_2+\|\x^*-\bm{y}\|^2_2\\
 &>& \|\x^*-\bm{y}\|^2_2,
   \end{eqnarray*}
   which implies
%
$\max_{\bm{y}\in\Omega}\|\x^*-\bm{y}\|_2<\max_{\bm{y}\in\Omega}\|\x_{\text{out}}-\bm{y}\|_2.
$
The conclusion follows.
\hfill $\Box$

\subsection{Proof of Proposition~\ref{prop:C_radius-reduction}}
To ease the exposition, 
consider ball $\mathcal B(\bm0,r)$ with $\cc=0$. 
Consequently
$
\mathcal O=\{\vv\in\R^{N-1}_+:\|\vv\|_2\leqslant r
\}.
$
We seek the MEB of $\mathcal O$,
which reduces to solving the following minimization problem:
\bgeqn 
\label{eq:MEB-minimax}
c^*\in \mathop{\arg\min}_{c'\in\mathbb R^{N-1}}\sup_{v\in\mathcal O}\|v-c'\|_2.
\edeqn 
{By definition, $
r^*=\sup_{v\in\mathcal O}\|v-c^*\|_2$. Moreover,}  by permutation symmetry of $\mathcal O$,
the 
minimizer $\cc^*$ must have all coordinates equal, hence $\cc^*=s\cdot\bm{e}$ for some scalar $s\in\R_+$. Thus 
solving \eqref{eq:MEB-minimax} corresponds to solving
the following minimax optimization problem
\bgeqn 
\label{eq:MEB-minimax-1}
\min_{s\in\R_+} \Phi(s):=\max_{\vv\in\mathcal O} h_s(\vv):=\|\vv-s\bm{e}\|^2_2.
\edeqn 
	We first characterize $\Phi(s)$ for a fixed $s\geqslant0$. For any
	$\vv\in\mathcal O$ with $0<\|\vv\|_2<r$, define
$$
	\bm{w}:=\frac{r}{\|\vv\|_2}\vv,
	\qquad
	\lambda:=\frac{\|\vv\|_2}{r}\in(0,1).
$$
Then $\bm{w}\in\mathcal O$ with $\|\bm{w}\|_2=r$, and $\vv$ can be written as $\vv=\lambda \bm{w}+(1-\lambda)\bm0.$
Since $h_s(\cdot)$ is convex, we have
$$
h_s(\vv)\leqslant
\lambda h_s(\bm{w})+(1-\lambda)h_s(\bm0)
\leqslant
\max\{h_s(\bm{w}),h_s(\bm0)\}.
$$
Hence the maximum of $h_s$ over $\mathcal O$ is attained either at
$\bm0$ or on the spherical boundary
$\{\vv\in\mathbb R^{N-1}_+\mid\|\vv\|_2=r\}$.
On the spherical boundary, we have
$$
h_s(\vv)=\|\vv-s\bm e\|^2_2=r^2-2s\sum_{i=1}^{N-1} v_i+{(N-1)}s^2=r^2-2s\|\vv\|_1+({N-1})s^2.$$
Since $\vv\in\mathbb R^{N-1}_+$ and $\|\vv\|_2=r$, we have
	$\|\vv\|_1\geqslant \|\vv\|_2=r$, and equality holds when $\vv=r \bm{e}_j$ for $j\in\{1,\ldots,{N-1}\}$,
$\bm e_j=(0,\dots,0,\underset{j\text{-th}}{1},0,\dots,0)^\top$.
Therefore,
$\max\limits_{
\vv\in\mathbb R^{N-1}_+, \|\vv\|_2=r}h_s(\vv)=
	r^2-2sr+{(N-1)}s^2$.
On the other hand, $h_s(\bm0)=(N-1)s^2$.
Consequently, $\Phi(s)	=	\max\left\{
	(N-1)s^2,\,
	r^2-2sr+(N-1)s^2
	\right\}$.
	
If $0\leqslant s\leqslant r/2$, then
$r^2-2sr+(N-1)s^2\geqslant (N-1)s^2,$
and hence
$\Phi(s)=r^2-2sr+(N-1)s^2$.
This quadratic function is minimized at
$s^*=\frac{r}{N-1}$.
Since $N-1\ge2$, we have $s^*=\frac{r}{N-1}\leqslant r/2$. Therefore,
$
\Phi(s^*)
=
r^2-\frac{2r^2}{N-1}+\frac{r^2}{N-1}
=
r^2\left(1-\frac{1}{N-1}\right).
$

If $s\geqslant r/2$, then
$
\Phi(s)=(N-1)s^2,
$
whose minimum over $[\frac{r}{2},\infty)$ is $\frac{(N-1)r^2}{4}$. Since
$N-1\geqslant2$, $\frac{(N-1)r^2}{4}\geqslant r^2\left(1-\frac{1}{N-1}\right)$.

Therefore, the optimal solution of \eqref{eq:MEB-minimax-1} is $s^*=\frac{r}{N-1}$.
Thus, for $\mathcal O=\{\vv\in\R^{N-1}_+:\|\vv\|_2\leqslant r
\}$, its MEB center is
$
\cc^*=\frac{r}{N-1}\bm e,
$
and its radius is
$r^*=r\sqrt{1-\frac{1}{N-1}}.$
Summarizing the discussions above and transforming 
back to the original center yields the claimed MEB center and radius:
$
\cc^*=\cc+\frac{r}{N-1}\bm{e}, r^*=r\sqrt{1-\frac{1}{N-1}}.
$
The proof is complete. \hfill $\Box$

\subsection{Proof of Theorem~\ref{thm:find_query}}
By the definition of the separating hyperplane (see \eqref{eq:sep_def}), 
we only need to verify that 
$\x^A, \x^B$ 
defined by \eqref{eq:def_x} satisfies: (i) for every $\vv\in \mathcal{H}$,  $(\RR\vv)^\top(\x^A-\x^B)=0$; 
and (ii) For any $\vv^+\in {\cal H}_{+}$ and $\vv^-\in {\cal H}_{-}$, $[(\RR\vv^{+})^\top(\x^A-\x^B)][(\RR\vv^{-})^\top(\x^A-\x^B)] \leqslant0$.

We first prove (i). For any $\vv\in {\cal H}=\{\vv\in \R^{N-1}\mid\bal^\top(\vv-\bm{c})=0\}$,  we write $\vv=(v_1,\dots,v_{N-1})^\top$ and recall that
$\RR\vv=(v_1,\dots,v_{N-1},1-\sum_{i=1}^{N-1}v_i)^\top$. Then
\begin{subequations}
    \begin{eqnarray}
   ({\RR\vv})^\top(\x^A-\x^B) &=&\sum^{N-1}_{i=1}v_i(x^A_i-x^B_i)+\left(1-\sum_{i=1}^{N-1}v_i\right)(x^A_N-x^B_N) \\
   &\overset{\eqref{eq:query_B}}{=}&\sum^{N-1}_{i=1}v_i[(x^A_N-x^B_N)+\alpha_i]+\left(1-\sum_{i=1}^{N-1}v_i\right)(x^A_N-x^B_N) 
        \\
      &  =&\sum^{N-1}_{i=1}v_i\alpha_i+(x_N^A-x_N^B) \label{eq:query_diffrence}
        \\
       & \overset{\eqref{eq:query_A}}{=}&\bal^\top(\vv-\cc) \\
       & =&0 .
    \end{eqnarray}
\end{subequations}
Next,  we prove (ii). For any $\vv^+\in\mathcal{H}_+$, it satisfies $\bal^\top(\vv^+-\cc)\geqslant0$ and for any $\vv^-\in\mathcal{H}_-$, it satisfies $\bal^\top(\vv^--\cc)\leqslant0$. Consequently
\begin{subequations}
    \begin{align*}
        &[({\RR\vv^+})^\top(\x^A-\x^B)][({\RR\vv^-})^\top(\x^A-\x^B)]\\
      \overset{\eqref{eq:query_diffrence}}{=}  &\bigg[\sum^{N-1}_{i=1}v_i^+\alpha_i+(x_N^A-x_N^B)\bigg]\bigg[\sum^{N-1}_{i=1}v_i^-\alpha_i+(x_N^A-x_N^B)\bigg]\\
      \overset{\eqref{eq:query_A}}{=}   & \bigg(\sum^{N-1}_{i=1}v_i^+\alpha_i-\sum^{N-1}_{i=1}\alpha_ic_i\bigg)\bigg(\sum^{N-1}_{i=1}v_i^-\alpha_i-\sum^{N-1}_{i=1}\alpha_ic_i\bigg)\\
      =   & [\bal^\top(\vv^+-\cc)][\bal^\top(\vv^--\cc)]\leqslant0.
    \end{align*}
\end{subequations}
Combining (i) and (ii) shows that the hyperplane $\mathcal H$ is indeed the separating
hyperplane of the pairwise query $(A,B)$. 
The proof is complete. \hfill $\Box$

\subsection{Proof of Theorem~\ref{theo:convergence_linear}}
The inclusions follow directly from \eqref{eq:update_rule}.
Below, we prove \eqref{eq:convergence_linear}.
By assumption, $\vv^*\in\mathcal{P}^0$.
Since there is no response error in the elicitation process, 
then $\vv^*
\in \mathcal{P}^{k,i}$, for $i=1,\ldots,N-1$.
Denote the MEB of the polyhedron $\mathcal{P}^{k}$
by $\mathcal{B}^k$ with center $\cc^k$ and radius $r(\B^k)$.
We prove that
$r(\mathcal{B}^{k})$
decreases at least at a rate of $\sqrt{\frac{N-2}{N-1}}$.


Define
$
\mathcal{E}:=\left\{\bm\sigma=(\sigma_1,\cdots,\sigma_{N-1}) \;\middle|\; \sigma_i\in\{1,-1\},\;\text{for}\;  i=1,\cdots,N-1\right\}.
$
For each $\bm\sigma\in\mathcal E$, let
\bgeq
\mathcal O_{\bm\sigma}^{k}:=
\left\{\bm v\in\mathcal B^{k}
\;\middle|\;\sigma_s(v_s-c^{k}_s)\geqslant0,\ s=1,\cdots,N-1
\right\}. 
\edeq
Then $\mathcal O_{\bm\sigma}^{k}$
corresponds to one of the 
 $2^{N-1}$ congruent orthants of $\mathcal{B}^{k}$.
We denote the MEB of $\mathcal O_{\bm\sigma}^k$ by $\widehat{\mathcal{B}}_{\bm\sigma}^k$ with radius $r(\widehat{\mathcal{B}}_{\bm\sigma}^k)$.

At the $k$-th iteration, 
we conduct $N-1$ cuts 
on ${\cal P}^k$
with
mutually orthogonal hyperplanes in the $\R^{N-1}$.
All these cutting planes pass through the center $\bm{c}^k$ of $\mathcal{B}^k$.
By rotation, after these $N-1$ cuts, there exists a $\bm\sigma\in\mathcal{E}$, such that $\PP^{k+1}=\mathcal P^{k,N-1}\subseteq\mathcal O_{\bm\sigma}^k$. 
 By the definition MEB, 
the radius of the MEB of $\PP^{k+1}$ is less or equal to the radius of 
$O_{\bm\sigma}^k$, i.e.,
$r(\B^{k+1})\leqslant r(\widehat{\mathcal{B}}_{\bm\sigma}^k)$.
Moreover, by Proposition~\ref{prop:C_radius-reduction},
$r(\widehat{\mathcal{B}}_{\bm\sigma}^k)=\sqrt{\frac{N-2}{N-1}} r(\mathcal{B}^{k})$.
Hence,
$$
r(\mathcal{B}^{k+1}) \leqslant \sqrt{\frac{N-2}{N-1}} r(\mathcal{B}^{k})\leqslant  \cdots \leqslant \left(\sqrt{\tfrac{N-2}{N-1}}\right)^{\,k+1}r(\mathcal{B}^0).
$$
Moreover, since $\mathcal P^{k,i}\subset\mathcal B^k$, for $i=1,\cdots, N-1$,
then for every $\vv\in\mathcal P^{k,i}$,
$$
\| \vv - \vv^* \|_\infty\leqslant\|\vv-\vv^*\|_2 \leqslant\max\limits_{\vv',\vv''\in \mathcal{P}^{k,i}
}\|\vv'-\vv''\|_2
\leqslant 
2r(\mathcal B^k)\leqslant 2 r(\mathcal{B}^0)\left(\sqrt{\tfrac{N-2}{N-1}}\right)^{\,k},
$$
which is \eqref{eq:convergence_linear}.
The convergence $\mathcal P^{k}\to\{\vv^*\}$ 
follows from the inequality above since $\left(\sqrt{\tfrac{N-2}{N-1}}\right)^{\,k}\to 0$,
as $k\to\infty$.
The proof is complete. \hfill $\Box$

\subsection{Proof of Corollary~\ref{cor:sample-complex}}
By Theorem~\ref{theo:convergence_linear}, 
when $k\geqslant \left\lceil\frac{2[\log\epsilon-\log2r(\B^0)]}{\log(N-2)-\log(N-1)}\right\rceil$,
we have 
$2r({\cal B}^k) \leqslant 2r(\B^0)\left(\sqrt{\tfrac{N-2}{N-1}}\right)^{\,k}\leqslant\epsilon$.
The total number of queries
is $(N-1)\left\lceil\frac{2[\log\epsilon-\log2r(\B^0)]}{\log(N-2)-\log(N-1)}\right\rceil$
in that at each iteration $k$,
there are $(N-1)$-cuts (queries).
The conclusion follows as $\PP^{k,i} \subset \B^{k}$.
The proof is complete. \hfill $\Box$

\subsection{Proof of Corollary~\ref{cor:sign correctness}}
For $v_i,\; i=1,\ldots,N-1$, by \eqref{eq:convergence_linear},  $\|\vv-\vv^*\|_\infty\leqslant 2 r(\mathcal{B}^0)\left(\sqrt{\tfrac{N-2}{N-1}}\right)^{\,k}$. 
While for $v_N$, since $v_N=1-\sum_{i=1}^{N-1}v_i$, $\vert v_N-v_N^*\vert\leqslant(N-1)\|\vv-\vv^*\|_\infty\leqslant 2(N-1) r(\mathcal{B}^0)\left(\sqrt{\tfrac{N-2}{N-1}}\right)^{\,k}$.
For $k\geqslant \left\lceil\frac{2[\log v_{\min}^*-\log2(N-1)r(\B^0)]}{\log(N-2)-\log(N-1)}\right\rceil$, we have $2 (N-1)r(\mathcal{B}^0)\left(\sqrt{\tfrac{N-2}{N-1}}\right)^{\,k}\leqslant v_{\min}^*$.
If $v_i^*>0$, then 
$v_i \geqslant v_i^*-\|v-v^*\|_\infty \geqslant v_i^*-v_{\min}^* \geqslant 0.$
Similarly, if $v_i^*<0$, then
$v_i \leqslant v_i^*+\|v-v^*\|_\infty \leqslant v_i^*+v_{\min}^* \leqslant 0.$
The proof is complete. \hfill $\Box$
\subsection{Proof of Proposition~\ref{prop:find_query_G}}
It suffices to verify that $\G^A-\G^B$ satisfy \eqref{eq:sep_def_PLU}(a)-\eqref{eq:sep_def_PLU}(b). 
Let $\Delta\G=\G^A-\G^B$. By \eqref{eq:def_G},
\begin{align*}
(\RR\vv)^\top\Delta\G
&=\sum_{i=1}^{N-1} v_i\,\Delta G_i +\Big(1-\sum_{i=1}^{N-1}v_i\Big)\Delta G_N \\
&=\sum_{i=1}^{N-1} v_i(\Delta G_N+\gamma\alpha_i)
+\Delta G_N-\sum_{i=1}^{N-1} v_i\Delta G_N \\
&=\gamma\sum_{i=1}^{N-1} v_i\alpha_i+\Delta G_N \\
&=\gamma\,\bal^\top\vv-\gamma\,\bal^\top\bm{c}
=\gamma\,\bal^\top(\vv-\bm{c}).
\end{align*}
Thus, for any $\vv\in{\cal H}$, $(\RR\vv)^\top\Delta\G=\bal^\top(\vv-\bm{c})=0$ 
which verifies \eqref{eq:sep_def1_PLU}.
Moreover, for any $\vv^+\in{\cal H}_+$ and $\vv^-\in{\cal H}_-$, since
$\bal^\top(\vv^+-\bm{c})>0$ and $\bal^\top(\vv^--\bm{c})<0$, and $\gamma>0$, then 
$(\RR\vv^+)^\top\Delta\G>0$ and $(\RR\vv^-)^\top\Delta\G<0$, yielding
$
\big[(\RR\vv^+)^\top\Delta\G\big]\big[(\RR\vv^-)^\top\Delta\G\big]<0,
$
which verifies \eqref{eq:sep_def2_PLU}. 
\qed 


\subsection{Proof of Theorem~\ref{theo:solution}}\label{sec:proof_thm4.1}
Let $\rr=(r_0,\ldots,r_{N})\in\R^{N+1}$ be a given vector satisfying 
     \eqref{eq:so_ex_con}.
     Then
     $\g(r_0)=\bm{0}\in\R_+^N$, and 
     \bgeq 
     \g(r_i)=\Big( \underbrace{1, \dots, 1}_{i-1}, \frac{r_i - x_i}{x_{i+1} - x_i}, \underbrace{0, \dots, 0}_{N-i} \Big)^\top:=\Big(\underbrace{1, \dots, 1}_{i-1}, \tilde{r}_i, \underbrace{0, \dots, 0}_{N-i} \Big)^\top\in\R_+^N,\ i=1,\ldots,N,
     \edeq 
     where $\tilde{r}_i\in(0,1].$
Let ${\cal G}({\bm r})=(\g(r_1),\cdots,\g(r_N)) 
$. Then ${\cal G}({\bm r})$ is an $N\times N$
upper triangle matrix with diagonals $\tilde{r}_i$, for $i=1,\cdots,N$. Then \eqref{eq:def_G} can be 
rewritten concisely as
\bgeqn
\label{eq:G(p-q)=beta}
{\cal G}({\bm r})(\p^A-\p^B) =\gamma\bbe
\edeqn
where 
\bgeqn \label{eq:def_beta}
\bbe=\left(-\sum^{N-1}_{i=1}\alpha_ic_i+\alpha_1,\ldots,-\sum^{N-1}_{i=1}\alpha_ic_i+\alpha_{N-1},-\sum^{N-1}_{i=1}\alpha_ic_i\right)^\top,
\edeqn 
 The solution to the system of equations \eqref{eq:G(p-q)=beta}
  can be obtained from the last  equation with 
  \begin{subequations}
  \label{eq:p^A-p^B=beta}
  \bgeqn 
  p^A_N-p^B_N &=&\frac{\gamma\beta_N}{\tilde{r}_N},\\
p^A_i-p^B_i &=&\frac{\gamma}{\tilde{r}_i}\left[\beta_i-\sum_{s=i+1}^N(p^A_s-p^B_s)\right],\ \text{for}\; i=N-1,\cdots,1.
  \edeqn 
  \end{subequations}
 Note that $\gamma$ is not specified. To identify it, let
  \begin{subequations}\label{eq:L=p^A-p^B}
  \bgeqn 
  L_N &=&\frac{\beta_N}{\tilde{r}_N},\\
L_i &=&\frac{1}{\tilde{r}_i}\left[\beta_i-\sum_{s=i+1}^NL_s\right],\ \text{for}\; i=N-1,\cdots,1.
  \edeqn 
  \end{subequations}
Let ${\bm L}=(L_1,\cdots,L_N)$. 
We consider a specific solution to the system of equations \eqref{eq:p^A-p^B=beta}. We define
${\bm p}^A-{\bm p}^B = {\gamma}{\bm L}$.
Then we can use \eqref{eq:p^A-p^B=beta}
and the fact that $p^A_i, p_i^B\in[0,1]$ for $i=0,1\cdots,N$, and 
$\sum_{m=0}^N p_m^A=\sum_{m=0}^N p_m^B=1$ to
identify $\gamma=\frac{1}{\sum^{N}_{m=1}|L_m|}$. 
Let
\bgeqn \label{eq:L+_L-}
\left\{L^+_i:=\max\{L_i,0\}\right\}_{i=1}^N, \quad \text{and} 
\quad \left\{L^-_i:=\max\{-L_i,0\} \right\}_{i=1}^N.
\edeqn
We define
\bgeqn \label{eq:p^A}
p^A_i=\frac{L^+_i}{\sum_{m=1}^N|L_m|},\ \text{for} \; i=1,\ldots, N,
\quad \text{and} \quad
p^A_0=1-\sum_{i=1}^Np^A_i=\frac{\sum_{i=1}^NL^-_i}{\sum_{m=1}^N|L_m|},
\edeqn 
and 
\bgeqn \label{eq:p^B}
p^B_i=\frac{L^-_i}{\sum_{m=1}^N|L_m|},\;\text{for}\;  i=1,\ldots, 
N, 
\quad \text{and} \quad p^B_0=1-\sum_{i=1}^N p^B_i=\frac{\sum_{i=1}^NL^+_i}{\sum_{m=1}^N|L_m|}.
\edeqn 
It is easy to verify that $\p^A,\p^B$ defined in \eqref{eq:p^A} and \eqref{eq:p^B} and the $\gamma$ satisfy \eqref{eq:p^A-p^B=beta}. Then we obtain a solution $A=(\rr,\p^A)$ and $B=(\rr,\p^B)$. 
The proof is complete. \hfill $\Box$

\subsection{Proof of Proposition~\ref{prop:G_convex}}
For any
$(\rr^A,\p^A),\;(\rr^B,\p^B)\in\hat{\mathcal D}$,
define
$$
\tilde r_i^A:=\frac{r_i^A-x_i}{x_{i+1}-x_i}\in(0,1],
\qquad
\tilde r_i^B:=\frac{r_i^B-x_i}{x_{i+1}-x_i}\in(0,1],
\qquad i=1,\ldots,N.
$$
According to the definition of $\G$ (see \eqref{eq:G}),
$$
\G(\rr^A,\p^A)
=
{\left(
p_i^A\tilde r_i^A+
1-\sum_{m=0}^i p_m^A
\right)_{i=1}^N} \quad
\text{and}\quad
\G(\rr^B,\p^B)
=
{\left(
p_i^B\tilde r_i^B+
1-\sum_{m=0}^i p_m^B
\right)_{i=1}^N}. 
$$
Therefore, for any $\lambda\in[0,1]$,
\begin{equation}\label{eq:lambda_G_c}
\lambda\G(\rr^A,\p^A)+(1-\lambda)\G(\rr^B,\p^B)
=
{\left(
\lambda p_i^A\tilde r_i^A+(1-\lambda)p_i^B\tilde r_i^B
+
1-\sum_{m=0}^i\big(\lambda p_m^A+(1-\lambda)p_m^B\big)
\right)_{i=1}^N}.
\end{equation}

We now show that the right-hand side of \eqref{eq:lambda_G_c} belongs to
$\G(\hat{\mathcal D})$. Define
$
\hat p_i:=\lambda p_i^A+(1-\lambda)p_i^B,\; i=0,1,\ldots,N.
$
Let $\hat r_0:=x_1$ and, for $i=1,\ldots,N$, define
$$
\hat r_i:=
\begin{cases}
\dfrac{x_i+x_{i+1}}{2}, & \text{if } p_i^A=p_i^B=0,\\[1.2ex]
x_i+
\dfrac{\lambda p_i^A\tilde r_i^A+(1-\lambda)p_i^B\tilde r_i^B}{\hat p_i}
(x_{i+1}-x_i), & \text{otherwise}.
\end{cases}
$$
Denote $\hat\rr=(\hat r_0,\ldots,\hat r_N)$ and
$\hat\p=(\hat p_0,\ldots,\hat p_N)$. Then according to the definition of $\G$, we obtain
\bgeqn\label{eq:G_hat}
\G(\hat\rr,\hat\p)
=
{\left(
\hat p_i\frac{\hat r_i-x_i}{x_{i+1}-x_i}
+
1-\sum_{m=0}^i\hat p_m
\right)_{i=1}^N}.
\edeqn
Substitute $\hat\rr$ and $\hat\p$ into \eqref{eq:G_hat}, the result is exactly the right-hand side of \eqref{eq:lambda_G_c}. Hence
$
\G(\hat\rr,\hat\p)
=
\lambda\G(\rr^A,\p^A)+(1-\lambda)\G(\rr^B,\p^B).
$
It remains to verify that $(\hat\rr,\hat\p)\in\hat{\mathcal D}$. First,
$\hat p_i\in[0,1]$ for all $i=0,\ldots,N$, and
$
\sum_{i=0}^N\hat p_i
=
\lambda\sum_{i=0}^N p_i^A
+
(1-\lambda)\sum_{i=0}^N p_i^B
=
1.
$
Thus $\hat\p$ is a well-defined probability vector.

Next, we verify that $\hat\rr$ satisfies \eqref{eq:so_ex_con}. If
$p_i^A=p_i^B=0$, then $\hat r_i=(x_i+x_{i+1})/2\in(x_i,x_{i+1}]$.
Otherwise, $\hat p_i>0$. Since
$\tilde r_i^A,\tilde r_i^B\in(0,1]$, we have
$$
0<
\frac{\lambda p_i^A\tilde r_i^A+(1-\lambda)p_i^B\tilde r_i^B}{\hat p_i}
\leqslant
\frac{\lambda p_i^A+(1-\lambda)p_i^B}{\hat p_i}
=1.
$$
Therefore $\hat r_i\in(x_i,x_{i+1}]$ for every $i=1,\ldots,N$.
Combining the above arguments gives $(\hat\rr,\hat\p)\in\hat{\mathcal D}$.

Above all, $
\lambda\G(\rr^A,\p^A)+(1-\lambda)\G(\rr^B,\p^B)
\in \G(\hat{\mathcal D}),
$
hence $\G(\hat{\mathcal D})$ is convex.
The proof is complete. \hfill $\Box$

\subsection{Proof of Theorem~\ref{thm:conv_univariate_metric}}
Fix $k\geqslant 0$ and $i\in\{1,\ldots,N-1\}$, and take any $u\in\mathcal U_{\mathcal{X}}^{k,i}$.
Then there exists $\vv\in\mathcal P^{k,i}$ such that $u=u(\cdot;\vv)=(\RR \vv)^\top g(\cdot)$. 
By Definition~\ref{def:kan_kolmo},
$$
\dd_I(u^*,u)=(\overline{x}-\underline{x})\sup_{z\in[\overline{x},\underline{x}]}|u^*(z)-u(z)|=\delta\|u^*-u\|_\infty,
$$
where $\delta:=\overline{x}-\underline{x}$ and $\dd_K(u^*,u)\leqslant\dd_I(u^*,u)$.

\underline{Bounding $\|u^*-u\|_\infty$.}
For a fixed piecewise-linear utility function $u(x)$ with increment vector $\vv$, let $\Delta \vv:=\vv^*-\vv$.
For any $x\in(x_\ell,x_{\ell+1}]$ with $\ell\in\{1,\ldots,N\}$, we have
$$
u(x)=\sum_{j=1}^{\ell-1} v_j + \theta v_\ell,
\quad
\text{where}\;\theta:=\frac{x-x_\ell}{x_{\ell+1}-x_\ell}\in[0,1].
$$
For $\ell\leqslant N-1$, this expression involves only $(v_1,\ldots,v_{\ell})$; for $\ell=N$, the last increment
is determined by the normalization $\bm e^\top (\RR \vv)=1$, hence the same bound below holds.
Therefore, in all cases,
$$
|u^*(x)-u(x)|
\leqslant \sum_{j=1}^{N-1} |\Delta v_j|
= \|\Delta \vv\|_1.
$$
Hence $\|u^*-u\|_\infty \leqslant \|\vv^*-\vv\|_1 \leqslant \sqrt{N-1}\,\|\vv^*-\vv\|_2$.

Algorithm~\ref{alg:VCPM_PLU} performs the same MEB-centered coordinate-wise orthogonal cuts in the
$(N-1)$-dimensional increment space as Algorithm~1 does in the multivariate linear case.
Therefore, the radius contraction argument used in Theorem~\ref{theo:convergence_linear} applies verbatim to $\{\mathcal P^{k,i}\}$:
for every $\vv\in\mathcal P^{k,i}$,
$$
\sqrt{N-1}\,\|\vv^*-\vv\|_2 \leqslant 2\sqrt{N-1}\,r(\mathcal B^k)\leqslant  2 \sqrt{N-1} r(\mathcal B^0)\Big(\sqrt{\tfrac{N-2}{N-1}}\Big)^{\,k}.
$$
This completes the proof of \eqref{eq:conv_metric_bound}.
 \hfill $\Box$

\subsection{Proof of Proposition~\ref{lem:pla_error}}
Error bound on piecewise-linear approximation of a Lipschitz-continuous utility function is well documented, see e.g.~\cite{guo2024utility}. However, here we derive the bound in a slightly different manner which leads to a sharper bound. 
Consider any interval 
of two consecutive breakpoints, e.g., 
$[x_i^k,x_{i+1}^k]$. Let 
$z\in[x_i^k,x_{i+1}^k]$ be any point in the interval. Then we can write $z$ as
$z=(1-\lambda)x_i^k+\lambda x_{i+1}^k$ 
for some $\lambda\in[0,1]$. 
Since $u$ is nondecreasing and $L$-Lipschitz, and 
$(1-\lambda)a-\lambda b\leqslant \max\{(1-\lambda)a, \lambda b\}
$ for any $a,b\in\R_+$,
then the quantity 
$
 \big\vert u(z)-(\Tk u)(z)\big\vert
 =\big\vert u(z)-\big((1-\lambda)u(x_i^k)+\lambda u(x_{i+1}^k)\big)\big\vert
$
is bounded by
$
 \max\big\{(1-\lambda)|u(z)-u(x_i^k)|,\ \lambda|u(x_{i+1}^k)-u(z)|\big\}
 \leqslant \max\big\{(1-\lambda)L(z-x_i^k),\ \lambda L(x_{i+1}^k-z)\big\}.
$
Observe that
 $z-x_i^k=\lambda(x_{i+1}^k-x_i^k)$ and $
x_{i+1}^k-z=(1-\lambda)(x_{i+1}^k-x_i^k)$. Thus,
$
\max\big\{(1-\lambda)L(z-x_i^k),\ \lambda L(x_{i+1}^k-z)\big\}
=
L\lambda(1-\lambda)(x_{i+1}^k-x_i^k).
$
Since
 \(\lambda(1-\lambda)\leqslant 1/4\) for all \(\lambda\in[0,1]\),
 then the above quantity is bounded by
$
\frac{L(x_{i+1}^k-x_i^k)}{4}.
$
Taking the maximum over all intervals yields \eqref{eq:PLA-Lip}.
The proof is complete. \hfill $\Box$

\subsection{Proof of Lemma~\ref{prop:lift_projection}}
Part (i). For any $\bm w\in \PP^{k+1,0}$, 
let $v_i=w_i$ for $i\neq j_k$ and $v_{j_k}=w_{j_k}^-+w_{j_k}^+$. 
Then $\vv\in\Pkp$. This shows $\pi_k(\PP^{k+1,0})\subseteq \Pkp$.
Conversely, let
$\vv\in\Pkp$ and choose $a=v_{j_k}/2$. According to the interval-wise Lipschitz bounds on $\XX^k$, we
have $0\leqslant v_{j_k}\leqslant Lh_k$, and therefore
$$
 0\leqslant a=\frac{v_{j_k}}{2}\leqslant \frac{Lh_k}{2},
 \qquad
 0\leqslant v_{j_k}-a=\frac{v_{j_k}}{2}\leqslant \frac{Lh_k}{2}.
$$
Hence the corresponding split vector belongs to $\PP^{k+1,0}$.
Thus
$\pi_k(\PP^{k+1,0})\supseteq\Pkp$.

Part (ii). Let $u^*\in\Ukp$. Denote $x_{\text{new}}^k=\frac{x_{j_k+1}^k+x_{j_k}^k}{2}$ the new point added into $\XX^{k+1}$. Let
$
 a^*:=u^*(x_{\text{new}}^k)-u^*(x_{j_k}^k),
 \
 b^*:=u^*(x_{j_k+1}^k)-u^*(x_{\text{new}}^k).
$
Then $a^*,b^*\geqslant 0$, $a^*+b^*=\big(\Pi_k(u^*)\big)_{j_k}$. By the $L$-Lipschitzness,
$
 a^*\leqslant \frac{Lh_k}{2}, \ b^*\leqslant \frac{Lh_k}{2}.
$
Therefore $\Pi_{k+1}(u^*)\in \PP^{k+1,0}$, i.e., $u^*\in\UU^{k+1}$. 

Part (iii). If $u\in\UU^{k+1}$, then $\Pi_{k+1}(u)\in \PP^{k+1,0}$. By part (i),
$
 \Pi_k(u)=\pi_k(\Pi_{k+1}(u))\in\Pkp.
$
Hence $u\in\Ukp$.
The proof is complete. \hfill $\Box$

\subsection{Proof of Theorem~\ref{thm:lift_bound}}
We first prove 
$
 r(\B^{k+1,0})
 \leqslant
 \sqrt{\Big(r(\B^{k,+})\Big)^2+\eta_k^2},
$
where $\B^{k+1,0}$ is the MEB of $ \PP^{k+1,0} $ and $\B^{k,+}$ is the MEB of $\PP^{k,+}$.
Let $\cc=(c_1,\ldots,c_{\Nk-1})$ be the center of $\B^{k,+}$. By the definition of MEB,  
$\sup_{\vv\in\Pkp}\|\vv-\cc\|_2\leqslant r(\B^{k,+})$. We proceed with the rest of the proof in two cases according to the location of the 
widest interval (see \eqref{eq-jk}).

\underline{Case 1.} $j_k\neq N_k$.
In this case,
$$
\PP^{k+1,0}	:=
\left\{
({v}_1,\dots,{v}_{j_k-1},\,a,\,{v}_{j_k}-a,\,{v}_{j_k+1},\dots,{v}_{N_k-1})^\top
	\in\mathbb{R}^{N_k}
	\;\middle|\;
\begin{array}{c}
\
	0\leqslant a\leqslant v_{j_k}, \\
a\leqslant \frac{L\Delta_{k}}{2},\
	v_{j_k}-a\leqslant\frac{L\Delta_{k}}{2}
\end{array}
\right\}.
$$
We consider a fixed point
$
 \bm y:=(c_1,\ldots,c_{j_k-1},c_{j_k}/2,c_{j_k}/2,c_{j_k+1},\ldots,c_{\Nk-1})^\top \in\PP^{k+1,0}\subset\R^{\Nk}.
$
For any $\bm{z} \in\PP^{k+1,0}$,
there exists 
$\vv\in\Pkp$ and 
$
a\in [0,
v_{j_k}]$ 
such that
$
 \bm{z}=(v_1,\ldots,v_{j_k-1},a,v_{j_k}-a,v_{j_k+1},\ldots,v_{\Nk-1})^\top$. 
Observe that
\begin{align*}
 \|\bm{z}-\bm y\|_2^2
 &=\sum_{i\ne j_k}(v_i-c_i)^2+
 \left(a-\frac{c_{j_k}}{2}\right)^2+
 \left(v_{j_k}-a-\frac{c_{j_k}}{2}\right)^2 \\
 &=\sum_{i\ne j_k}(v_i-c_i)^2+
 a^2+(v_{j_k}-a)^2-v_{j_k}c_{j_k}\\
 &=\sum_{i\ne j_k}(v_i-c_i)^2+
 \frac{(v_{j_k}-c_{j_k})^2}{2}+2\left(\frac{v_{j_k}}{2}-a\right)^2.
\end{align*}
Since $0\leqslant a\leqslant v_{j_k}$, $|\frac{v_{j_k}}{2}-a|\leqslant \frac{v_{j_k}}{2}$, then  $2(\frac{v_{j_k}}{2}-a)^2\leqslant \frac{v_{j_k}^2}{2}$, and consequently we obtain
$$
 \|\bm z-\bm y\|_2^2
 \leqslant
 \sum_{i\ne j_k}(v_i-c_i)^2+(v_{j_k}-c_{j_k})^2+\frac{v_{j_k}^2}{2}
 =\|\vv-\cc\|_2^2+\frac{v_{j_k}^2}{2}.
$$
By Lemma~\ref{prop:lift_projection}(i), $\pi_k(\PP^{k+1,0})=\Pkp$. 
Taking the supremum over $\bm z\in\PP^{k+1,0}$ on the lhs of the inequality above
and accordingly 
$\vv$ over $\PP^{k,+}$ at the rhs of the inequality, we obtain
\bgeqn 
\sup_{\bm z\in{\PP}^{k+1,0}}
 \|\bm z-\bm y\|_2^2
 \leqslant \sup_{\vv\in\PP^{k,+}}\|\vv-\cc\|_2^2+\sup_{j_k\in\{1,\cdots,N_k-1\}}\frac{v_{j_k}^2}{2}
\leqslant \Big(r(\B^{k,+})\Big)^2+\frac{\eta_k^2}{2}.
\edeqn 
On the other hand, 
since $\bm y\in{\PP}^{k+1,0}$, then 
$\sup_{\bm z\in{\PP}^{k+1,0}}
 \|\bm z-\bm y\|_2\geqslant r({\B}^{k+1,0})$.
 Therefore,
\bgeqn 
 r(\B^{k+1,0})
 \leqslant \sup_{\bm z\in{\PP}^{k+1,0}}
 \|\bm z-\bm y\|_2
 \leqslant
 \sqrt{\Big(r(\B^{k,+})\Big)^2+\frac{\eta_k^2}{2}}\leqslant r(\B^{k,+})+\frac{\eta_k}{\sqrt 2}.
\edeqn 
The last inequality comes from the fact that $\sqrt{a^2+b^2}\leqslant a+b$ when
$a,b\geqslant 0$.

\underline{Case 2}. $j_k= N_k$.
In this case, 
$$
\PP^{k+1,0}	:=
\left\{
({v}_1,\dots,{v}_{N_k-1},a)^\top
	\in\mathbb{R}^{N_k}
	\;\middle|\;
\begin{array}{c}
\
	0\leqslant a\leqslant v_{N_k},\;
a\leqslant \frac{L(x_{N_k+1}-x_{N_k})}{2}
\end{array}
\right\}.
$$
We consider a fixed point
$
 \bm y:=(c_1,\ldots,,c_{\Nk-1},c_{N_k}/2)\in\PP^{k+1,0}\subseteq\R^{\Nk}
$, where $c_{N_k}=1-\sum_{i=1}^{N_k-1}c_i$.
For any $
 \bm{z}\in\PP^{k+1,0}$, there exists $\vv\in\PP^{k,+}$ and $a\in[0,v_{N_k}]$ such that
 $
 \bm z=(v_1,\ldots,v_{\Nk-1},a)\in\PP^{k+1,0}$, 
 where $v_{N_k}=1-\sum_{i=1}^{N_k-1}v_i$. 
Hence,
\begin{align*}
 \|\bm{z}-\bm y\|_2^2
 &=\sum_{i=1}^{N_k-1}(v_i-c_i)^2+
 \left(a-\frac{c_{N_k}}{2}\right)^2=
 { \|\vv-\cc\|_2^2
 }
 +\left(a-\frac{c_{N_k}}{2}\right)^2.
\end{align*}
Since $0\leqslant a\leqslant v_{N_k}\leqslant\eta_k$ and $0\leqslant c_{N_k}\leqslant\eta_k$ due to the fact that $\cc\in \PP^{k,+}$, we have $|a-\frac{c_{N_k}}{2}|\leqslant \eta_k.$
Therefore
$
 \|\bm z-\bm y\|_2^2
 \leqslant
 \|\vv-\cc\|_2^2+\eta_k^2
 \leqslant
 \|\vv-\cc\|_2^2+
 \eta_k^2.
$
Taking the supremum over $\bm z\in\PP^{k+1,0}$ gives
$$
\sup_{\bm z\in{\PP}^{k+1,0}}
 \|\bm z-\bm y\|_2^2
 \leqslant \sup_{\vv\in\PP^{k,+}}\|\vv-\cc\|_2^2+
 \eta_k^2
 \leqslant \Big(r(\B^{k,+})\Big)^2+\eta_k^2.
$$
Since $\bm y\in{\PP}^{k+1,0}$, 
we have 
$$
 r(\B^{k+1,0})
 \leqslant \sup_{\bm z\in{\PP}^{k+1,0}}
 \|\bm z-\bm y\|_2\leqslant
 \sqrt{\Big(r(\B^{k,+})\Big)^2+\eta_k^2}
 \leqslant r(\B^{k,+})+\eta_k.
$$
Above all, we always have
$
r(\B^{k+1,0})\leqslant r(\B^{k,+})+\eta_k.
$
Finally, since $\Pkp$ retains the interval-wise Lipschitz bounds on $\XX^k$,
$$
 0\leqslant \eta_{k}\leqslant L\max_{j_k\in{1,\cdots,N_k}}(x_{j_k+1}^k-x_{j_k}^k)=L\hk,
$$
which completes the proof.
\hfill $\Box$

\subsection{Proof of Proposition~\ref{prop:h_k}}
For $k=0$, $h_0 = \max_{1\leqslant i\leqslant N_0}(x_{i+1}^0-x_i^0)$.
The interval $[x_1^0,x_{N_0+1}^0]=[\underline{x},\overline{x}]$ contains 
 $N_0$ mutually exclusive sub-intervals.  
Therefore, it takes  at most $N_0$ rounds of updating to 
reduce the mesh size $h_0$ to less or equal to $h_0/2$ (when the breakpoints are evenly spread), i.e., $h_{N_0}\leqslant h_0/2$.
The number of intervals in the worst case (after $N_0$ rounds of updating) will increase from $N_0$
to $2N_0$.
Repeating the same argument,
it will take $2N_0$ rounds in the worst case to 
reduce the mesh size to less or equal to 
$\frac{h_0}{2^2}$, i.e., $h_{3N_0}\leqslant h_0/4$, 
in which case 
the total number of breakpoints is $4N_0$.
Continuing the process, we deduce by induction that
$h_{(2^r-1)N_0}
= h_{(N_0+2N_0+\cdots+2^{r-1}N_0)} 
\leqslant\frac{1}{2^{r}}h_0,\; r=1,2,\ldots
$
Let $K_r:=(2^r-1)N_0$. 
Then  we can rewrite the inequality above 
as $h_{K_r}\leqslant\frac{N_0h_0}{{K_r}+N_0}$.
Finally, for any $k\geqslant 0$, let
$
r:=\left\lfloor \log_2\!\left(1+\frac{k}{N_0}\right)\right\rfloor.
$
Then 
we have 
$
(2^r-1)N_0\leqslant k.
$
Since $h_k$ is non-increasing in $k$, then
$
h_k\leqslant h_{(2^r-1)N_0}\leqslant \frac{h_0}{2^r}.
$
Thus, for the specified $r$, we have 
$
h_k
\leqslant
\frac{h_0}{2^r}
\leqslant
\frac{2h_0}{1+k/N_0}
=
\frac{2N_0h_0}{k+N_0}
$ for any $k\ge0$.
\qed 


\subsection{Proof of Theorem~\ref{thm:utility_space_convergence}}
For every $u\in \UU^k$, by the proof of Theorem~\ref{thm:conv_univariate_metric}, we have \bgeqn\label{eq:d_k+d_I}
\dd_K(u^*,u)\ \leqslant\ \dd_I(u^*,u)=(\overline{x}-\underline{x})\|u-u^*\|_\infty.
\edeqn
In what follows, we derive 
a bound for $\sup_{u\in {\cal U}^k}\|u-u^*\|_\infty$.
Fix $u\in \UU^k$ and let $T_k u$ be its piecewise-linear approximation defined as in \eqref{eq:T_k-PLA}.
        By  triangle inequality,
\bgeqn 
\label{eq:thm-proof-triangle}
\|u-u^*\|_\infty\leqslant	\|u-T_k u\|_\infty	+	\|T_k u-T_k u^*\|_\infty	+	\|u^*-T_k u^*\|_\infty .
\edeqn 
Proposition~\ref{lem:pla_error} guarantees that 
$\|u-T_k u\|_\infty\leqslant \frac{Lh_k}{4}$ and $\|u^*-T_k u^*\|_\infty\leqslant \frac{Lh_k}{4}.$ Here we prove
\begin{eqnarray}\label{eq:Tk_bound}
\|T_k u-T_k u^*\|_\infty
\leqslant 
\|\Pi_k(u)-\Pi_k(u^*)\|_1
\leqslant 	\sqrt{N_k-1}\,\|\Pi_k(u)-\Pi_k(u^*)\|_2.
\end{eqnarray}
The second inequality is well known, so we only prove the first.
Let
$\Delta:=\Pi_k(u)-\Pi_k(u^*)=(\Delta_1,\ldots,\Delta_{N_k-1})^\top\in\mathbb{R}^{N_k-1}.$
By the definition of recovery mapping (see \eqref{map:recover}), 
$\RR\Pi_k(u)-\RR\Pi_k(u^*)=
\Bigl(\Delta_1,\ldots,\Delta_{N_k-1},-\sum_{i=1}^{N_k-1}\Delta_i\Bigr)^\top
\in\mathbb{R}^{N_k}.
$
For any fixed $x\in[\underline{x},\overline{x}]$, there exists $j\in\{1,\ldots,N_k\}$ such that
$x\in[x_j^k,x_{j+1}^k]$.
Let
$
\theta:=\frac{x-x_j^k}{x_{j+1}^k-x_j^k}\in[0,1].
$
We go through two cases.

\underline{Case 1}. $j\neq N_k$. By the definition of $\g^k(\cdot)$,
$
\g^k(x)=(1,\ldots,1,\theta,0,\ldots,0)^\top\in\R^{N_k}
$. Therefore,
\begin{eqnarray*}
&\Big\vert(T_k u-T_k u^*)(x)\Big\vert
&=\Bigg\vert\Bigl(\RR\Pi_k(u)-\RR\Pi_k(u^*)\Bigr)^\top \g^k(x)\Bigg\vert
=\Bigg\vert\sum_{i=1}^{j-1}\Delta_i+\theta \Delta_j\Bigg\vert\\
&&\leqslant
\sum_{i=1}^{j-1}|\Delta_i|+\theta |\Delta_j|
\leqslant
\sum_{i=1}^{N_k-1}|\Delta_i|
=
\|\Delta\|_1.
\end{eqnarray*}

\underline{Case 2}. $j=N_k$.
Since $x$ is in the last interval,
$
\g^k(x)=(1,\ldots,1,\theta)^\top\in \R^{N_k},
$
then
\begin{eqnarray*}
&\Big\vert(T_k u-T_k u^*)(x)\Big\vert
&=
\Bigg\vert\sum_{i=1}^{N_k-1}\Delta_i
+\theta\Bigl(-\sum_{i=1}^{N_k-1}\Delta_i\Bigr)\Bigg\vert 
=(1-\theta)\Bigg\vert\sum_{i=1}^{N_k-1}\Delta_i\Bigg\vert\\
& &\leqslant
(1-\theta)\sum_{i=1}^{N_k-1}|\Delta_i|
\leqslant
\|\Delta\|_1.
\end{eqnarray*}
Summarizing the two cases and 
since $x\in[\underline{x},\overline{x}]$ is taken arbitrarily, we obtain the first inequality in \eqref{eq:Tk_bound}. 
A combination of \eqref{eq:Tk_bound} with \eqref{eq:thm-proof-triangle} leads to 
\bgeqn 
\label{eq:Thm5.2-intermediate-eqstmate}
\|u-u^*\|_\infty\leqslant \frac{Lh_k}{4}+\sqrt{N_k-1}\,\|\Pi_k(u)-\Pi_k(u^*)\|_2+\frac{Lh_k}{4}.
\edeqn 
Since $\Pi_k(u),\Pi_k(u^*)\in \PP^{k,0}$, then 
$	\|\Pi_k(u)-\Pi_k(u^*)\|_2	\leqslant	\diam(\PP^{k,0})	\leqslant	2\,r(\B^{k,0}).$
Substituting 
the inequality into \eqref{eq:Thm5.2-intermediate-eqstmate}, we obtain  
$\sup_{u\in {\cal U}^k}\|u-u^*\|_\infty
\leqslant	\frac{L h_k}{2}+2\sqrt{N_k-1}\,r(\B^{k,0})$.

Next, we show  the last inequality of \eqref{eq:one-step_delta_u}. By the recursive radius relationship established in Theorem \ref{thm:lift_bound} and the stopping rule of the inner CPM major iterations, we conclude 
$	
r(\B^{k,0})	\leqslant	\epsilon\,r(\B^{k-1,0})+Lh_{k-1}
,	 \text{for}\; k\geqslant 1.$
By induction from this inequality, we obtain 
$
 r(\B^{k,0})	\leqslant	\epsilon^k r(\B^{0,0})+L
 \sum_{i=0}^{k-1}\epsilon^i h_{k-1-i}.
$
Substituting this bound into the second inequality in \eqref{eq:one-step_delta_u} and using inequality  \eqref{eq:d_k+d_I}
immediately give rise to the second inequality of \eqref{eq:one-step_delta_u}.
	
Finally, we prove 
$\sup_{u\in\UU^k}\|u-u^*\|_\infty \to 0$ as $k\to\infty$.
By Proposition~\ref{prop:h_k}, $h_k={O}(\frac{1}{k})$. By 
the updating rule of breakpoint set (see Step 2 in Algorithm~\ref{alg:VCPM_CU}), $N_k=N_0+k$.
Hence, $\sqrt{N_k-1}\hk=O(\frac{1}{\sqrt{k}})\rightarrow0$ and $\sqrt{N_k-1}\epsilon^kr(\B^{0,0})\to 0,$ since $\epsilon^k$ decays to zero exponentially.
	Therefore,
$$	\sup_{u\in\UU^k}\|u-u^*\|_\infty
\overset{\eqref{eq:one-step_delta_u}}{\leqslant} \frac{Lh_k}{2}	+	2\sqrt{N_k-1}
		\left[\epsilon^k r(\B^{0,0})+L
        \sum_{i=0}^{k-1}\epsilon^i h_{k-1-i}
		\right]=O(\frac{1}{\sqrt{k}}) 
\to 0,	\qquad \text{as}\; k\to\infty.$$
Since the updates of  breakpoints in Algorithm~\ref{alg:VCPM_CU} do not change the endpoints $\underline{x}$ and $\overline{x}$, $\delta$ remains a fixed constant. Therefore, $\delta \sup_{u\in\UU^k}\|u-u^*\|_\infty\rightarrow 0$ as $k\rightarrow \infty$. This completes the proof.
 \hfill $\Box$

\section{Comparison between 
MEB center in CPM and  
AC in POLY}
The polyhedral method by~\cite{toubia2004polyhedral}~employs the analytic center as an estimate of the decision maker’s  partworth vector under the collected preference information. By contrast, CPM adopts the MEB center as an estimate.
The advantage of using the MEB center lies in its robustness to extreme
locations of the unknown true partworth vector. Unlike the analytic center,
which is determined by the slacks to the defining halfspaces, the MEB center
directly controls the worst-case Euclidean distance to feasible points in the
polyhedron. Thus, when $\vv^*$ lies at or near a vertex, the analytic center may
be far from $\vv^*$, whereas the MEB center remains minimax-optimal over the
entire ambiguity set. The next proposition formalizes this property, which states that of all candidate points in $\mathbb{R}^{N-1}$, the MEB center is the point at which the maximum distance 
from the point to  any point in
$\mathcal{P}$ is minimized.


\begin{proposition}[Worst-case optimality of the MEB center]\label{prop:wc-MEB}
Let $\mathcal{P} \subset \mathbb{R}^{N-1}$ be a bounded polyhedron, and  $\mathcal{B}(\cc,r)$ be the MEB of ${\cal P}$. 
Then
$
\cc = \mathop{\arg\min}_{\vv' \in \mathbb{R}^{N-1}} \max_{\vv \in \mathcal{P}} \|\vv-\vv'\|_2.
$
\end{proposition}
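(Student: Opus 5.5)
The plan is to read the statement directly off Definition~\ref{def:circle} and then add a uniqueness argument, since the claim writes $\cc$ as \emph{the} $\arg\min$. Set $f(\vv'):=\max_{\vv\in\PP}\|\vv-\vv'\|_2$. Because $\PP$ is compact (bounded and closed), this maximum is attained and finite. The function $f$ is a pointwise maximum of the convex maps $\vv'\mapsto\|\vv-\vv'\|_2$, so it is convex. It is also $1$-Lipschitz, and $f(\vv')\to\infty$ as $\|\vv'\|_2\to\infty$. Hence a minimizer exists.

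Next, identify the minimizers of $f$ with MEB centers. For any $\vv'$ and $r$, the pair $(\vv',r)$ is feasible in the MEB program of Definition~\ref{def:circle} exactly when $r\geqslant f(\vv')$. Minimizing $r$ over the feasible pairs is therefore the same as minimizing $f$ over $\vv'$. It follows that $\cc$ minimizes $f$ and that $r=f(\cc)=\min f$.

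The remaining step is uniqueness of the minimizer, which is the only real content here. Suppose $\cc_1\neq\cc_2$ both attain $\min f=r$, and put $\bm m=(\cc_1+\cc_2)/2$. For any $\vv\in\PP$, the parallelogram identity gives
\[
\|\vv-\bm m\|_2^2=\tfrac12\|\vv-\cc_1\|_2^2+\tfrac12\|\vv-\cc_2\|_2^2-\tfrac14\|\cc_1-\cc_2\|_2^2\leqslant r^2-\tfrac14\|\cc_1-\cc_2\|_2^2 .
\]
Taking the maximum over $\vv\in\PP$ yields $f(\bm m)<r$, which contradicts minimality. So the minimizer is unique, and it coincides with $\cc$. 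The point I would be careful about is this: strict convexity of $\|\cdot\|_2^2$ is applied uniformly across $\vv$, and uniqueness follows only because the bound above is uniform in $\vv$, combined with attainment of the maximum over the compact set $\PP$. Once that is in place, the proof is complete.
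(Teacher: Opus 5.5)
Your proof is correct. Its core step is the paper's argument in cleaner form: you observe that $(\vv',r)$ is feasible for the MEB program exactly when $r\geqslant f(\vv')$, so the MEB program is just the minimization of $f$. The paper reaches the same point by contradiction, noting that a point $\cc'$ with $\max_{\vv\in\PP}\|\vv-\cc'\|_2<r$ would give a smaller enclosing ball.

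Where you go beyond the paper is in existence and, more importantly, uniqueness. The paper's contradiction only rules out points with \emph{strictly} smaller maximal distance, i.e., it shows $\cc\in\mathop{\arg\min} f$. It never excludes a second minimizer, so the equality $\cc=\mathop{\arg\min} f$ is tacitly assumed, as is the definite article in ``the'' MEB of Definition~\ref{def:circle}. Your parallelogram-identity step supplies exactly this missing uniqueness.

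One small remark: attainment of the maximum is not needed in that step. The bound $r^2-\tfrac14\|\cc_1-\cc_2\|_2^2$ is uniform in $\vv$, so it bounds the supremum directly.
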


\begin{proof}
By Definition \ref{def:circle}, the MEB of $\PP$ is the 
ball 
containing 
$\PP$ with minimal radius:
$$
(\cc,r) = \mathop{\arg\min}_{\vv'\in \mathbb{R}^{N-1}, r'\geqslant 0} r' \quad \text{s.t.} \quad \|\vv-\vv'\|_2 \leqslant r', \forall \vv \in \PP.
$$
By construction, $r = \max_{\vv \in \PP} \|\vv - \cc\|_2$. Suppose for the sake of a contradiction that there exists a point $\cc' \neq \cc$ such that 
$\max_{\vv \in \PP} \|\vv - \cc'\|_2 < r$. Then ${\cal B}(c', \max_{\vv\in \PP}\|\vv-\cc'\|_2)$ would be a smaller enclosing ball than $\B(\cc,r)$, contradicting the minimality of $\B(\cc,r)$. Hence, no point can have a smaller maximal distance to $\PP$ than $\cc$.
\end{proof}
Table \ref{tb:Maxdis} 
gives a comparative list of the distances for the case when ${\cal P}$  is an irregular polygon with 3 to 8 edges. And Figure~\ref{fig:polygons} illustrates the property in Proposition~\ref{prop:wc-MEB} geometrically.
\begin{figure}[htbp]
	\begin{subfigure}[t]{0.32\textwidth}
		\centering		\includegraphics[width=0.8\textwidth]{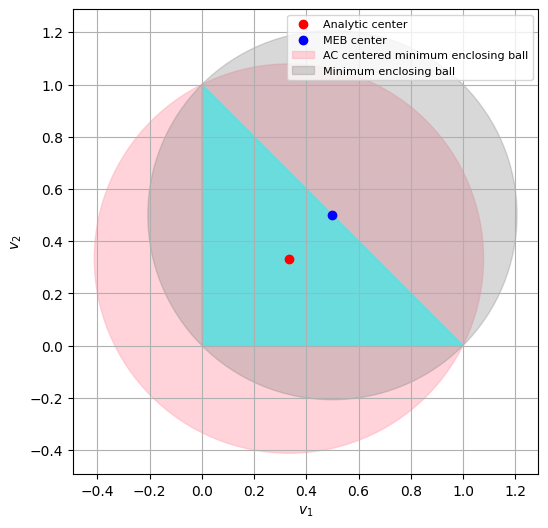}
          \caption{Triangle}
	\end{subfigure}
	\begin{subfigure}[t]{0.32\textwidth}
		\centering
		\includegraphics[width=0.75\textwidth]{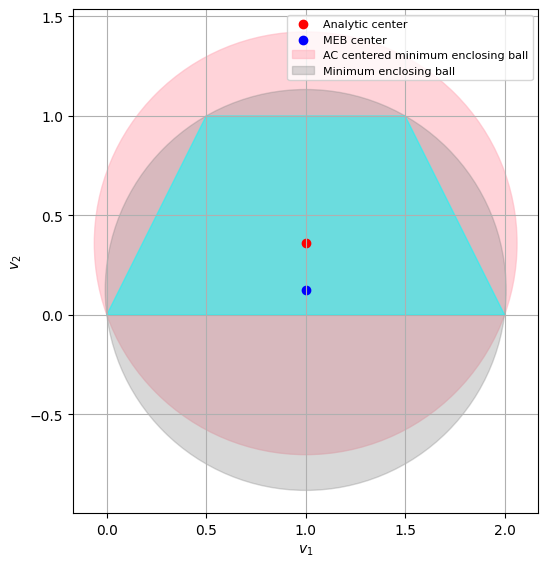}
          \caption{Tetragon}
	\end{subfigure}
    \begin{subfigure}[t]{0.32\textwidth}
		\centering
\includegraphics[width=0.8\textwidth]{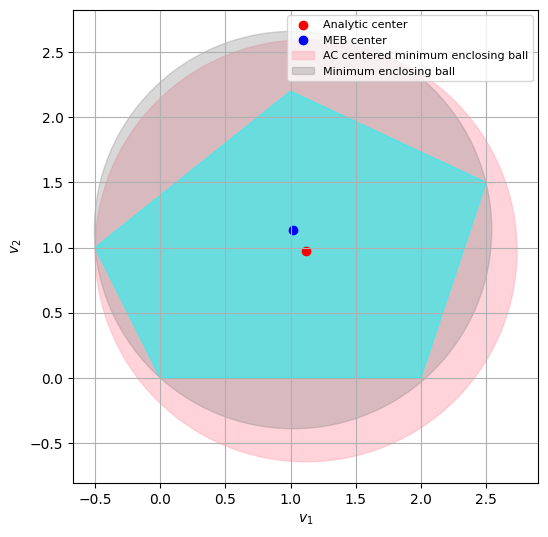}
    \caption{Pentagon}
	\end{subfigure}
    \begin{subfigure}[t]{0.32\textwidth}
		\centering		\includegraphics[width=0.8\textwidth]{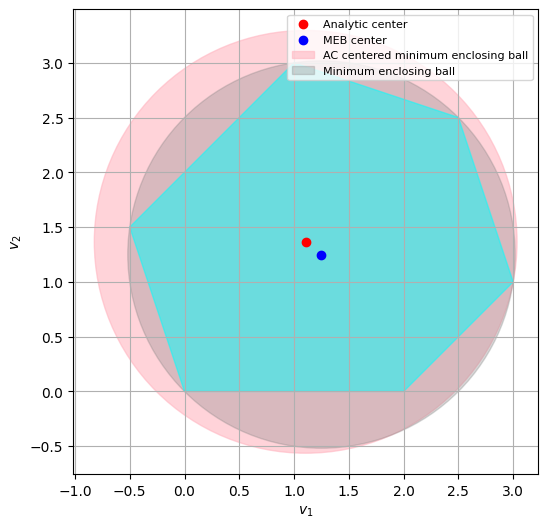}
    \caption{Hexagon}
	\end{subfigure}
	\begin{subfigure}[t]{0.32\textwidth}
		\centering
\includegraphics[width=0.8\textwidth]{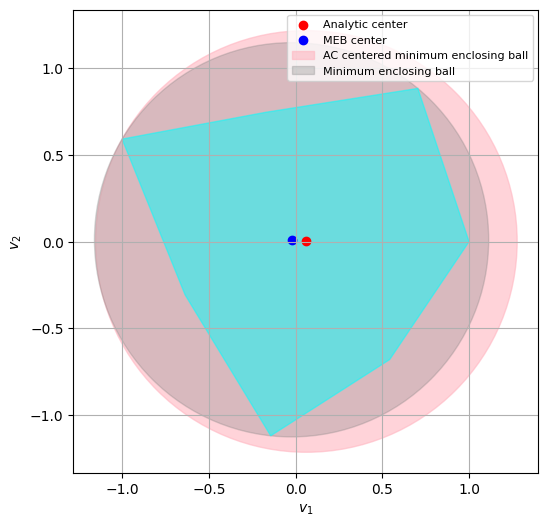}
    \caption{Heptagon}
	\end{subfigure}
    \begin{subfigure}[t]{0.32\textwidth}
		\centering
\includegraphics[width=0.8\textwidth]{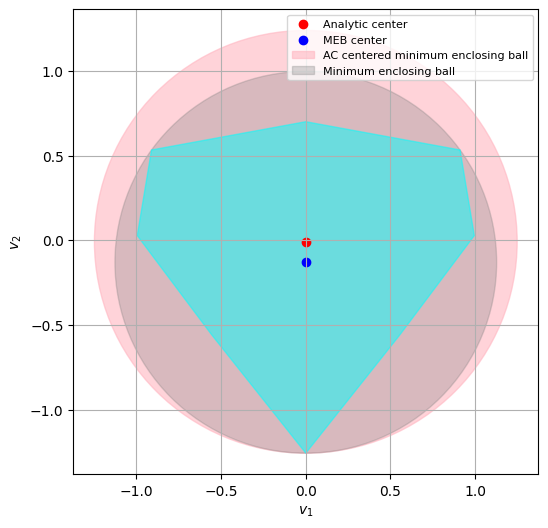}
	 \caption{Octagon}
          \end{subfigure}
          \caption{Analytic centers and MEB centers of some examples of polygons\label{fig:polygons}}
\end{figure}

\begin{table}[htbp]
		\setlength\tabcolsep{6pt}
		\renewcommand{\arraystretch}{1.2}
		\centering
		\caption{Maximum distance between estimators and extreme points} 
			\label{tb:Maxdis} 
		\begin{tabularx}{0.6\textwidth}{p{3cm}<{\centering} |p{3cm}<{\centering}p{3cm}<{\centering} }
			\hline
		Number of edges& AC&MEB center \\
		\hline
		3&0.745 &0.707\\
		4&1.063 &1.008\\
		5&1.619 &1.525\\
		6&1.929 &1.768\\
            7&1.216 &1.137\\
            8&1.248 &1.127\\
			\hline
		\end{tabularx}\label{tab:example1}
	\end{table}

\section{Functional distance}
\label{sec-app-distance}

{\color{black} 
In this paper, we need to quantify not only the size
of polyhedra in the space of partworth vectors and the increment vectors of piecewise-linear utility functions but also
the size of the ambiguity sets corresponding to these polyhedra. To this end, we need to introduce the distance between two utility functions.
Here we recall 
two well-known distances of utility functions in the literature of preference robust optimization.


\begin{definition}[Kantorovich distance $\dd_K$ and scaled Kolmogorov distance $\dd_I$\label{def:kan_kolmo}]
Let 
$$
\begin{aligned}
   &{\cal U}:=\left\{u:[a,b]\rightarrow[0,1] \mid u(a)=0, \ u(b)=1,\
u \text{ is 
nondecreasing
and right-continuous}
\right\},\\
&\mathscr{G}_{L}:=\Big\{\varphi:[a,b]\to\R \ \big|\ \varphi \ \text{is Lipschitz with modulus bounded by}\ 1\Big\},
\quad \text{and}\quad
\mathscr{G}_{I}:=\Big\{\mathbbm{1}_{(a,z]}(\cdot)\ \big|\ z\in[a,b]\Big\}. 
\end{aligned}
$$
For any $u,v\in {\cal U}$, the Kantorovich distance between $u$ and $v$ is defined by 
\bgeqn
\label{eq:uv-Kant-dist}
\dd_K(u,v):= \sup_{\varphi\in \mathscr{G}_{L}} \left[\int_{a}^{b} \varphi(z)du(z)    -\int_{a}^{b} \varphi(z)dv(z)\right],
\edeqn
and the uniform Kolmogorov metric between $u$ and $v$ is defined by
\begin{subequations}
\bgeqn
\dd_{\mathrm{Kol}}(u,v) 
&:=& \sup_{\varphi\in \mathscr{G}_{I}} 
\left|\int_{a}^{b} \varphi(t)d u(t)
-\int_{a}^{b} \varphi(t)d v(t)\right|\\
&=& \sup_{z\in[a,b]}
\left|
\int_{a}^{b} \mathbbm{1}_{(a,z]}(t)d u(t)
-\int_{a}^{b} \mathbbm{1}_{(a,z]}(t)d v(t)
\right| \\
&=& \sup_{z\in[a,b]}
\left|[u(z)-u(a)]-[v(z)-v(a)]\right|\\
&=& \sup_{z\in[a,b]}|u(z)-v(z)|= \|u-v\|_\infty .
\label{eq:uv-Kolm-dist}
\edeqn
\end{subequations}
We further define the scaled Kolmogorov metric by
$
\dd_I(u,v):=(b-a)\dd_{\mathrm{Kol}}(u,v)
=(b-a)\|u-v\|_\infty .
$
\end{definition}
Since $u,v$ are normalized utility functions, then
$
\int_{a}^{b} \varphi(z)d (u - v)(z)=
\int_{a}^{b} (\varphi(z)-\varphi(a))d (u - v)(z).
$
We can replace $\mathscr{G}_L$ with 
$
\mathscr{G}_{L}:=\left\{\varphi: [a,b]\to\mathbb{R} \mid \varphi(a)=0,\ \varphi\text{ is Lipschitz-continuous with modulus bounded by 1}
\right\}.
$
Hence, for any $z\in[a,b]$, $\varphi(z)$ can be represented as 
$\varphi(z)=\int_{a}^z h(t)dt$, where $h(t)$ is a measurable function and $|h(t)|\leqslant1$.
Then 
\begin{eqnarray}
  &\dd_K(u,v)&=\Bigg|\int_{a}^{b} \varphi(z)d (u - v)(z)\Bigg|=\Bigg|\int_{a}^{b} \Big(\int_{a}^z h(t)dt\Big)d (u - v)(z)\Bigg|\nonumber\\
    &&=\Bigg|\int_{a}^{b} h(t)\Big(\int_t^{b} d (u - v)(z)\Big)dt\Bigg|
    =\Bigg|\int_{a}^{b} h(t)\Big[\Big(u(b)-v(b)\Big)-\Big(u(t)-v(t)\Big)\Big]dt\Bigg|\nonumber\\
    &&=\Bigg|\int_{a}^{b} -h(t)\Big(u(t)-v(t)\Big)dt\Bigg|\leqslant \int_{a}^{b} \Big|h(t)\Big|\cdot\Big|u(t)-v(t)\Big|dt\nonumber\\
    &&\leqslant (b-a)\sup_{t\in[a,b]}\Big|u(t)-v(t)\Big|
    =(b-a)\|u - v\|_\infty=\dd_I(u,v).
    \label{eq:d_kant<=d_koml}
\end{eqnarray}

\begin{definition}[Diameter of a set of utility functions and 
the worst-case error of utility estimate]
Let $\mathcal P$ 
be a polyhedron 
in the space of 
increment vectors (see Section~\ref{sec:univariate})
and $\mathcal U_{\PP}$ be the corresponding set of piecewise-linear utility functions (see \eqref{eq:PLA_utility_org}), i.e.,
\begin{equation*}
    \mathcal U_{\PP}:=\{u\mid u(x)=(\RR\vv)^\top\g(x) ,\;\vv\in\mathcal P\}.
\end{equation*}
We call
$
\sup_{u,v\in\mathcal U_{\PP}}\dd_K(u,v)$ and $ \sup_{u,v\in\mathcal U_{\PP}}\dd_I(u,v)
$
the diameter of ${\cal U_{\PP}}$ under the Kantorovich distance and scaled Kolmogorov distance respectively.
Let $u^*\in \mathcal U_{\mathcal P}$ denote the true utility function. We refer to
$
\sup_{u\in\mathcal U_{\mathcal P}}\dd_K(u^*,u)
\quad\text{and}\quad
\sup_{u\in\mathcal U_{\mathcal P}}\dd_I(u^*,u)
$
as the worst-case utility estimation errors induced by $\mathcal U_{\mathcal P}$
under the Kantorovich and scaled Kolmogorov distances, respectively.
\end{definition}

{\color{black}\section{Effect of Concavity Constraints on Convergence}
\label{sec:convergence_concavity}

In Section~\ref{subsec:continuous_example}, we considered the elicitation of a concave utility function by the adaptive-breakpoint CPM. In that experiment, however, the prior information that the true utility function is concave was not explicitly incorporated into the ambiguity set. In this section, we investigate whether imposing concavity constraints improves the convergence behavior of the algorithm. We use the same experimental setting as in Section~\ref{subsec:continuous_example}. The true utility function is
$$
u^*(x)= \frac{1}{1-e^{-24}}(1-e^{-6x}), \quad x\in[0,4],
$$
which is increasing and concave on $[0,4]$. We consider the same two initial breakpoint sets:
$
\mathcal X^0_{(1)}=\{0,0.6,2,4\}$ and $\mathcal X^0_{(2)}=\{0,3,3.5,4\}$.
For both tests, the initial ambiguity set is modified by incorporating the slope-monotonicity constraints induced by concavity:
$$
\mathcal P^{0,0}
=
\left\{
\vv\in\mathbb R^2
\;\middle|\;
0\leqslant v_i\leqslant \min\{1,L(x_{i+1}-x_i)\},\;
\;
\frac{v_{2}}{x^0_{3}-x^0_2}
\leqslant
\frac{v_{1}}{x^0_2-x^0_{1}},
\;\vv^\top \bm e\leqslant 1,
\right\}.
$$
The last two inequalities enforces the nonincreasingness of the slopes of the piecewise-linear utility approximation, and therefore guarantees concavity. After each breakpoint update, the same type of concavity constraints is imposed on the lifted polyhedron $\PP^{k+1,0}$:
$$
\frac{v_{i}}{x^k_{i+1}-x^k_i}
\leqslant
\frac{v_{i-1}}{x^k_i-x^k_{i-1}},
\qquad i=2,\ldots,N_k-1.
$$
Thus, the ambiguity set is restricted throughout the elicitation process to contain piecewise-linear utility functions with the concavity.

Tables~\ref{tab:concavity_example_1} and~\ref{tab:concavity_example_2} report the results for the initial breakpoint sets
$\mathcal X^0_{(1)}=\{0,0.6,2,4\}$ and
$\mathcal X^0_{(2)}=\{0,3,3.5,4\}$, respectively. We compare Tables~\ref{tab:concavity_example_1}--\ref{tab:concavity_example_2} with
Tables~\ref{tab:continuous_example_1}--\ref{tab:continuous_example_2}, where concavity constraints are not imposed. The main observations are as follows.

\begin{table}[!htbp]
	\setlength\tabcolsep{6pt}
	\renewcommand{\arraystretch}{1.2}
	\centering
	\caption{Performance
    of 
    Algorithm~\ref{alg:VCPM_CU}
    in Test 1 with concavity constraints.}
	\label{tab:concavity_example_1}
	\begin{tabularx}{\textwidth}{
        p{1.5cm}<{\centering} 
        p{1cm}<{\centering}
        p{1cm}<{\centering}
        p{1.5cm}<{\centering}
        p{1.5cm}<{\centering}
        p{0.6cm}<{\centering}
        p{0.6cm}<{\centering}
        p{0.8cm}<{\centering}
        p{2.3cm}<{\centering}
        p{2.3cm}<{\centering} }
		\hline
round $k$ &  $\Nk-1$   & $\hk$ & $r(\B^{k,0})$ 
& $r(\B^{k,+})$  &$M_k$ & 
Cuts&N.q &$\max\dd_K(u^*,u)$ &$\max\dd_I(u^*,u)$
 \\
		\hline	      
    0     & 2     & 2     & 0.4950  & 0.2475  & 1     & 2     & 2     & 0.5053  & 2.0511  \\
    1     & 3     & 1.4   & 0.2476  & 0.0619  & 2     & 6     & 8     & 0.2263  & 1.5965  \\
    2     & 4     & 1     & 0.0619  & 0.0310  & 1     & 4     & 12    & 0.1624  & 1.5294  \\
    3     & 5     & 1     & 0.0310  & 0.0077  & 2     & 10    & 22    & 0.1552  & 1.5133  \\
    4     & 6     & 0.7   & 0.0077  & 0.0019  & 2     & 12    & 34    & 0.1516  & 1.5053  \\
    5     & 7     & 0.7   & 0.0097  & 0.0025  & 2     & 14    & 48    & 0.1481  & 1.5053  \\
    6     & 8     & 0.6   & 0.0025  & 0.0006  & 2     & 16    & 64    & 0.1479  & 1.5053  \\
    7     & 9     & 0.5   & 0.3300  & 0.1648  & 1     & 9     & 73    & 0.0777  & 0.9348  \\
    8     & 10    & 0.5   & 0.1698  & 0.0849  & 1     & 10    & 83    & 0.0777  & 0.9348  \\
		\hline
	\end{tabularx}
\end{table}
\begin{table}[!htbp]
	\setlength\tabcolsep{6pt}
	\renewcommand{\arraystretch}{1.2}
	\centering
	\caption{Performance
    of 
    Algorithm~\ref{alg:VCPM_CU}
    in Test 2 with concavity constraints.}
	\label{tab:concavity_example_2}
	\begin{tabularx}{\textwidth}{
        p{1.5cm}<{\centering} 
        p{1cm}<{\centering}
        p{1cm}<{\centering}
        p{1.5cm}<{\centering}
        p{1.5cm}<{\centering}
        p{0.6cm}<{\centering}
        p{0.6cm}<{\centering}
        p{0.8cm}<{\centering}
        p{2.3cm}<{\centering}
        p{2.3cm}<{\centering} }
		\hline
round $k$ &  $\Nk-1$   & $\hk$ & $r(\B^{k,0})$ 
& $r(\B^{k,+})$  &$M_k$ & 
Cuts&N.q &$\max\dd_K(u^*,u)$ &$\max\dd_I(u^*,u)$
 \\
		\hline	      
  0     & 2     & 3     & 0.1398  & 0.0388  & 2     & 4     & 4     & 1.4722  & 3.1806  \\
    1     & 3     & 1.5   & 0.3550  & 0.0895  & 2     & 6     & 10    & 0.7970  & 2.7126  \\
    2     & 4     & 1.5   & 0.3619  & 0.0948  & 2     & 8     & 18    & 0.3577  & 1.9815  \\
    3     & 5     & 0.75  & 0.0948  & 0.0248  & 2     & 10    & 28    & 0.2452  & 1.8265  \\
    4     & 6     & 0.75  & 0.3542  & 0.1769  & 1     & 6     & 34    & 0.1443  & 1.2353  \\
    5     & 7     & 0.75  & 0.1769  & 0.0724  & 1     & 7     & 41    & 0.0823  & 0.9889  \\
    6     & 8     & 0.75  & 0.0724  & 0.0184  & 2     & 16    & 57    & 0.0675  & 0.9455  \\
    7     & 9     & 0.5   & 0.0184  & 0.0049  & 2     & 18    & 75    & 0.0663  & 0.9455  \\
		\hline
	\end{tabularx}
\end{table}

\begin{itemize}
    \item For the initial breakpoint set
    $\mathcal X^0_{(1)}=\{0,0.6,2,4\}$, imposing concavity constraints does not lead to a uniformly faster reduction of the worst-case Kantorovich distance
    $
    \max_{u\in\mathcal U^k}\dd_K(u^*,u).
    $
    This is consistent with Figure~\ref{fig:Test1}(a), which shows that, after the first round, the worst-case piecewise-linear approximations induced by the polyhedra are already concave. Hence, in this test, concavity is effectively recovered by the algorithm itself during the elicitation process. Nevertheless, the concavity constraints reduce the number of queries required to reach a comparable level of worst-case Kantorovich error. This suggests that, although concavity does not significantly change the asymptotic trend in this case, it improves the efficiency of the elicitation process.

    \item For the initial breakpoint set
    $\mathcal X^0_{(2)}=\{0,3,3.5,4\}$, the effect of concavity constraints is more pronounced in the early rounds. In particular, the values of
    $\max_{u\in\mathcal U^k}\dd_K(u^*,u)$
    in Table~\ref{tab:concavity_example_2} are smaller than those in Table~\ref{tab:continuous_example_2} during the first 5 rounds. This improvement occurs because, under this initial breakpoint set, the worst-case piecewise-linear approximation generated without concavity constraints is not concave even after the first round; see Figure~\ref{fig:Test2}(a). As the elicitation process proceeds, however, the worst-case approximations generated without explicit concavity constraints also become concave. Consequently, the marginal benefit of imposing concavity constraints gradually diminishes. However, similar to Test 1, the total number of queries needed to reach a comparable worst-case Kantorovich distance is reduced when concavity constraints are imposed.
\end{itemize}

Overall, the numerical results indicate that prior concavity information may improve the query efficiency of the adaptive-breakpoint CPM. However, as more queries are collected and the breakpoint set is refined, the adaptive-breakpoint CPM is able to recover the concavity structure naturally. Therefore, the concavity constraints do not alter the final convergence behavior of the algorithm.
}

\section{Tractable Formulations of the Robust Pricing Problem\label{app:robust-pricing}}

In this section, we derive a tractable linear-programming reformulation of the robust pricing problem~\eqref{eq:robust_pricing}.

The robust pricing problem is solved for fixed DM $n$ and round index $k$, to simplify the notation, we rewrite the problem as following:
\begin{subequations}
\label{eq:sim_robust_pricing}
\begin{align}
\max_{p\in[0,1]} \quad
    & p
\label{eq:sim_robust_pricing_obj}
\\
\mathrm{s.t.}\quad
    &(\RR{\vv})^\top
      \bigl(p,(\x^{\text{new}})^\top\bigr)^\top
      \geqslant
      (\RR{\vv})^\top
      \bigl(p^{\text{base}},(\x^{\text{base}})^\top\bigr)^\top,
      \qquad \forall \vv\in \mathcal{P}.
\label{eq:sim_robust_pricing_con}
\end{align}
\end{subequations}
where
 $\PP:=
    \left\{
        \vv\in\mathbb R^3
        \,\middle|\,
        \bm{A}\vv\leqslant \bm{b},\;\bm{A}\in\mathbb R^{m\times 3},\; \bm{b}\in\mathbb R^{m}
    \right\}$.

Define the non-price attribute difference
$
    \Delta \x:=\x^{new}-\x^{base}
    =
    (\Delta x_1,\Delta x_2,\Delta x_3)^\top .
$
Then the robust constraint \eqref{eq:sim_robust_pricing_con} can be written as
$$
    (\RR\vv)^\top
    \bigl(p-p^{base},\Delta \x^\top\bigr)^\top
    \geqslant 0,
    \qquad \forall \vv\in \PP.
$$
Since $\RR\vv=(v_1,v_2,v_3,1-\bm e^\top \vv)^\top$, we obtain
$$
\begin{aligned}
    (\RR\vv)^\top
    \bigl(p-p^{base},\Delta \x^\top\bigr)^\top
    &=
    v_1(p-p^{base})
    +v_2\Delta x_1
    +v_3\Delta x_2
    +(1-\e^\top \vv)\Delta x_3                                      \\
    &=
    \Delta x_3
    +
    \vv^\top \bm z(p),
\end{aligned}
$$
where
$
    \bm z(p)
    :=
    \left(
        p-p^{base}-\Delta x_3,\,
        \Delta x_1-\Delta x_3,\,
        \Delta x_2-\Delta x_3
    \right)^\top .
$
Therefore, \eqref{eq:sim_robust_pricing_con}  is equivalent to the worst-case inequality
\begin{equation}\label{eq:app-worst-case}
    \min_{\vv\in \PP}
    \left\{
        \bm z(p)^\top \vv+\Delta x_3
    \right\}
    \geqslant 0. 
\end{equation}

We now dualize the inner minimization problem in \eqref{eq:app-worst-case}. Since
$\PP=\{\vv\in\mathbb R^3\mid \bm A\vv\leqslant \bm b\}$, \eqref{eq:app-worst-case} is a linear program
\begin{equation}
\begin{aligned}
    \min_{\vv\in\mathbb R^3}\quad
        & \bm z(p)^\top \vv+\Delta x_3, \quad
    \mathrm{s.t.}\quad
         \bm A \vv\leqslant \bm b.
\end{aligned}
\label{eq:app-inner-primal}   
\end{equation}
Its Lagrangian dual is
\begin{equation}
    \begin{aligned}
    \max_{\lambda\in\mathbb R_+^{m}}\quad
        & \Delta x_3-\bm b^\top \bm\lambda       \quad
    \mathrm{s.t.}\quad
         \bm A^\top \bm\lambda=-\bm z(p).
\end{aligned}
\label{eq:app-inner-dual}
\end{equation}
Because $\PP$ is nonempty and bounded,
the primal problem \eqref{eq:app-inner-primal} is feasible and has a finite optimal value. Hence strong
duality holds. It follows that \eqref{eq:app-worst-case} holds if and only if there exists
$\bm\lambda\in\mathbb R_+^{m}\) such that
$$
    \bm A^\top \bm\lambda=-\bm z(p),
    \quad
    \bm b^\top \lambda\leqslant \Delta x_3 .
$$

Consequently, the robust pricing problem \eqref{eq:sim_robust_pricing} admits the following equivalent
linear-programming reformulation:
\bgeqn\label{eq:app-robust-pricing-lp}
    &\max_{p,\bm \lambda}\quad
        & p                             \\
    &\mathrm{s.t.}\quad
        & \bm A^\top \bm\lambda=-\bm z(p),\;                           \bm b^\top \lambda\leqslant \Delta x_3,
        \;p\in[0,1],\quad\bm \lambda\in\R_+^m.\nonumber
\edeqn
Since $\bm z(p)$ is affine in $p$, problem \eqref{eq:app-robust-pricing-lp} is a linear program
with decision variables $p$ and $\bm\lambda$.

\section{The volume of projected ellipsoid}

\begin{remark}[Finite-dimensional volume may be misleading]
The volume of the breakpoint-value polyhedron (minimal-volume outer ellipsoid is the same) is not a reliable measure of the remaining ambiguity when the grid is refined. 
For the concept of volume, we refer to the proportion of the measure (e.g. area in two dimensions or volume in three dimensions) of the polyhedron over the measure of the unit box: $[0,1]\times\cdots\times[0,1]$.

We illustrate this point with a simple example. Consider the class of normalized nondecreasing $2$-Lipschitz utility functions on $[0,1]$:
\begin{equation}
\mathcal U_2
:=
\left\{
u:[0,1]\to[0,1]\mid
u(0)=0,\ u(1)=1,\ u \ {\rm is\ nondecreasing\ and}\ 2{\rm -Lipschitz}
\right\}.
\end{equation}
Let $X_n={0,1/n,\ldots,1}$ be a uniform grid, where $n$ is even, and denote
\begin{equation}
y_i=u(i/n),\quad i=0,\ldots,n .
\end{equation}
The breakpoint values satisfy $y_0=0$, $y_n=1$, and
\begin{equation}
0\leq y_i-y_{i-1}\leq \frac{2}{n},\quad i=1,\ldots,n .
\end{equation}
Thus the corresponding breakpoint-value polyhedron is
\begin{equation}
P_n
=
\left\{
(y_1,\ldots,y_{n-1})\in\mathbb R^{n-1}
\mid
y_0=0,\ y_n=1,\
0\leq y_i-y_{i-1}\leq \frac{2}{n},\ i=1,\ldots,n
\right\}.
\end{equation}
Let $v_i=y_i-y_{i-1}$. In the reduced coordinates $(v_1,\ldots,v_{n-1})$, the set $P_n$ is contained in the box $[0,2/n]^{n-1}$. Hence
\begin{equation}
{\rm Vol}_{n-1}(P_n)
\leq
\left(\frac{2}{n}\right)^{n-1}
\to 0
\quad {\rm as}\quad n\to\infty .
\end{equation}
However, this vanishing volume does not imply that the induced utility ambiguity becomes small in a functional metric. Indeed, define two breakpoint-value vectors by
\begin{equation}
y_i^L=\min\left\{\frac{2i}{n},1\right\},
\qquad
y_i^R=\max\left\{\frac{2i}{n}-1,0\right\},
\quad i=0,\ldots,n .
\end{equation}
Both vectors belong to $P_n$. Their piecewise-linear interpolations are
\begin{equation}
u_n^L(x)=\min\{2x,1\},
\qquad
u_n^R(x)=\max\{2x-1,0\}.
\end{equation}
At $x=1/2$, we have $u_n^L(1/2)=1$ and $u_n^R(1/2)=0$. Therefore,
\begin{equation}
||u_n^L-u_n^R||_\infty=1,
\qquad \forall n .
\end{equation}
Consequently, although the finite-dimensional volume ${\rm Vol}_{n-1}(P_n)$ converges to zero, the Kolmogorov diameter of the induced utility class does not shrink. This example shows that volumes of breakpoint-value polyhedra are not directly comparable across different grids and may vanish purely because of increasing dimension and changing coordinate scale. For continuous utility functions, it is therefore more appropriate to measure ambiguity by functional distances, such as the Kolmogorov or Kantorovich distance, rather than by the Euclidean volume of the finite-dimensional breakpoint polyhedron.
\end{remark}

\end{document}